\newtheorem{theorem}{Theorem}
\newtheorem{corollary}{Corollary}[section]
\newtheorem{lemma}{Lemma}[section]
\newtheorem{proposition}{Proposition}[section]
\newtheorem{remark}{Remark}[section]
\newcommand{\dd}{\mathrm{d}}
\numberwithin{equation}{section}
\begin{document}
	\title[Pseudo-relativistic Hartree equation]{Asymptotic behavior of ground states of generalized pseudo-relativistic Hartree equation}
	
	\author{P. Belchior}
	\address{P. Belchior - Pontifícia Universidade Católica, Departamento de Matemática, Av Dom José Gaspar, 500 - Coração Eucarístico, 30535-901  Belo Horizonte - MG, Brazil}
	\email{pedrobelchior18@gmail.com}
	\author{H. Bueno}
	\address{H. Bueno and G. A. Pereira- Departmento de Matem\'atica, Universidade Federal de  Minas Gerais, 31270-901 - Belo Horizonte - MG, Brazil}
	\email{hamilton.pb@gmail.com and  gilbertoapereira@yahoo.com.br }
	\author{O. H. Miyagaki}
	\address{O. H. Miyagaki - Departmento de Matem\'atica, Universidade Federal de Juiz de Fora, 36036-330 - Juiz de Fora - MG, Brazil}
	\email{ohmiyagaki@gmail.com}
	\author{G. A. Pereira}

	\subjclass{35J20, 35Q55, 35B48,  35R11} \keywords{Variational methods, exponential decay, fractional laplacian, Hartree equations}
	\thanks{O. H. Miyagaki is the corresponding author and has received research grants from CNPq/Brazil  304015/2014-8 and INCTMAT/CNPQ/Brazil; P. Belchior was partially supported by CAPES/Brazil: G. A. Pereira received research grants by PNPD/CAPES/Brazil}
	\date{}

\begin{abstract}
With appropriate hypotheses on the nonlinearity $f$, we prove the existence of a ground state solution $u$ for the problem
\[\sqrt{-\Delta+m^2}\, u+Vu=\left(W*F(u)\right)f(u)\ \ \text{in }\ \mathbb{R}^{N},\]
where $V$ is a bounded potential, not necessarily continuous, and $F$ the primitive of $f$. We also show that any of this problem is a classical solution. Furthermore, we prove that the ground state solution has exponential decay.
\end{abstract}
\maketitle

\section{Introduction}\label{intro}

In this paper we consider a generalized pseudo-relativistic Hartree equation
\begin{equation}\label{original}
\sqrt{-\Delta+m^2}\, u+Vu=\left(W*F(u)\right)f(u)\ \ \text{in }\ \mathbb{R}^{N},\end{equation}
where $N\geq 2$, $F(t)=\int_0^t f(s)\dd s$, assuming that the nonlinearity $f$ is a $C^1$ function, non-negative in $[0,\infty)$, that satisfies
\begin{enumerate}
\item [($f_1$)] $\displaystyle\lim_{t\to 0}\frac{|f(t)|}{t}=0$;
\item [($f_2$)] $\displaystyle\lim_{t\to \infty}\frac{f(t)}{t^{\theta-1}}=0$ for some $2<\theta<2^{\#}=\frac{2N}{N-1}$;
\item [($f_3$)] $\displaystyle\frac{f(t)}{t}$ is increasing for all $t>0$.
\end{enumerate}
We also postulate
\begin{enumerate}
	\item [($V1$)] $V$ is continuous and satisfies $V(y)+V_0\geq 0$ for every $y\in\mathbb{R}^{N}$ and some constant $V_0\in (0,m)$;
	\item [($V2$)] $V_\infty=\displaystyle\lim_{|y|\to\infty}V(y)>0$;
	\item [($V3$)] $V(y)\leq V_\infty$ for all $y\in\mathbb{R}^N$, $V(y)\neq V_\infty$;
\item [($W_h$)] $0\leq W=W_1+W_2\in L^r(\mathbb{R}^{N})+L^\infty(\mathbb{R}^{N})$ is radial, with $r>\frac{N}{N(2-\theta)+\theta}$.
\end{enumerate}

Therefore, we aim to generalize the results obtained by Coti Zelati and Nolasco \cite{ZelatiNolasco} and Cingolani and Secchi \cite{Cingolani}. In the last paper, the authors have studied the equation
\[\sqrt{-\Delta+m^2}\, u+Vu=\left(W*u^\theta\right)|u|^{\theta-2}u,\]
supposing, additionally to our hypotheses, that the potential $V$ is continuous and has a horizontal asymptote for $N\geq 3$. If $k\in \mathbb{N}$, our work covers the case
\[W(x)=\frac{|x|^k}{1+|x|^k},\]
while the hypothesis $W(y)\to 0$ when $|y|\to \infty$ is explicitly assumed in \cite[Section 7]{Cingolani}. Furthermore, the homogeneity of the equation is a key ingredient in the proofs presented. So, applying different methods, we generalize \cite{Cingolani}. A careful reading of our paper will also show that it generalizes \cite{ZelatiNolasco}.

The equation
\begin{equation}\label{original1}\left\{
\begin{array}{c}
i \partial_t u =\sqrt{-\Delta+m^2}+ G(u)\ \ \text{in }\ \mathbb{R}^{N},
\\
u(x,0)=\phi(x),\ \ x\in \mathbb{R}^{N} \end{array} \right.\end{equation}
where $N\geq 2$, $G$ is a nonlinearity of Hartree type , $m>0$ denotes the mass of bosons in units, was used to describe the dynamics of pseudo-relativistic boson stars in astrophysics. See \cite{Cho,Elgart,CSS,Lieb2} for more details. For the study of semiclassical analysis of the non-relativistic Hartree equations we would like to quote the papers \cite{CCS,Frohlich,Moroz,Wei} and the recent work \cite{Cingolani2} as well. For the Hartree equation without external potential $V$, we cite \cite{Lieb2} for radial ground state solution,  \cite{Lenzmann} for uniqueness and nondegeneracy of ground state solutions, and \cite{ZelatiNolasco,ZelatiNolasco2} for the existence of positive and radially symmetric solutions. In \cite{Melgaard} is treated some Hartree problem imposing that the external potential $V$ is radial, while in \cite{Cingolani} this condition is dropped.

By considering an extension problem from $\mathbb{R}^{N}$ to $\mathbb{R}^{N+1}_+$, an alternative definition of $\sqrt{-\Delta+m^2}$ is well-known (see \cite{ZelatiNolasco} or \cite{Caffarelli}), so that equation \eqref{original} can be written as
\begin{equation}\label{P}
\left\{\begin{aligned}
-\Delta u +m^2u&=0, &&\mbox{in} \ \mathbb{R}^{N+1}_+,\\
-\displaystyle\frac{\partial u}{\partial x}(0,y)&=-V(y)u(0,y)+\left(W(y)*F(u(0,y))\right)f(u(0,y)) &&\mbox{in} \ \mathbb{R}^{N}.\end{aligned}\right.
\end{equation}
We summarize our results:
\begin{theorem}\label{t1}
Suppose that conditions \textup{($f_1$)-($f_3$)}, \textup{($V_1$)} and \textup{($W_h$)} are valid. Then, problem \eqref{P} has a non-negative ground-state solution $w\in H^1(\mathbb{R}^{N+1}_+)$.
\end{theorem}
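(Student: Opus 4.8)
The plan is to realize the ground state variationally, as a least-energy critical point of the energy functional attached to \eqref{P}. On $X:=H^1(\mathbb{R}^{N+1}_+)$ I introduce the inner product
\[
\langle w,v\rangle:=\int_{\mathbb{R}^{N+1}_+}\bigl(\nabla w\cdot\nabla v+m^2wv\bigr)\,\dd x\,\dd y+\int_{\mathbb{R}^N}V(y)\,w(0,y)v(0,y)\,\dd y ,
\]
and, writing $u:=w(0,\cdot)$ for the trace, observe that the mass-gap inequality $\int_{\mathbb{R}^N}u^2\le\tfrac1m\int_{\mathbb{R}^{N+1}_+}(|\nabla w|^2+m^2w^2)$ together with $(V_1)$ (so $V\ge-V_0$ with $V_0<m$) and the boundedness of $V$ make $\|\cdot\|:=\langle\cdot,\cdot\rangle^{1/2}$ an equivalent norm on $X$. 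After extending $f$ by $0$ on $(-\infty,0]$ (still $C^1$, by $(f_1)$), so that $F\ge0$, I set
\[
\mathcal I(w):=\tfrac12\|w\|^2-\tfrac12\,\Phi(w),\qquad \Phi(w):=\int_{\mathbb{R}^N}\bigl(W*F(u)\bigr)(y)\,F(u(y))\,\dd y\ \ (\ge0),
\]
whose critical points are precisely the weak solutions of \eqref{P}. The first technical point is $\mathcal I\in C^1(X)$: this follows by combining the trace embeddings $X\hookrightarrow L^q(\mathbb{R}^N)$ for $q\in[2,2^\#]$, the growth bound $0\le F(t)\le\varepsilon t^2+C_\varepsilon t^\theta$ from $(f_1)$--$(f_2)$, Young's inequality for the convolution against $W_1\in L^r$, and the crude estimate $\bigl|\int(W_2*F(u))F(u)\bigr|\le\|W_2\|_\infty\|F(u)\|_{L^1}^2$ for $W_2\in L^\infty$; the restriction on $r$ in $(W_h)$ is exactly what keeps every exponent in the subcritical range $[2,2^\#)$, with a strict margin.

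Next I would establish the mountain-pass geometry. Since $\Phi(w)=o(\|w\|^2)$ as $\|w\|\to0$, there are $\rho,\alpha>0$ with $\mathcal I\ge\alpha$ on $\{\|w\|=\rho\}$. From $(f_3)$ one gets $2F(t)\le f(t)t$, hence $t\mapsto F(t)/t^2$ is nondecreasing and $F(\tau t)\ge\tau^2F(t)$ for $\tau\ge1$; thus $\Phi(\tau w)\ge\tau^4\Phi(w)$ and $\mathcal I(\tau w)\to-\infty$ along any ray with $\Phi(w)>0$ (such rays exist because $W,F\ge0$ and $W,F\not\equiv0$). The mountain-pass theorem then yields a Cerami sequence $(w_n)$ at the level $c:=\inf_{\gamma\in\Gamma}\max_{[0,1]}\mathcal I\circ\gamma>0$, which under $(f_1)$--$(f_3)$ equals the ground-state energy $\inf_{\mathcal N}\mathcal I$ over the Nehari manifold $\mathcal N:=\{w\ne0:\mathcal I'(w)w=0\}$. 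Using $2F(t)\le f(t)t$ and $W*F(u)\ge0$ again, $\mathcal I(w_n)-\tfrac14\mathcal I'(w_n)w_n\ge\tfrac14\|w_n\|^2$, so $(w_n)$ is bounded and, along a subsequence, $w_n\rightharpoonup w$ in $X$.

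The core of the argument is the compactness of $(w_n)$. If $w\ne0$, weak lower semicontinuity of the quadratic part together with a Brezis--Lieb-type splitting $\Phi(w_n)=\Phi(w)+\Phi(w_n-w)+o(1)$, with $\Phi(w_n-w)\ge0$, forces $\mathcal I'(w)=0$ and $\mathcal I(w)=c$, so $w$ is a ground state. To obtain $w\ne0$ I would first exclude vanishing: by Lions' lemma, if $\sup_{\xi\in\mathbb{R}^N}\int_{B_1(\xi)}u_n^2\to0$ then $u_n\to0$ in $L^q(\mathbb{R}^N)$ for all $q\in(2,2^\#)$, and since $F(t)\le\varepsilon t^2+C_\varepsilon t^\theta$ with $\varepsilon$ arbitrarily small this gives $\Phi(w_n)\to0$ and hence $\|w_n\|\to0$, contradicting $c>0$. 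Thus, after a translation $\xi_n$ in the $y$-variable, $w_n(\cdot,\cdot-\xi_n)\rightharpoonup\bar w\ne0$; if $(\xi_n)$ is bounded this already gives $w\ne0$, so the delicate case is $|\xi_n|\to\infty$. This is precisely where the method must depart from \cite{Cingolani}: there $W(y)\to0$ renders the Choquard term weakly continuous and rules out escaping sequences directly, whereas here $W_2\in L^\infty$ need not vanish, so only the Brezis--Lieb decomposition of $\Phi$ is available, and the non-autonomous $V$ has to be followed through a weak-$*$ limit $V(\cdot+\xi_n)\rightharpoonup\bar V$ (still with $\bar V+V_0\ge0$) in order to compare the escaping energy with $c$. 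I expect this exclusion of escape to infinity — carried out by a concentration-compactness analysis of the energy levels, using the sign conditions $W\ge0$, $F\ge0$ (which make $\Phi$ superadditive along well-separated profiles) and the decomposition $W=W_1+W_2$ — to be the main obstacle of the proof.

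Finally, once a ground state $w$ is in hand, non-negativity comes for free. Since $\bigl\|\,|w|\,\bigr\|_{H^1(\mathbb{R}^{N+1}_+)}=\|w\|$ — the gradient and $L^2$ terms are unchanged and $|u|^2=u^2$ leaves the $V$-term intact — while $\Phi(|w|)\ge\Phi(w)$ (because $F\ge0$ is nondecreasing and vanishes on $(-\infty,0]$), we get $\max_{t>0}\mathcal I(t|w|)\le\max_{t>0}\mathcal I(tw)=c$; scaling $|w|$ onto $\mathcal N$ at some $t_0>0$ then produces a non-negative minimizer $t_0|w|$ of $\mathcal I$ on $\mathcal N$, that is, a non-negative ground-state solution of \eqref{P}.
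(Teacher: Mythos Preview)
Your variational set-up, mountain-pass geometry, Nehari characterization, boundedness of the Palais--Smale sequence, and the non-negativity argument are all essentially as in the paper (the paper tests the equation with $w^{-}$ rather than replacing $w$ by $|w|$, but that is a cosmetic difference). The real divergence---and the genuine gap in your proposal---is in the compactness step, which you yourself flag as the main obstacle.

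Your plan for excluding escape to infinity is to pass to a weak-$*$ limit $V(\cdot+\xi_n)\rightharpoonup\bar V$ and then ``compare the escaping energy with $c$'' via superadditivity of $\Phi$. As stated this does not close: with only $(V_1)$ in hand, $\bar V$ need not be a constant (or even exist in a useful sense), so there is no tractable limit problem; and superadditivity of $\Phi$ along well-separated profiles only tells you that the escaping energy is non-negative, not that it exceeds $c$. The paper's proof actually uses the hypotheses $(V_2)$ and $(V_3)$---omitted from the statement of Theorem~\ref{t1} but assumed globally in the paper---in an essential way. Under $(V_2)$ the translated potentials converge to the \emph{constant} $V_\infty$, and the paper first solves the autonomous limit problem $(P_\infty)$ (Theorem~\ref{teo ground state}), obtaining a ground-state level $c_\infty$. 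Then Struwe's Splitting Lemma (Lemma~\ref{Struwe}) decomposes any Palais--Smale sequence for $I$ into a critical point of $I$ plus finitely many escaping profiles that are critical points of $I_\infty$, so that $c=I(u_0)+\sum_j I_\infty(u^j)\ge I(u_0)+k\,c_\infty$. Finally, the strict ordering $V\le V_\infty$, $V\not\equiv V_\infty$ from $(V_3)$ is used in Lemma~\ref{ccinfty} to prove $c<c_\infty$, which forces $k=0$ and hence strong convergence. Without the limit problem and the strict inequality $c<c_\infty$, your concentration-compactness sketch has no mechanism to rule out $k\ge 1$; that is the missing idea.
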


\begin{theorem}\label{classical}
	Assuming that hypotheses already stated are satisfied by $f$, $V$ and $W$, any solution $v$ of problem \eqref{P} satisfies
	\[v\in C^{1,\alpha}(\mathbb{R}^{N+1}_+)\cap C^2(\mathbb{R}^{N+1}_+)\]
	and therefore is a classical solution of \eqref{P}.
\end{theorem}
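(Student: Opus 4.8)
\proof The plan is a regularity bootstrap: interior regularity for the linear equation on $\mathbb{R}^{N+1}_+$ is immediate, so all the work concentrates at the boundary $\{x=0\}$, and the heart of the matter is to raise the integrability of the trace $u_0:=v(0,\cdot)$ up to $L^\infty(\mathbb{R}^N)$ and then move into Hölder spaces. For the interior, $v$ solves the constant-coefficient, uniformly elliptic equation $-\Delta v+m^2v=0$ in $\mathbb{R}^{N+1}_+$, so classical interior estimates give $v\in C^\infty(\mathbb{R}^{N+1}_+)$ and in particular $v\in C^2(\mathbb{R}^{N+1}_+)$; hence only a neighbourhood of $\{x=0\}$ needs attention.

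First I would control the trace. Since any solution lies in $H^1(\mathbb{R}^{N+1}_+)$, one has $u_0\in H^{1/2}(\mathbb{R}^N)\hookrightarrow L^{q}(\mathbb{R}^N)$ for $2\le q\le 2^{\#}$. From $(f_1)$ and $(f_2)$ come the growth bounds $|f(t)|\le C(|t|+|t|^{\theta-1})$ and $|F(t)|\le C(t^2+|t|^{\theta})$; then $(W_h)$ together with Young's and Hölder's inequalities---and here the borderline assumption $r>\frac{N}{N(2-\theta)+\theta}$ is precisely what is needed---shows that the right-hand side $h:=\bigl(W*F(u_0)\bigr)f(u_0)$ lies in $L^{q_0}(\mathbb{R}^N)$ for a suitable $q_0>1$. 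Testing the weak formulation of the Neumann problem in the extension with suitable powers of $u_0$ and using the trace inequality at each step (a Moser / Brezis--Kato iteration adapted to the half-space), one climbs to $u_0\in L^p(\mathbb{R}^N)$ for every $p<\infty$; subcriticality $\theta<2^{\#}$ ensures the iteration does not stall. Then $W*F(u_0)\in L^\infty(\mathbb{R}^N)$, so $g:=-Vu_0+\bigl(W*F(u_0)\bigr)f(u_0)\in L^p_{\mathrm{loc}}(\mathbb{R}^N)$ for all $p$, and $L^p$-estimates for the linear Neumann problem $-\Delta v+m^2v=0$ in $\mathbb{R}^{N+1}_+$, $-\partial_x v(0,\cdot)=g$ on $\mathbb{R}^N$, give $v\in W^{1,p}_{\mathrm{loc}}$ up to $\{x=0\}$; Morrey's embedding then yields $v\in C^{0,\alpha}_{\mathrm{loc}}(\overline{\mathbb{R}^{N+1}_+})$, and in particular $u_0\in L^\infty(\mathbb{R}^N)\cap C^{0,\alpha}(\mathbb{R}^N)$.

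Next I would pass to Schauder theory. Since $u_0\in L^2(\mathbb{R}^N)\cap L^\infty(\mathbb{R}^N)$ one has $F(u_0)\in L^1(\mathbb{R}^N)\cap L^\infty(\mathbb{R}^N)$ (for a ground state $u_0$ one moreover has decay at infinity), so splitting $W=W_1+W_2$ and estimating translates of $F(u_0)$ shows $W*F(u_0)\in C^{0,\alpha}_{\mathrm{loc}}(\mathbb{R}^N)\cap L^\infty(\mathbb{R}^N)$; since $f\in C^1$, also $f(u_0),F(u_0)\in C^{0,\alpha}_{\mathrm{loc}}(\mathbb{R}^N)$, and with $V$ continuous this gives $g\in C^{0,\alpha}_{\mathrm{loc}}(\mathbb{R}^N)$. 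The Schauder boundary estimate for the linear Neumann problem then upgrades $v$ to $v\in C^{1,\alpha}_{\mathrm{loc}}(\overline{\mathbb{R}^{N+1}_+})$, which together with the interior $C^\infty$ regularity proves $v\in C^{1,\alpha}(\mathbb{R}^{N+1}_+)\cap C^2(\mathbb{R}^{N+1}_+)$; at this stage $v$ is smooth enough that the normal derivative on $\{x=0\}$ exists pointwise, so the boundary identity in \eqref{P} holds in the classical sense.

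The step I expect to be the main obstacle is the $L^\infty$ bootstrap of the trace in this nonlocal framework: one has to push the Moser / Brezis--Kato iteration through the harmonic extension while carrying the convolution nonlinearity $(W*F(u_0))f(u_0)$ and only assuming $W\in L^r+L^\infty$ at the critical exponent $r>\frac{N}{N(2-\theta)+\theta}$. A secondary technical point is the Hölder continuity of the Riesz-type potential $W*F(u_0)$ when $W$ fails to be bounded, which is where the $L^1$-integrability of $F(u_0)$ (equivalently, the decay of a ground state) is used.
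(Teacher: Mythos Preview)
Your outline is correct and follows essentially the same route as the paper: a Moser/Brezis--Kato iteration on the trace (this is exactly what Propositions~\ref{p1}, \ref{p2} and \ref{t2} carry out in detail, using the same test functions $vv_T^{2\beta}$ and the same splitting of $W$ that you anticipate) to reach $\gamma(v)\in L^\infty$, followed by elliptic boundary regularity to pass to $C^{1,\alpha}$.

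The one methodological difference worth noting is at the final step. You appeal directly to $L^p$ and Schauder estimates for the Neumann problem on the half-space. The paper instead avoids boundary Schauder theory by a device borrowed from Coti Zelati--Nolasco: it introduces the primitive $\rho(x,y)=\int_0^x v(t,y)\,\dd t$, takes the odd extension to all of $\mathbb{R}^{N+1}$, and observes that $\rho$ solves the full-space equation $-\Delta\rho+m^2\rho=h$ with $h$ the (odd extension of the) Neumann datum. Standard interior elliptic regularity then gives $\rho\in C^{2,\alpha}$, hence $v=\partial_x\rho\in C^{1,\alpha}$ up to the boundary. This trick trades the Neumann boundary estimates you invoke for interior estimates on an auxiliary function, which is slightly more elementary and sidesteps any subtlety about Schauder theory with oblique boundary conditions; your route is equally valid but requires citing the Neumann Schauder estimates (e.g.\ Agmon--Douglis--Nirenberg or Lieberman). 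The Hölder continuity of $W*F(u_0)$ that you flag as a secondary issue is glossed over in the paper as well.
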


We also prove that the ground station solution has exponential decay:
\begin{theorem}\label{t3} Let $w$ be the ground state solution obtained in Theorem \ref{t1}. Then $w(x,y)>0$ in $[0,\infty)\times\mathbb{R}^{N}$ and, for any $\alpha\in (V_0,m)$ there exists $C>0$ such that
	\[0<w(x,y)\leq Ce^{-(m-\alpha)\sqrt{x^2+|y|^2}}e^{\alpha x}\]
	for any $(x,y)\in [0,\infty)\times\mathbb{R}^{N}$.
	In particular,
	\[0<w(0,y)\leq Ce^{-\delta|y|},\quad\forall\ y\in\mathbb{R}^{N},\]
	where $0<\delta<m-V_0$.
\end{theorem}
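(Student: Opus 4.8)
The plan is to work throughout with the extended problem \eqref{P}, exploiting the positivity provided by Theorem \ref{t1} and the classical regularity provided by Theorem \ref{classical}. Write $X=(x,y)$, $|X|=\sqrt{x^2+|y|^2}$, and let $w$ denote the ground state, so $w\ge 0$, $w\not\equiv 0$, $w$ is classical, $-\Delta w+m^2w=0$ in $\{x>0\}$, and $-\partial_x w(0,y)+V(y)w(0,y)=\bigl(W*F(w(0,\cdot))\bigr)(y)\,f(w(0,y))$ on $\mathbb{R}^N$. For the strict positivity I would argue as follows. Since $w\ge 0$, $w\not\equiv 0$, and $(-\Delta+m^2)w=0$, the strong maximum principle gives $w>0$ in $\{x>0\}$; note also that $w$ cannot be a positive constant, as that contradicts the equation. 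If $w(0,y_0)=0$ for some $y_0$, then $f(0)=0=F(0)$ by ($f_1$), so the boundary identity at $y_0$ reads $-\partial_x w(0,y_0)=0$; on the other hand $(0,y_0)$ is a global minimum of the nonconstant function $w$ and the half-space has the interior-ball property, so Hopf's lemma for $-\Delta+m^2$ forces $\partial_x w(0,y_0)>0$, a contradiction. Hence $w>0$ on $[0,\infty)\times\mathbb{R}^N$.

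Next I would show that $w$ vanishes at infinity and reduce the Hartree term to a small linear term. From $w\in H^1(\mathbb{R}^{N+1}_+)$ and the equation, interior and boundary $L^\infty$-estimates on unit balls give $\|w\|_{L^\infty(B_1(X_0)\cap\{x\ge0\})}\to0$ as $|X_0|\to\infty$, hence $w(X)\to0$. Since $w$ is bounded and, by ($f_1$)--($f_2$), $0\le F(t)\le Ct^2$ near $0$ with $F$ bounded on bounded sets, we obtain $F(w(0,\cdot))\in L^1(\mathbb{R}^N)\cap L^\infty(\mathbb{R}^N)$, and then ($W_h$) with Young's inequality gives $\|W*F(w(0,\cdot))\|_{L^\infty}\le M<\infty$. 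Given $\varepsilon>0$, ($f_1$) together with $w(0,y)\to0$ yields $R_\varepsilon$ such that $f(w(0,y))\le\varepsilon\,w(0,y)$ for $|y|\ge R_\varepsilon$, so
$$-\partial_x w(0,y)+\bigl(V(y)-M\varepsilon\bigr)w(0,y)\le 0\qquad\text{for }|y|\ge R_\varepsilon .$$

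Then I would run a comparison against an exponential barrier. Fix $\alpha\in(V_0,m)$, set $\mu=m-\alpha\in(0,m)$ and $\Phi(X)=e^{-\mu|X|}$. Away from the origin $\Delta|X|=N/|X|$, so in $\{x>0\}$
$$-\Delta\Phi+m^2\Phi=\Bigl(m^2-\mu^2+\tfrac{\mu N}{|X|}\Bigr)\Phi>0,$$
i.e.\ $\Phi$ is a strict supersolution of the bulk equation, while $\partial_x\Phi(0,y)=0$ for $y\neq0$. Choose $\varepsilon$ small and $R\ge R_\varepsilon$ large enough that $V(y)-M\varepsilon\ge\tfrac{1}{2}V_\infty>0$ whenever $|y|\ge R$ (possible by ($V_2$)), and work on $\Omega_R=\{x\ge0\}\setminus\overline{B_R}$. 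Compare $w$ with $C\Phi+\eta$, where the positive constant $\eta$ is itself a supersolution of $-\Delta+m^2$. For $C$ large we have $C\Phi+\eta\ge w$ on the compact cap $\partial B_R\cap\{x\ge0\}$, while $C\Phi+\eta-w\to\eta>0$ at infinity. If $z:=C\Phi+\eta-w$ attained a negative infimum it would be at a point — not interior (strict bulk supersolution and $m^2>0$), not on the cap or at infinity — hence at some $(0,y_0)$ with $|y_0|\ge R$; there Hopf's lemma gives $\partial_x z(0,y_0)>0$, i.e.\ $\partial_x w(0,y_0)<0$, whereas the boundary inequality above and $w(0,y_0)>0$ force $\partial_x w(0,y_0)\ge\bigl(V(y_0)-M\varepsilon\bigr)w(0,y_0)>0$, a contradiction. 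Thus $w\le C\Phi+\eta$ on $\Omega_R$; letting $\eta\to0$ and enlarging $C$ to also dominate $w$ on the compact set $\overline{B_R}\cap\{x\ge0\}$ (where $\Phi$ is bounded below by a positive constant) gives $0<w(x,y)\le Ce^{-(m-\alpha)\sqrt{x^2+|y|^2}}\le Ce^{-(m-\alpha)\sqrt{x^2+|y|^2}}e^{\alpha x}$ on $[0,\infty)\times\mathbb{R}^N$, the last step because $x\ge0$. Taking $x=0$ gives $w(0,y)\le Ce^{-(m-\alpha)|y|}$; since $\alpha\in(V_0,m)$ is arbitrary this yields the stated bound with any $0<\delta<m-V_0$.

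The main obstacle is the comparison step: one must produce a single function that is simultaneously a supersolution of the degenerate-looking bulk equation and compatible at the boundary with the nonstandard Robin-type condition arising from the Dirichlet--Neumann realization of $\sqrt{-\Delta+m^2}$, while $V$ is only sign-definite near infinity and the Hartree term couples boundary values nonlocally; moreover the comparison has to be carried out on the unbounded domain $\mathbb{R}^{N+1}_+$, which is why the additive constant $\eta$ and the exhaustion $\Omega_R$ are used. The hypotheses on $V$ and the restriction $\alpha\in(V_0,m)$ enter precisely in verifying the boundary inequality for the barrier; everything else — positivity, decay at infinity, integrability of $W*F(w)$, and the passage between \eqref{original} and \eqref{P} — is routine once Theorems \ref{t1} and \ref{classical} are available.
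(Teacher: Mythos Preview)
Your argument is correct and follows the same overall architecture as the paper---barrier plus maximum principle with a Hopf contradiction on $\{x=0\}$---but the implementation differs in a way worth noting. The paper's barrier is $f_R(x,y)=C_R\,e^{-\alpha x}e^{-(m-\alpha)|X|}$, which satisfies the Robin condition $-\partial_x f_R=\alpha f_R$ on $\{x=0\}$; to close the Hopf step they then pass to $z=(f_R-w)e^{\lambda x}$ and choose $\alpha=\lambda$, so that at a negative boundary minimum one gets $-\partial_x z=(V(y_0)+\lambda)w-K(y_0)f(w)>0$ once $\lambda>V_0$ and $w(0,y_0)$ is small. Your barrier $\Phi=e^{-\mu|X|}$ has $\partial_x\Phi(0,y)=0$, so no substitution is needed and the contradiction reduces to $\partial_x w(0,y_0)\ge (V(y_0)-M\varepsilon)w(0,y_0)>0$, which you obtain from $(V_2)$: $V_\infty>0$. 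This is simpler, and in fact your comparison nowhere uses $\alpha>V_0$; under the paper's standing hypotheses your proof actually yields the sharper bound $w\le Ce^{-\mu|X|}$ for every $\mu<m$, from which the stated estimate follows trivially. Your closing remark that ``the restriction $\alpha\in(V_0,m)$ enters precisely in verifying the boundary inequality for the barrier'' is therefore not quite right for your own argument---it is the paper's route, via the Robin barrier and the $e^{\lambda x}$ trick, that genuinely needs $\alpha>V_0$ (and only $(V_1)$), whereas yours trades that restriction for the assumption $V_\infty>0$. Your strict-positivity step via Hopf at a boundary zero is also more explicit than the paper's one-line appeal to Harnack.
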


The natural setting for problem \eqref{P} is the Sobolev space \[H^1(\mathbb{R}^{N+1}_+)=\left\{u\in L^2(\mathbb{R}^{N+1}_+)\,:\, \iint_{\mathbb{R}^{N+1}_+}|\nabla u|^2\dd x\dd y<\infty \right\}\]
endowed with the norm
\[\|u\|^2=\iint_{\mathbb{R}^{N+1}_+}\left(|\nabla u|^2+u^2\right)\dd x\dd y.\]

\noindent\textbf{Notation.} The norm in the space $\mathbb{R}^{N+1}_+$ will be denoted by $\|\cdot\|$. For all $q\in [1,\infty]$, we denote by $|\cdot|_q$ the norm in the space $L^q(\mathbb{R}^{N})$ and by $\|\cdot\|_q$ the norm in the space $L^{q}(\mathbb{R}^{N+1}_+)$. \vspace*{.3cm}

It is well-known that traces of functions $H^1(\mathbb{R}^{N+1}_+)$ are in  $H^{1/2}(\mathbb{R}^{N})$ and that every function in $H^{1/2}(\mathbb{R}^{N})$ is the trace of a function in $H^1(\mathbb{R}^{N+1}_+)$, see \cite{Tartar}. Denoting $\gamma\colon H^1(\mathbb{R}^{N+1}_+)\to H^{1/2}(\mathbb{R}^{N})$ the linear function that associates the trace $\gamma(v)\in H^{1/2}(\mathbb{R}^{N})$ of the function $v\in H^1(\mathbb{R}^{N+1}_+)$, then $\ker\,\gamma=H^1_0(\mathbb{R}^{N+1}_+)$.

The immersions
\begin{align}\label{immersions}H^1(\mathbb{R}^{N+1}_+)&\hookrightarrow L^q(\mathbb{R}^{N+1}_+)\\
H^{1/2}(\mathbb{R}^{N})&\hookrightarrow L^q(\mathbb{R}^{N})\end{align}
are continuous for any $q\in [2,2^*]$ and $[2,2^{\#}]$ respectively, where
\begin{equation}\label{2*}2^{*}=\frac{2(N+1)}{N-1}\qquad\textrm{and}\qquad 2^{\#}=\frac{2N}{N-1}.\end{equation}

The space $H^{1/2}(\mathbb{R}^{N})$ is defined by means of Fourier transforms; therefore, we can not change $\mathbb{R}^{N}$ to a bounded open set $\Omega\subset\mathbb{R}^{N}$. However (see \cite{Demengel}),  $H^{1/2}(\mathbb{R}^{N})=W^{1/2,2}(\mathbb{R}^{N})$  and $W^{1/2,2}(\Omega)$ is well-defined for an open set $\Omega\subset\mathbb{R}^{N}$. We recall its definition. Let $u\colon \Omega\to \mathbb{R}$ a measurable function and $\Omega$ a bounded open set (that, in the sequel, we suppose to have Lipschitz boundary). Denoting
\[[u]^2_{\Omega}=\int_\Omega\int_\Omega\frac{|u(x)-u(y)|^2}{|x-y|^{N+1}}\dd x\dd y\]
and
\begin{align*}W^{1/2,2}(\Omega)&=\left\{u\in L^2(\mathbb{R}^{N})\,:\,[u]^2_{\Omega}<\infty\right\},
\end{align*}
then $W^{1/2,2}(\Omega)$ is a reflexive Banach space (see, e.g., \cite{Demengel} and \cite{Guide}) endowed with the norm
\[\|u\|_{W^{1/2,2}(\Omega)}=|u|_2+[u]_{\Omega}.\]\goodbreak

The proof of the next result can be found in \cite[Theorem 4.54]{Demengel}.
\begin{theorem}\label{immersionW}
	The immersion $W^{1/2,2}(\Omega)\hookrightarrow L^q(\Omega)$ is compact for any $q\in \left[1,2^{\#}\right)$.
\end{theorem}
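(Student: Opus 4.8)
The plan is to build the compact embedding from three ingredients: a bounded extension operator $E\colon W^{1/2,2}(\Omega)\to H^{1/2}(\mathbb{R}^{N})$, which exists because $\partial\Omega$ is Lipschitz; the Riesz--Fr\'echet--Kolmogorov compactness criterion, applied in $L^2$; and a final interpolation step. I freely use the identification $H^{1/2}(\mathbb{R}^{N})=W^{1/2,2}(\mathbb{R}^{N})$ with equivalent norms (see \cite{Demengel}) and the continuous embedding $H^{1/2}(\mathbb{R}^{N})\hookrightarrow L^q(\mathbb{R}^{N})$ for $q\in[2,2^{\#}]$ recorded in \eqref{immersions}. Since $\Omega$ is bounded with Lipschitz boundary, there is a linear operator $E$ with $Eu|_\Omega=u$, with $\operatorname{supp}Eu$ contained in a fixed bounded set $\Omega'\supset\overline\Omega$ independent of $u$ (arrange this by a cutoff at the end of the construction), and with $\|Eu\|_{H^{1/2}(\mathbb{R}^{N})}\le C\|u\|_{W^{1/2,2}(\Omega)}$; see \cite{Demengel} or \cite{Guide}. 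Composing $E$ with $H^{1/2}(\mathbb{R}^{N})\hookrightarrow L^{2^{\#}}(\mathbb{R}^{N})$ and restricting to $\Omega$ yields the continuous embedding $W^{1/2,2}(\Omega)\hookrightarrow L^{2^{\#}}(\Omega)$, hence $W^{1/2,2}(\Omega)\hookrightarrow L^q(\Omega)$ continuously for every $q\in[1,2^{\#}]$ because $\Omega$ is bounded; it then remains to prove compactness for $q<2^{\#}$.

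Let $\{u_n\}$ be bounded in $W^{1/2,2}(\Omega)$ and set $v_n=Eu_n$, a bounded sequence in $H^{1/2}(\mathbb{R}^{N})$ with $v_n|_\Omega=u_n$ and all supports contained in $\Omega'$. Writing $\tau_hg=g(\cdot+h)$, Plancherel's theorem gives, for every $g\in H^{1/2}(\mathbb{R}^{N})$,
\[\|\tau_hg-g\|_{L^2(\mathbb{R}^{N})}^2=\int_{\mathbb{R}^{N}}\bigl|e^{-ih\cdot\xi}-1\bigr|^2\,|\widehat g(\xi)|^2\,\dd\xi\le 2|h|\int_{\mathbb{R}^{N}}|\xi|\,|\widehat g(\xi)|^2\,\dd\xi\le C|h|\,\|g\|_{H^{1/2}(\mathbb{R}^{N})}^2,\]
where I used $|e^{-ih\cdot\xi}-1|^2\le\min\{4,|h|^2|\xi|^2\}\le 2|h||\xi|$. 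Applying this with $g=v_n$ shows $\sup_n\|\tau_hv_n-v_n\|_{L^2(\mathbb{R}^{N})}\to 0$ as $|h|\to 0$. Since, in addition, $\{v_n\}$ is bounded in $L^2(\mathbb{R}^{N})$ and uniformly supported in the bounded set $\Omega'$ (so the tightness requirement is trivial), the Riesz--Fr\'echet--Kolmogorov theorem produces a subsequence of $\{v_n\}$ converging in $L^2(\mathbb{R}^{N})$; restricting to $\Omega$, the corresponding subsequence of $\{u_n\}$ converges to some $u$ in $L^2(\Omega)$.

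It remains to upgrade this to convergence in $L^q(\Omega)$. For $q\in[1,2]$, H\"older's inequality and the boundedness of $\Omega$ give $\|u_n-u\|_{L^q(\Omega)}\le|\Omega|^{1/q-1/2}\|u_n-u\|_{L^2(\Omega)}\to 0$. For $q\in(2,2^{\#})$, pick $\lambda\in(0,1)$ with $\tfrac1q=\tfrac{1-\lambda}{2}+\tfrac{\lambda}{2^{\#}}$; by the first step $\{u_n\}$ and $u$ are bounded in $L^{2^{\#}}(\Omega)$, so the interpolation inequality
\[\|u_n-u\|_{L^q(\Omega)}\le\|u_n-u\|_{L^2(\Omega)}^{1-\lambda}\,\|u_n-u\|_{L^{2^{\#}}(\Omega)}^{\lambda}\]
forces $\|u_n-u\|_{L^q(\Omega)}\to 0$. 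Hence every bounded sequence in $W^{1/2,2}(\Omega)$ has a subsequence convergent in $L^q(\Omega)$ for every $q\in[1,2^{\#})$, which, combined with the continuity already established, proves the theorem.

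I expect the main obstacle to be the boundedness of the extension operator $E$ on the Lipschitz domain $\Omega$: this is the only point at which the geometric hypothesis on $\partial\Omega$ is genuinely used, and a self-contained treatment would require either invoking a $W^{1/2,2}$-extension theorem for Lipschitz domains or, alternatively, proving directly a fractional Poincar\'e-type estimate on $\Omega$ that controls $L^2$-translations uniformly up to $\partial\Omega$. By contrast, the Fourier computation of the translation modulus, the application of the Riesz--Fr\'echet--Kolmogorov criterion, and the interpolation step are all routine.
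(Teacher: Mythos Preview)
Your argument is correct: the extension to $H^{1/2}(\mathbb{R}^{N})$ via a bounded Lipschitz-domain extension operator, the Fourier estimate $|e^{-ih\cdot\xi}-1|^2\le 2|h||\xi|$ giving uniform $L^2$-equicontinuity under translations, the Riesz--Fr\'echet--Kolmogorov criterion, and the final interpolation between $L^2$ and $L^{2^{\#}}$ are all sound and assembled properly.

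For comparison, the paper does not prove this statement at all; it simply records it as a known fact and refers the reader to \cite[Theorem 4.54]{Demengel}. Your write-up therefore goes well beyond what the paper offers. The route you chose (extension plus Kolmogorov plus interpolation) is one of the standard proofs of compact fractional Sobolev embeddings; the textbook proof in \cite{Demengel} proceeds somewhat differently, but for the purposes of this paper your self-contained argument is entirely adequate, and your identification of the extension theorem as the only non-routine input is accurate.
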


As usual, the immersion $W^{1/2,2}(\Omega)\hookrightarrow L^{2^{\#}}(\Omega)$ is continuous: see \cite[Corollary 4.53]{Demengel}. We denote the norm in the space $L^q(\Omega)$ by $|\cdot|_{L^q(\Omega)}$.
\section{Preliminaries}
Let us suppose that  $u\in H^1(\mathbb{R}^{N+1})\cap C^\infty_0(\mathbb{R}^{N+1}_+)$ and $u(x,y)\geq 0$. Let us proceed heuristically: since
\[|u(0,y)|^t=\int_{\infty}^{0}\frac{\partial}{\partial x}|u(x,y)|^t\dd x=\int_{\infty}^{0}t|u(x,y)|^{t-2}u(x,y)\frac{\partial}{\partial x}u(x,y)\dd x,\]
it follows from Hölder's inequality
\begin{align}\label{Heu}\int_{\mathbb{R}^{N}}|\gamma(u)|^t=\int_{\mathbb{R}^{N}}|u(0,y)|^t\dd y&\leq \int_{\mathbb{R}^{N}}\int_0^\infty t|u(x,y)|^{t-1}|\nabla u(x,y)|\dd x\dd y\nonumber\\
&\leq t\left(\int_{\mathbb{R}^{N+1}_+}|u|^{2(t-1)}\right)^{1/2}\left(\int_{\mathbb{R}^{N+1}_+}|\nabla u|^2\right)^{1/2}\nonumber\\
&\leq t\|u\|_{2(t-1)}^{t-1}\|\nabla u\|_{2}.
\end{align}
So, in order to apply the immersion $H^1(\mathbb{R}^{N+1}_+)\hookrightarrow L^q(\mathbb{R}^{N+1}_+)$ we must have $2\leq 2(t-1)\leq \frac{2(N+1)}{N-1}$, that is, \begin{equation}\label{p}
2\leq t\leq\frac{2N}{N-1}=2^{\#}.
\end{equation}
By density of $H^1(\mathbb{R}^{N+1})\cap C^\infty_0(\mathbb{R}^{N+1}_+)$ in $H^1(\mathbb{R}^{N+1}_+)$, the estimate \eqref{Heu} is valid for all $u\in H^1(\mathbb{R}^{N+1}_+)$.

Taking into account \eqref{immersions}, Young's inequality applied to \eqref{Heu} yields
\begin{align}\label{casep}|\gamma(u)|_{t}&\leq \|u\|_{2(t-1)}^{(t-1)/t}\left(t\|\nabla u\|_{2}\right)^{1/t}\\
&\leq \frac{t-1}{t}\|u\|_{2(t-1)}+\|\nabla u\|_{2}\nonumber\\
&\leq C_t\|u\|,\nonumber
\end{align}
where $C_t$ is a constant. We summarize:
\begin{equation}\label{gammav}
|\gamma(u)|\in {L^t(\mathbb{R}^{N})},\ \ \forall\ t\in [2,2^{\#}].\end{equation}

The inequality \eqref{casep} will also be valuable in the special case $t=2$:
\begin{align}\label{p=2}
|\gamma(u)|^2_{2}&\leq \|u\|_{2}\left(2\|\nabla u\|_{2}\right)\nonumber\\
&\leq \lambda\iint_{\mathbb{R}^{N+1}_+}u^2+\frac{1}{\lambda}\iint_{\mathbb{R}^{N+1}_+}|\nabla u|^2
\end{align}
where $\lambda>0$ is a parameter, the last inequality being a consequence of Young's inequality.

\begin{remark}\label{obs1}
It follows from \textup{($f_3$)} that $f$ satisfies the Ambrosetti-Rabinowitz inequality $2F(t)\leq f(t)t$, for all $t>0$. Furthermore, it follows from \textup{($f_1$)} and \textup{($f_2$)} that, for any fixed $\xi>0$, there exists a constant $C_\xi$ such that
\begin{equation}\label{boundf}|f(t)|\leq\xi t+C_\xi t^{\theta-1},\quad\forall\ t\geq 0\end{equation}
and analogously
\begin{equation}\label{boundF}|F(t)|\leq\xi t^2+C_\xi t^{\theta}\leq C(t^2+t^\theta),\quad\forall\ t\geq 0.\end{equation}
Observe that $\gamma(u)\in L^\theta(\mathbb{R}^{N})$ and $\gamma(u)\in L^2(\mathbb{R}^{N})$ imply $F(\gamma(u))\in L^1(\mathbb{R}^{N})$.
\end{remark}

\begin{proposition}[Hausdorff-Young]\label{HYoung}
Assume that, for $1\leq p, q, s \leq \infty$, we have $f\in L^p (\mathbb{R}^{N})$, $g\in  L^q (\mathbb{R}^{N})$ and
\[\frac{1}{p}+\frac{1}{q}= 1 +\frac{1}{s}.\] Then
\[|f*g|_{s} \leq |f|_{p}|g|_{q}.\]
\end{proposition}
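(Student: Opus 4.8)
The plan is to derive the inequality from Hölder's inequality applied twice together with Fubini's theorem; alternatively one could invoke the Riesz--Thorin interpolation theorem, interpolating between the elementary bounds $L^1\times L^q\to L^q$ (Minkowski's integral inequality) and $L^{q'}\times L^q\to L^\infty$ (Hölder's inequality). I record first the exponent bookkeeping: from $\frac1p+\frac1q=1+\frac1s$ one gets $\frac1p=1+\frac1s-\frac1q\ge\frac1s$ and similarly $\frac1q\ge\frac1s$, so $p\le s$, $q\le s$, and $1-p/s$, $1-q/s$ are non-negative. Since the right-hand side is unchanged by replacing $f,g$ with $|f|,|g|$, one may take $f,g\ge0$; and by truncating and applying the monotone convergence theorem it suffices to prove the estimate for $f,g$ bounded with compact support, which makes every integral below finite and legitimizes Fubini. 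The degenerate cases $p\in\{1,s\}$, $q\in\{1,s\}$, or $s=\infty$ reduce to Minkowski's integral inequality or to Hölder's inequality directly, and I would dispose of them separately.

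Assume now $1<p,q<s<\infty$. Fix $x\in\mathbb{R}^{N}$ and split
\[
f(y)\,g(x-y)=\bigl(f(y)^p\,g(x-y)^q\bigr)^{1/s}\cdot f(y)^{\,1-p/s}\cdot g(x-y)^{\,1-q/s}.
\]
Apply Hölder's inequality in the variable $y$ with the three exponents $s$, $u:=\frac{ps}{s-p}$ and $u':=\frac{qs}{s-q}$; the hypothesis $\frac1p+\frac1q=1+\frac1s$ is exactly the identity $\frac1s+\frac1u+\frac1{u'}=1$, and $u,u'\ge1$ because $p,q\ge1$. Since $(1-p/s)u=p$ and $(1-q/s)u'=q$, the second and third factors, raised to $u$ and $u'$ and integrated in $y$, contribute $|f|_p^{p/u}=|f|_p^{\,1-p/s}$ and $|g|_q^{\,1-q/s}$ (the latter by translation invariance of Lebesgue measure), so that
\[
(f*g)(x)\le \int_{\mathbb{R}^{N}} f(y)\,g(x-y)\,\dd y\le h(x)^{1/s}\,|f|_p^{\,1-p/s}\,|g|_q^{\,1-q/s},\qquad h(x):=\int_{\mathbb{R}^{N}} f(y)^p\,g(x-y)^q\,\dd y.
\]

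Raising to the power $s$ and integrating in $x$ gives $|f*g|_s^{\,s}\le |f|_p^{(1-p/s)s}\,|g|_q^{(1-q/s)s}\int_{\mathbb{R}^{N}}h$, and by Fubini together with translation invariance $\int_{\mathbb{R}^{N}}h=|f|_p^{\,p}\,|g|_q^{\,q}$; substituting yields $|f*g|_s^{\,s}\le|f|_p^{\,s}\,|g|_q^{\,s}$, i.e. $|f*g|_s\le|f|_p|g|_q$. Undoing the truncation reduction by monotone convergence completes the argument. I do not expect a genuine obstacle: the only point demanding attention is the exponent arithmetic — verifying $\frac1s+\frac1u+\frac1{u'}=1$ and $u,u'\ge1$, and isolating the boundary cases in which $u$ or $u'$ is infinite (so that the corresponding Hölder factor becomes an $L^\infty$ norm) — which is routine.
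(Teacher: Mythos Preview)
Your proof is correct: this is the standard three-exponent H\"older argument for Young's convolution inequality, and the exponent bookkeeping you record is accurate. Note, however, that the paper does not prove this proposition at all --- it is stated without proof as a classical fact and then invoked in later lemmas --- so there is no ``paper's own proof'' to compare against. Your write-up simply supplies the well-known argument the authors chose to omit.
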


We now enhance the result given by \eqref{gammav}. Observe that $\frac{N}{N(2-\theta)+\theta}\geq 1$ and  $\frac{N}{N(2-\theta)+\theta}=1$ if, and only if $N=\theta=2$.

The results in the sequel will be useful when addressing the regularity of the solution of problem \eqref{P}.
\begin{lemma}\label{hipW} Concerning hypothesis $(W_h)$ we have:
	\begin{enumerate}
		\item [($i$)] if $r\in \left(\displaystyle\frac{N}{N(2-\theta)+\theta},\frac{2N}{N(2-\theta)+\theta}\right]$, there exists $\displaystyle p\in \left[1,\frac{2N}{(N-1)\theta}\right]$
		such that \[|\gamma(u)|^\theta\in L^p(\mathbb{R}^{N})\] and
		\[\frac{1}{p}+\frac{1}{r}=1+\frac{N(2-\theta)+\theta}{2N}.\]
		Furthermore, $F(\gamma(u))\in L^p(\mathbb{R}^{N})$ and
		\[|W_1*F(\gamma(u))|=:g\in {L^{2N/[N(2-\theta)+\theta]}(\mathbb{R}^{N})}.\]
		\item [($ii$)] if $r'$ denotes the conjugate exponent of $r$ and $r>\displaystyle\frac{2N}{N(2-\theta)+\theta}$, then $F(\gamma(u))\in L^{r'}(\mathbb{R}^{N})$ and $W_1*F(\gamma(u))\in L^\infty(\mathbb{R}^{N})$.
	\end{enumerate}
\end{lemma}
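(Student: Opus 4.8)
The plan is to establish the two cases of Lemma~\ref{hipW} by tracking integrability exponents through Hausdorff–Young (Proposition~\ref{HYoung}), using the interpolation already available from \eqref{gammav}. Recall that $\gamma(u)\in L^q(\mathbb{R}^N)$ for every $q\in[2,2^{\#}]$, so $|\gamma(u)|^\theta\in L^p(\mathbb{R}^N)$ exactly when $p\theta\in[2,2^{\#}]$, i.e. $p\in[2/\theta,2^{\#}/\theta]=[2/\theta,\,2N/((N-1)\theta)]$. Since $\theta>2$ we have $2/\theta<1$, so the admissible range of $p$ actually contains $[1,2N/((N-1)\theta)]$; this is why the statement can assert $p\in[1,2N/((N-1)\theta)]$. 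By \eqref{boundF}, $|F(\gamma(u))|\le C(|\gamma(u)|^2+|\gamma(u)|^\theta)$, and both $|\gamma(u)|^2$ and $|\gamma(u)|^\theta$ lie in $L^p$ for $p$ in that range (for $|\gamma(u)|^2$ one needs $2p\in[2,2^{\#}]$, which holds on a subinterval; one should check the precise endpoint, but the values of $p$ we will actually select are small enough), hence $F(\gamma(u))\in L^p(\mathbb{R}^N)$.

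For part ($i$), I would solve the Hausdorff–Young bookkeeping equation
\[
\frac1p+\frac1r=1+\frac{N(2-\theta)+\theta}{2N}
\]
for $p$ in terms of $r$, obtaining $\frac1p=\frac{N(2-\theta)+\theta}{2N}-\frac1r+1$. One then checks that as $r$ ranges over $\bigl(\frac{N}{N(2-\theta)+\theta},\frac{2N}{N(2-\theta)+\theta}\bigr]$, the corresponding $p$ stays in $[1,2N/((N-1)\theta)]$: at $r=\frac{2N}{N(2-\theta)+\theta}$ one gets $\frac1p=\frac{N(2-\theta)+\theta}{2N}-\frac{N(2-\theta)+\theta}{2N}+1=1$, so $p=1$; at the other endpoint $r\to\frac{N}{N(2-\theta)+\theta}$ one gets the largest $p$, and a short computation shows it equals $\frac{2N}{(N-1)\theta}$ — this is exactly where the hypothesis threshold in $(W_h)$ comes from, and verifying this endpoint identity is the one genuinely delicate arithmetic step. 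Since $W_1\in L^r(\mathbb{R}^N)$ and $F(\gamma(u))\in L^p(\mathbb{R}^N)$ with $\frac1p+\frac1r=1+\frac1s$ where $s=\frac{2N}{N(2-\theta)+\theta}$, Proposition~\ref{HYoung} gives $W_1*F(\gamma(u))\in L^s(\mathbb{R}^N)$ with the stated norm bound, proving ($i$).

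For part ($ii$), when $r>\frac{2N}{N(2-\theta)+\theta}$ the conjugate exponent satisfies $r'<\frac{2N}{N(2-\theta)+\theta-N}$; a direct check shows $r'$ lies in the range for which $|\gamma(u)|^\theta\in L^{r'}$ (again using $\gamma(u)\in L^q$ for $q\in[2,2^{\#}]$ together with \eqref{boundF}), so $F(\gamma(u))\in L^{r'}(\mathbb{R}^N)$. Then $W_1\in L^r$ and $F(\gamma(u))\in L^{r'}$ with $\frac1r+\frac1{r'}=1$, so Hausdorff–Young with $s=\infty$ yields $W_1*F(\gamma(u))\in L^\infty(\mathbb{R}^N)$, which is the claim. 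The main obstacle I anticipate is purely the exponent arithmetic: one has to be careful that the lower threshold $\frac{N}{N(2-\theta)+\theta}\ge1$ (noted right before the lemma, with equality iff $N=\theta=2$) makes $r'$ finite and keeps all selected exponents in the admissible interpolation window $[2,2^{\#}]$ for powers of $\gamma(u)$; once the inequalities $2\le(\text{power})\cdot(\text{exponent})\le 2^{\#}$ are verified at the relevant endpoints, the rest is a mechanical application of Proposition~\ref{HYoung} and \eqref{boundF}.
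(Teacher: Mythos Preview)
Your approach is correct and essentially identical to the paper's: solve the Hausdorff--Young relation for $p$, check via the endpoint arithmetic that $p$ lands in $\bigl[1,\,2N/((N-1)\theta)\bigr]$, use \eqref{gammav} and \eqref{boundF} to place $F(\gamma(u))$ in $L^p$, and apply Proposition~\ref{HYoung}. Your hesitation about $|\gamma(u)|^2\in L^p$ holding only on a subinterval is unnecessary (since $\theta>2$ forces $2p\le 4N/((N-1)\theta)\le 2^{\#}$ on the whole range), and the paper handles~(ii) exactly as you do, by observing that $r'<2N/((N-1)\theta)$ so the work in~(i) already gives $F(\gamma(u))\in L^{r'}$.
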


\noindent\begin{proof}($i$) We verify the values of $r$ that satisfy the equality \[\frac{1}{p}+\frac{1}{r}=1+\frac{N(2-\theta)+\theta}{2N}.\] Observe that $r\in \left(\frac{N}{N(2-\theta)+\theta},\frac{2N}{N(2-\theta)+\theta}\right]$ if, and only if, $p\in \left[1,\frac{2N}{(N-1)\theta}\right)$.
	
As consequence of \eqref{gammav} $|\gamma(u)|^\theta\in L^p(\mathbb{R}^{N})$ and thus $|\gamma(u)|^2\in L^p(\mathbb{R}^{N})$ and \eqref{boundF} yields $F(\gamma(u))\in L^p(\mathbb{R}^{N})$. So, $|W_1*F(\gamma(u))|=g\in {L^{2N/[N(2-\theta)+\theta]}(\mathbb{R}^{N})}$ follows from the Hausdorff-Young inequality.
	
($ii$) Since $W_1\in L^r(\mathbb{R}^{N})$ for $r=\frac{2N}{N(2-\theta)+\theta}$ and $r'=\frac{r}{r-1}=\frac{2N}{(N-1)\theta}$, applying ($i$) we conclude that $F(\gamma(u))\in L^{r'}(\mathbb{R}^{N})$ and $W_1*F(\gamma(u))\in L^\infty(\mathbb{R}^{N})$ is consequence of Proposition \ref{HYoung}.
$\hfill\Box$\end{proof}

\begin{corollary}\label{cor}We have
$|W*F(\gamma(u))|\leq C+g$ with $g\in L^{{2N/[N(2-\theta)+\theta]}}(\mathbb{R}^{N})$.
\end{corollary}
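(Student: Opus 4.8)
The plan is to exploit the decomposition $W=W_1+W_2$ furnished by hypothesis $(W_h)$ and treat the two convolutions separately. Writing
\[
W*F(\gamma(u))=W_1*F(\gamma(u))+W_2*F(\gamma(u)),
\]
I would first dispose of the $W_2$-term: since $W_2\in L^\infty(\mathbb{R}^{N})$ and, by Remark \ref{obs1}, $F(\gamma(u))\in L^1(\mathbb{R}^{N})$, Proposition \ref{HYoung} with $p=\infty$, $q=1$, $s=\infty$ (equivalently the elementary estimate for convolution of an $L^\infty$ and an $L^1$ function) gives $|W_2*F(\gamma(u))|_\infty\le |W_2|_\infty\,|F(\gamma(u))|_1=:C_1<\infty$. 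Hence $|W_2*F(\gamma(u))|\le C_1$ pointwise.

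For the $W_1$-term I would split into the two cases that together exhaust the admissible range $r>\frac{N}{N(2-\theta)+\theta}$ of $(W_h)$. If $r>\frac{2N}{N(2-\theta)+\theta}$, then Lemma \ref{hipW}$(ii)$ applies and $W_1*F(\gamma(u))\in L^\infty(\mathbb{R}^{N})$, so $|W_1*F(\gamma(u))|\le C_2$ for some constant; in this case one simply takes $g\equiv 0\in L^{2N/[N(2-\theta)+\theta]}(\mathbb{R}^{N})$. If instead $r\in\left(\frac{N}{N(2-\theta)+\theta},\frac{2N}{N(2-\theta)+\theta}\right]$, then Lemma \ref{hipW}$(i)$ gives directly that $g:=|W_1*F(\gamma(u))|\in L^{2N/[N(2-\theta)+\theta]}(\mathbb{R}^{N})$, and there is no constant to add for this piece. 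Since these two cases cover all $r$ allowed by $(W_h)$, in every situation we have $|W_1*F(\gamma(u))|\le C_2+g$ with $g\in L^{2N/[N(2-\theta)+\theta]}(\mathbb{R}^{N})$ (with $C_2$ or $g$ possibly zero).

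Combining the two estimates and using $W=W_1+W_2\ge 0$,
\[
|W*F(\gamma(u))|\le |W_1*F(\gamma(u))|+|W_2*F(\gamma(u))|\le (C_1+C_2)+g=:C+g,
\]
which is the claimed inequality. There is essentially no serious obstacle here; the only point requiring a moment's care is checking that the two ranges of $r$ in Lemma \ref{hipW} partition $\left(\frac{N}{N(2-\theta)+\theta},\infty\right)$, so that the conclusion is uniform in the hypothesis $(W_h)$, and bookkeeping the constants so the final bound has the stated form with a single additive constant and a single $L^{2N/[N(2-\theta)+\theta]}$ function.
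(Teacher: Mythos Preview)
Your proof is correct and follows essentially the same approach as the paper: the paper's proof reads in its entirety ``An immediate consequence of Lemma~\ref{hipW}, since $W_2\in L^\infty(\mathbb{R}^{N})$,'' and you have simply unpacked this by handling the $W_2$-piece via the trivial $L^\infty\!*\!L^1\hookrightarrow L^\infty$ bound and the $W_1$-piece by invoking the two cases of Lemma~\ref{hipW}. The only superfluous remark is the mention of $W\ge 0$ at the end, which plays no role in the triangle inequality you use.
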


\noindent\begin{proof}An immediately consequence of Lemma \ref{hipW}, since $W_2\in L^\infty(\mathbb{R}^{N})$.
$\hfill\Box$\end{proof}\vspace*{.4cm}

Following arguments in \cite{ZelatiNolasco}, we have:
\begin{lemma}\label{c1} For all $\theta\in \left(2,\frac{2N}{N-1}\right)$, we have $|\gamma(u)|^{\theta-2}\leq 1+g_2$,	where $g_2\in L^N(\mathbb{R}^{N})$.
\end{lemma}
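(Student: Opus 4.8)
The plan is to split $\mathbb{R}^{N}$ according to the size of $|\gamma(u)|$, absorbing the region where $|\gamma(u)|\le 1$ into the constant $1$ and the region where $|\gamma(u)|>1$ into $g_2$. Explicitly, I would take
\[ g_2 := |\gamma(u)|^{\theta-2}\,\chi_{\{|\gamma(u)|>1\}}. \]
Since $\theta-2>0$, on $\{|\gamma(u)|\le 1\}$ one has $|\gamma(u)|^{\theta-2}\le 1$, while on the complement $|\gamma(u)|^{\theta-2}=g_2$; combining the two cases gives $|\gamma(u)|^{\theta-2}\le 1+g_2$ everywhere, which is the asserted pointwise bound.

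The substance of the lemma is then the claim $g_2\in L^{N}(\mathbb{R}^{N})$. Here I would use the hypothesis $\theta<\frac{2N}{N-1}=2^{\#}$, which after subtracting $2$ and multiplying by $N$ gives $0<(\theta-2)N<2^{\#}$. On the set $\{|\gamma(u)|>1\}$ the elementary pointwise inequality $t^{(\theta-2)N}\le t^2+t^{2^{\#}}$ holds for $t>1$ (treat $(\theta-2)N\le 2$ and $(\theta-2)N\ge 2$ separately, using $t\ge 1$), so
\[ \int_{\mathbb{R}^{N}}|g_2|^{N}\,\dd y = \int_{\{|\gamma(u)|>1\}}|\gamma(u)|^{(\theta-2)N}\,\dd y \le \int_{\mathbb{R}^{N}}\left(|\gamma(u)|^2+|\gamma(u)|^{2^{\#}}\right)\dd y, \]
and the right-hand side is finite by \eqref{gammav}, since $\gamma(u)\in L^{t}(\mathbb{R}^{N})$ for every $t\in[2,2^{\#}]$. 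This yields $g_2\in L^{N}(\mathbb{R}^{N})$ and completes the argument.

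I do not expect any real obstacle here; the only point requiring a moment's care is that $(\theta-2)N$ may lie on either side of $2$, so the bound $t^{(\theta-2)N}\le t^2+t^{2^{\#}}$ on $\{t>1\}$ should be justified in both regimes before invoking the $L^2$ and $L^{2^{\#}}$ integrability of the trace. The strict inequality $\theta<2^{\#}$ is used precisely to keep $(\theta-2)N\le 2^{\#}$, so that the larger power is controlled by the admissible Sobolev exponent $2^{\#}$ while the smaller range is covered by $t^2$.
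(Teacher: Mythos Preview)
Your proposal is correct and follows essentially the same route as the paper: define $g_2=|\gamma(u)|^{\theta-2}\chi_{\{|\gamma(u)|>1\}}$, get the pointwise bound trivially, and then prove $g_2\in L^N(\mathbb{R}^N)$ by splitting into the cases $(\theta-2)N\le 2$ and $(\theta-2)N>2$ and invoking \eqref{gammav}. The only cosmetic difference is that you package both cases into the single inequality $t^{(\theta-2)N}\le t^2+t^{2^{\#}}$ for $t>1$, whereas the paper writes out the two cases separately.
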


\noindent\begin{proof}We have
\[|\gamma(u)|^{\theta-2}=|\gamma(u)|^{\theta-2}\chi_{\{|\gamma(u)|\leq 1\}}+|\gamma(u)|^{\theta-2}\chi_{\{|\gamma(u)|>1\}}\leq 1+g_2,\]
with $g_2=|\gamma(u)|^{\theta-2}\chi_{\{|\gamma(u)|>1\}}$. If $(\theta-2)N\leq 2$, then
\[\int_{\mathbb{R}^{N}}|\gamma(u)|^{(\theta-2)N}\chi_{\{|\gamma(u)|>1\}}\leq \int_{\mathbb{R}^{N}}|\gamma(u)|^2\chi_{\{|\gamma(u)|>1\}}\leq\int_{\mathbb{R}^{N}}|\gamma(u)|^2<\infty.\]
	
When $2<(\theta-2)N$, then $(\theta-2)N\in \left(2,\frac{2N}{N-1}\right)$ and $|\gamma(u)|^{\theta-2}\in L^N(\mathbb{R}^{N})$ as outcome of \eqref{gammav}.
$\hfill\Box$\end{proof}

\begin{lemma}\label{c2} For all $\theta\in \left(2,\frac{2N}{N-1}\right)$ we have $h=g|\gamma(u)|^{\theta-2}\in L^N(\mathbb{R}^{N})$, where $g$ is the function of Lemma \ref{hipW}.
\end{lemma}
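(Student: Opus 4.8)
The plan is to prove that $h=g\,|\gamma(u)|^{\theta-2}\in L^N(\mathbb{R}^{N})$ by a single application of Hölder's inequality to the integrand $|g|^N|\gamma(u)|^{N(\theta-2)}$, splitting it between the two pieces of information already available: that $g\in L^{2N/[N(2-\theta)+\theta]}(\mathbb{R}^{N})$ by Lemma \ref{hipW}($i$), and that $\gamma(u)\in L^t(\mathbb{R}^{N})$ for every $t\in[2,2^{\#}]$ by \eqref{gammav}. First I would record the elementary algebraic facts that make the exponents fit. Writing $s:=N(2-\theta)+\theta$, a direct computation gives the identity $2-s=(N-1)(\theta-2)$; moreover, since $2<\theta<\frac{2N}{N-1}$, one has $s=2N-\theta(N-1)\in(0,2)$. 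Consequently $\alpha:=\frac2s$ and its conjugate exponent $\alpha':=\frac{2}{2-s}$ are both genuine Hölder exponents, i.e. strictly larger than $1$.

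Then I would apply Hölder's inequality with the pair $(\alpha,\alpha')$:
\[
\int_{\mathbb{R}^{N}}|h|^{N}\,\dd y=\int_{\mathbb{R}^{N}}|g|^{N}\,|\gamma(u)|^{N(\theta-2)}\,\dd y\leq\left(\int_{\mathbb{R}^{N}}|g|^{N\alpha}\right)^{1/\alpha}\left(\int_{\mathbb{R}^{N}}|\gamma(u)|^{N(\theta-2)\alpha'}\right)^{1/\alpha'}.
\]
Here $N\alpha=\frac{2N}{s}=\frac{2N}{N(2-\theta)+\theta}$ is exactly the integrability exponent of $g$ provided by Lemma \ref{hipW}($i$), so the first factor is finite. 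For the second factor, using $2-s=(N-1)(\theta-2)$ one computes $N(\theta-2)\alpha'=\frac{2N(\theta-2)}{2-s}=\frac{2N(\theta-2)}{(N-1)(\theta-2)}=\frac{2N}{N-1}=2^{\#}$, and since $2^{\#}\in[2,2^{\#}]$, \eqref{gammav} gives $\gamma(u)\in L^{2^{\#}}(\mathbb{R}^{N})$; hence the second factor is finite as well. This yields $h\in L^{N}(\mathbb{R}^{N})$, together with the quantitative bound $|h|_{N}\leq |g|_{2N/s}\,|\gamma(u)|_{2^{\#}}^{\theta-2}$.

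The only point that needs care — and the reason the argument is not completely trivial — is that one cannot conclude by pointwise domination, for instance by combining Lemma \ref{c1} (which gives $|\gamma(u)|^{\theta-2}\leq 1+g_2$ with $g_2\in L^{N}(\mathbb{R}^{N})$) with $g\in L^{2N/s}(\mathbb{R}^{N})$: on the infinite-measure space $\mathbb{R}^{N}$ the inclusion $L^{2N/s}\hookrightarrow L^{N}$ fails because $2N/s>N$, so the term $g$ itself need not lie in $L^{N}$. The product must instead be controlled by interpolating against the decay of $\gamma(u)$ through Hölder's inequality, and the substance of the proof is precisely the choice of exponents making the power of $\gamma(u)$ equal to the subcritical Sobolev exponent $2^{\#}$; the identity $2-s=(N-1)(\theta-2)$ is what guarantees this and is therefore the crux of the computation.
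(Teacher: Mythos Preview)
Your proof is correct and follows essentially the same route as the paper: a single application of H\"older's inequality to $\int |g|^N|\gamma(u)|^{N(\theta-2)}$ with the exponent $\alpha=2/s$ (where $s=N(2-\theta)+\theta$) chosen so that $N\alpha$ matches the integrability of $g$ from Lemma~\ref{hipW}, and then the verification that the companion exponent for $|\gamma(u)|$ lands exactly at $2^{\#}$ via the identity $2-s=(N-1)(\theta-2)$. Your write-up is somewhat more explicit about the algebra and adds a useful remark on why pointwise domination via Lemma~\ref{c1} would not suffice, but the argument is the same as the paper's.
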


\noindent\begin{proof} Application of the Hölder inequality yields
\[\int_{\mathbb{R}^{N}}\left(g|\gamma(u)|^{\theta-2}\right)^N\leq \left(\int_{\mathbb{R}^{N}}g^{N\alpha}\right)^{\frac{1}{\alpha}}\left(\int_{\mathbb{R}^{N}}\left(|\gamma(u)|^{(\theta-2)N}\right)^{\alpha'}\right)^{\frac{1}{\alpha'}},\]
if we define $\alpha$ so that $\alpha N=2N/[N(2-\theta)+\theta]$. Thus, $\alpha'=2/[(N-1)(\theta-2)]$ and we have $\alpha'N(\theta-2)=2N/[N-1]$. Since both integrals of the right-hand side of the last inequality are integrable, we are done.
$\hfill\Box$\end{proof}\hspace*{.2cm}

We now handle the existence of the ``energy'' functional. We denote by $L^q_w(\mathbb{R}^{N})$ the weak $L^q(\mathbb{R}^{N})$ space and by $|\cdot|_{q_w}$ its usual norm (see \cite{Lieb}). The next result is a generalized version of the Hardy-Littlewood-Sobolev inequality:
\begin{proposition}[Lieb \cite{Lieb}]\label{pLieb} Assume that $p,q,r\in(1,\infty)$ and \[\frac{1}{p}+\frac{1}{q}+\frac{1}{r}=2.\]
	Then, for some constant $N_{p,q,t}>0$ and for any $f\in L^p(\mathbb{R}^{N})$, $g\in L^r(\mathbb{R}^{N})$ and
	$h\in L^q_w(\mathbb{R}^{N})$, we have the inequality
	\[\int_{\mathbb{R}^{N}}\int_{\mathbb{R}^{N}}f(t)h(t-s)g(s)\dd t\dd s\leq N_{p,q,t}|f|_{p}|g|_{r} |h|_{q_w}.\]
\end{proposition}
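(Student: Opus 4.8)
The plan is to transform the left-hand side, by a change of variables, into a single pairing of $h$ against a convolution of $f$ and $g$, to observe that an ordinary Young/Hölder argument almost closes the estimate and fails only because $h$ lies merely in the weak space $L^q_w(\mathbb{R}^{N})$, and then to recover the missing room through a restricted weak-type estimate followed by interpolation. (Replacing $f,g,h$ by $|f|,|g|,|h|$ we may assume they are nonnegative.)

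First I substitute $z=t-s$, obtaining
\[\int_{\mathbb{R}^{N}}\!\int_{\mathbb{R}^{N}}f(t)h(t-s)g(s)\,\dd t\,\dd s=\int_{\mathbb{R}^{N}}h(z)\,(f*\check g)(z)\,\dd z,\qquad \check g(y):=g(-y).\]
Since the hypothesis $\tfrac1p+\tfrac1q+\tfrac1r=2$ is equivalent to $\tfrac1p+\tfrac1r-1=\tfrac1{q'}$, Young's inequality yields $f*\check g\in L^{q'}(\mathbb{R}^{N})$ with $|f*\check g|_{q'}\le|f|_p|g|_r$. Had $h$ belonged to the full space $L^q(\mathbb{R}^{N})$, Hölder's inequality would finish the argument here; the obstruction is precisely that $\int_{\mathbb{R}^{N}} h\,\varphi\le C|h|_{q_w}|\varphi|_{q'}$ is false for a generic $\varphi\in L^{q'}(\mathbb{R}^{N})$ — this is the analytic heart of the Hardy--Littlewood--Sobolev phenomenon, the model kernel $|x|^{-N/q}$ lying in $L^q_w$ but not in $L^q$.

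To regain the lost room I would first prove the restricted weak-type version: for all measurable $A,B,C\subset\mathbb{R}^{N}$ of finite measure,
\[\int_{\mathbb{R}^{N}}\!\int_{\mathbb{R}^{N}}\chi_A(t)\chi_C(t-s)\chi_B(s)\,\dd t\,\dd s\le |A|^{1/p}|B|^{1/r}|C|^{1/q}\]
(note $|\chi_C|_{q_w}=|C|^{1/q}$). Discarding, in turn, the restriction $t-s\in C$, the restriction $t\in A$, and the restriction $s\in B$ bounds the left-hand side by $|A||B|$, by $|B||C|$, and by $|C||A|$, hence by $\min\{|A||B|,|B||C|,|C||A|\}=|A||B||C|\,/\,\max\{|A|,|B|,|C|\}$. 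Writing $\alpha=1-\tfrac1p$, $\beta=1-\tfrac1r$, $\gamma=1-\tfrac1q$, the exponent relation forces $\alpha+\beta+\gamma=1$ with $\alpha,\beta,\gamma\in(0,1)$ (here $1<p,q,r<\infty$ is used), so $|A|^{\alpha}|B|^{\beta}|C|^{\gamma}\le\max\{|A|,|B|,|C|\}$ and the displayed bound follows, indeed with constant $1$; the same computation holds at every admissible exponent triple, hence on a whole neighborhood of $(\tfrac1p,\tfrac1r,\tfrac1q)$ inside the simplex $\tfrac1p+\tfrac1q+\tfrac1r=2$.

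The main obstacle is the passage from this restricted estimate to the strong-type inequality of the statement: this is the only genuinely non-elementary step, and it is furnished by a multilinear Marcinkiewicz interpolation theorem, which also produces the finite constant $N_{p,q,t}$. One may instead bypass interpolation: O'Neil's refinement of Young's inequality shows that $f*\check g$ in fact belongs to the smaller Lorentz space $L^{q',1}(\mathbb{R}^{N})$, and then Hölder's inequality pairing $L^{q,\infty}$ with $L^{q',1}$ closes the estimate; alternatively one follows Lieb's original route, reducing via the Riesz rearrangement inequality to radially symmetric nonincreasing $f,g,h$ and estimating directly. In all cases the details, and the sharp value of the constant, are as in \cite{Lieb}.
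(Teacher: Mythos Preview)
The paper does not prove this proposition; it is quoted from \cite{Lieb} and used as a known tool. Your sketch is a correct outline of one standard proof: the restricted-type bound $T(\chi_A,\chi_B,\chi_C)\le|A|^{1/p}|B|^{1/r}|C|^{1/q}$ via the three trivial majorizations and the identity $\min\{ab,bc,ca\}=abc/\max\{a,b,c\}$ is clean and correct, and you rightly locate the only non-elementary step in the passage from indicators to general functions. Any of the three routes you name---multilinear real interpolation of the restricted estimate over an open set of exponents, O'Neil's Lorentz-space Young inequality giving $f*\check g\in L^{q',1}$, or Lieb's rearrangement argument---completes the proof. The one point to watch is that the plainest form of multilinear Marcinkiewicz upgrades restricted weak type to \emph{strong} type at interior points, which would produce $|h|_q$ rather than $|h|_{q_w}$ on the right; to retain the weak norm one must either invoke the Lorentz-refined interpolation theorem, or (equivalently and more simply) fix $h\in L^{q,\infty}$, interpolate the linear convolution operator $f\mapsto f*h$ to obtain the weak Young inequality $|f*h|_{r'}\le C|f|_p|h|_{q_w}$, and then finish with H\"older against $g$.
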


\begin{lemma}\label{estconv} For a positive constant $C$ holds
\[\left|\frac{1}{2}\int_{\mathbb{R}^{N}}\big(W*F(\gamma(u))\big)F(\gamma(u))\right|\leq  C\left(\|u\|^{2}+\|u\|^{\theta}\right)^2.\]
\end{lemma}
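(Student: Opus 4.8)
The goal is to bound the Hartree double-integral term $\frac12\int_{\mathbb{R}^N}(W*F(\gamma(u)))F(\gamma(u))$ in terms of $\|u\|$. The plan is to split $W = W_1+W_2$ according to $(W_h)$ and treat the two pieces separately, then reassemble.

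First I would handle the $W_2 \in L^\infty(\mathbb{R}^N)$ piece: by Young's convolution inequality $|W_2 * F(\gamma(u))|_\infty \leq |W_2|_\infty\,|F(\gamma(u))|_1$, so
\[
\left|\int_{\mathbb{R}^N}\big(W_2*F(\gamma(u))\big)F(\gamma(u))\right| \leq |W_2|_\infty\,|F(\gamma(u))|_1^2.
\]
By \eqref{boundF}, $|F(\gamma(u))|_1 \leq C(|\gamma(u)|_2^2 + |\gamma(u)|_\theta^\theta)$, and then \eqref{casep}/\eqref{gammav} give $|\gamma(u)|_2 \leq C\|u\|$ and $|\gamma(u)|_\theta \leq C\|u\|$, so $|F(\gamma(u))|_1 \leq C(\|u\|^2 + \|u\|^\theta)$, which already has the desired form when squared.

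Next the $W_1 \in L^r(\mathbb{R}^N)$ piece. Here I would use the Hausdorff–Young inequality (Proposition \ref{HYoung}): from Lemma \ref{hipW}(i) (in the relevant range of $r$) we have $F(\gamma(u)) \in L^p(\mathbb{R}^N)$ with $\frac1p + \frac1r = 1 + \frac{N(2-\theta)+\theta}{2N}$, whence $W_1 * F(\gamma(u)) \in L^{2N/[N(2-\theta)+\theta]}(\mathbb{R}^N)$; in the complementary range $r > \frac{2N}{N(2-\theta)+\theta}$, Lemma \ref{hipW}(ii) gives $W_1*F(\gamma(u)) \in L^\infty$, which is even easier. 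Then Hölder's inequality pairs $W_1*F(\gamma(u)) \in L^{2N/[N(2-\theta)+\theta]}$ against $F(\gamma(u)) \in L^{(2N/[N(2-\theta)+\theta])'} = L^{2N/[(N-1)\theta]}$; one checks this last exponent lies in $[1,2^\#]$-compatible range so that, again via \eqref{boundF} and \eqref{gammav}, $|F(\gamma(u))|_{2N/[(N-1)\theta]} \leq C(\|u\|^2+\|u\|^\theta)$. Both convolution and second factor being controlled by $(\|u\|^2+\|u\|^\theta)$, their product is bounded by $(\|u\|^2+\|u\|^\theta)^2$. Alternatively, the whole estimate can be phrased in one stroke through the Hardy–Littlewood–Sobolev-type inequality of Proposition \ref{pLieb} applied to $W_1$ regarded in a weak-$L^q$ space, with the exponent bookkeeping being exactly the condition on $r$ in $(W_h)$.

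The main obstacle is purely the exponent bookkeeping: verifying that the exponent $p$ coming from Lemma \ref{hipW}, its conjugate, and the multiples $2p$, $\theta p$ all fall in the ranges $[2,2^\#]$ where \eqref{gammav} and the trace estimate \eqref{casep} apply, uniformly over the admissible range of $r$ in $(W_h)$ — this is where the precise lower bound $r > \frac{N}{N(2-\theta)+\theta}$ is used. Once the indices are checked, each factor is estimated by \eqref{boundF} followed by \eqref{gammav}, and combining the $W_1$ and $W_2$ contributions with the elementary inequality $(a+b)^2 \leq 2a^2+2b^2$ closes the proof.
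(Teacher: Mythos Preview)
Your proposal is correct and essentially matches the paper. The paper splits $W=W_1+W_2$ exactly as you do; for the $W_2\in L^\infty$ piece it takes $t=1$ and bounds $|F(\gamma(u))|_1$ via \eqref{boundF} and the trace estimates, just as you outline. For the $W_1\in L^r$ piece the paper goes directly through Proposition~\ref{pLieb} with $\frac{1}{r}+\frac{2}{t}=2$, i.e.\ $t=\frac{2r}{2r-1}$, and then controls $|F(\gamma(u))|_t$ by $C(\|u\|^2+\|u\|^\theta)$ --- this is precisely your ``alternative'' one-stroke route, and the condition $t\theta\leq 2^{\#}$ needed to invoke \eqref{gammav} is exactly $r>\frac{N}{N(2-\theta)+\theta}$, as you note. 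Your primary route for $W_1$ (Hausdorff--Young via Lemma~\ref{hipW} followed by H\"older) is a legitimate two-step variant yielding the same bound, at the cost of the case split on $r$ that the direct use of Proposition~\ref{pLieb} avoids.
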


\noindent\begin{proof}Let us denote
\[\Psi(u)=\frac{1}{2}\int_{\mathbb{R}^{N}}\big[W*F(\gamma(u))\big]F(\gamma(u)).\]
Since $W=W_1+W_2$,
\begin{align}\label{I}\Psi(u)&=\frac{1}{2}\int_{\mathbb{R}^{N}}\big[W_1*F(\gamma(u))\big]F(\gamma(u))+\frac{1}{2}\int_{\mathbb{R}^{N}}\big[W_2*F(\gamma(u))\big]F(\gamma(u))\nonumber\\
&=:J_1(u)+J_2(u).\end{align}

Let us suppose that $|\gamma(u)|^\theta\in L^t(\mathbb{R}^{N})$ for some $t\geq 1$.
Then $|\gamma(u)|^2\in L^t(\mathbb{R}^{N})$ and $F(\gamma(u))\in L^t(\mathbb{R}^{N})$ (as consequence of \eqref{boundF}). Application of Proposition \ref{pLieb} yields
\[|J_1(u)|=\left|\frac{1}{2}\int_{\mathbb{R}^{N}}W_1*F(\gamma(u))\,F(\gamma(u))\right|\leq N\,|W_1|_{r}|F(\gamma(u))|_{t}|F(\gamma(u))|_t.\]
Since
$\frac{1}{r}+\frac{2}{t}=2$ implies $t=\frac{2r}{2r-1}$, we have
\begin{align}\label{J1} |J_1(u)| &\leq C|F(\gamma(u))|_{\frac{2r}{2r-1}}|F(\gamma(u))|_{\frac{2r}{2r-1}} \leq  C' ( \left\| u \right\|^2 +\left\| u \right|^\theta)^2 <\infty,
\end{align}
(Observe that, in order to apply the immersion $H^1(\mathbb{R}^{N+1}_+)\hookrightarrow L^q(\mathbb{R}^{N+1}_+)$, we must have $t\theta<2N/(N-1)$, that is,
$r>N/[N(2-\theta)+\theta]$.)

In the case $W_2\in L^\infty(\mathbb{R}^{N})$ we can take $t=1$, therefore
\begin{align}\label{J2}
|J_2(u)|&=\left|\frac{1}{2}\int_{\mathbb{R}^{N}}\big[W_2*F(\gamma(u))\big]F(\gamma(u))\right|\leq C\left(|\gamma(u)|^2_{2}+|\gamma(u)|^\theta_{\theta}\right)^2\nonumber\\
&\leq
C''\left(\|u\|^2+\|u\|^\theta\right)^2.
\end{align}

From \eqref{J2} and \eqref{J1} results the claim.
$\hfill\Box$\end{proof}\vspace*{.3cm}

\begin{lemma}\label{I1+I2}The functional
	\begin{align*}
	I(u) =&\frac{1}{2}\iint_{\mathbb{R}^{N+1}_+}\left(|\nabla u|^2+m^2u^2\right)+\frac{1}{2}\int_{\mathbb{R}^{N}}V(y)[\gamma(u(y))]^2\nonumber\\
	&\quad-\frac{1}{2}\int_{\mathbb{R}^{N}}\big(W*F(\gamma(u))\big)F(\gamma(u))\nonumber\\
	=&:I_1(u)+I_2(u)-\Psi(u)
	\end{align*}
	is well-defined.
\end{lemma}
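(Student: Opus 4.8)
The plan is to show that each of the three pieces $I_1$, $I_2$ and $\Psi$ takes finite values on every $u\in H^1(\mathbb{R}^{N+1}_+)$, since ``well-defined'' here means precisely that the defining integrals are all absolutely convergent. The term $I_1(u)=\frac12\iint_{\mathbb{R}^{N+1}_+}(|\nabla u|^2+m^2u^2)$ is manifestly finite because it is controlled by $\max\{1,m^2\}\|u\|^2$, so no work is needed there.

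For the potential term $I_2(u)=\frac12\int_{\mathbb{R}^{N}}V(y)[\gamma(u)]^2$, I would use that $V$ is bounded: hypothesis $(V_1)$ together with $(V_2)$--$(V_3)$ (or simply the boundedness assumed in the abstract) gives $|V(y)|\le V_0+V_\infty=:C_V$ for all $y$, so
\[
|I_2(u)|\le \tfrac12 C_V\,|\gamma(u)|_2^2 .
\]
By \eqref{p=2} (or \eqref{gammav} with $t=2$) the trace $\gamma(u)$ lies in $L^2(\mathbb{R}^{N})$ with $|\gamma(u)|_2^2\le C\|u\|^2$, hence $I_2(u)$ is finite. One subtlety worth a line: $V$ is only assumed continuous (not $C^1$), but continuity plus the asymptotic value $V_\infty$ from $(V_2)$ already forces global boundedness, so the estimate is legitimate.

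The nonlocal term is the one that actually requires the machinery built up in the Preliminaries. Here I would simply invoke Lemma \ref{estconv}: it states exactly that
\[
\Bigl|\tfrac12\int_{\mathbb{R}^{N}}\bigl(W*F(\gamma(u))\bigr)F(\gamma(u))\Bigr|\le C\bigl(\|u\|^{2}+\|u\|^{\theta}\bigr)^2<\infty
\]
for every $u\in H^1(\mathbb{R}^{N+1}_+)$, which is precisely the statement that $\Psi(u)$ is finite. (Internally that lemma splits $W=W_1+W_2$ and uses the Hardy--Littlewood--Sobolev inequality of Proposition \ref{pLieb} on the $W_1$ part and Remark \ref{obs1} together with the trace embedding \eqref{gammav} on the $W_2$ part; the condition $r>N/[N(2-\theta)+\theta]$ in $(W_h)$ is exactly what makes the relevant exponent $t\theta$ fall below $2^{\#}$.) Combining the three bounds, $I(u)=I_1(u)+I_2(u)-\Psi(u)$ is a finite real number for every $u\in H^1(\mathbb{R}^{N+1}_+)$, which is the assertion.

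I do not expect a genuine obstacle, since the heavy lifting has been isolated into Lemma \ref{estconv}; the only mild care-point is making explicit that the various occurrences of $\gamma(u)$ live in the right Lebesgue spaces—$L^2$ for the potential term and $L^2\cap L^\theta$ for $F(\gamma(u))$ via \eqref{boundF}—so that Corollary \ref{cor}, Lemma \ref{hipW} and Proposition \ref{pLieb} apply verbatim. Beyond bookkeeping of exponents, the proof is a one-paragraph assembly of earlier estimates.
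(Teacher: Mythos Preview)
Your proposal is correct and follows essentially the same route as the paper: bound $I_1$ by $\tfrac12\max\{1,m^2\}\|u\|^2$, use boundedness of $V$ together with the $L^2$ trace estimate for $I_2$, and invoke Lemma~\ref{estconv} for $\Psi$. If anything, you are slightly more careful than the paper in justifying why $V$ is bounded (the paper simply asserts that $(V_1)$ gives $|V(y)|<C$, whereas one really needs $(V_3)$ or $(V_2)$ plus continuity for the upper bound).
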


\noindent\begin{proof}Of course
\begin{align*}
I_1(u)=\frac{1}{2}\iint_{\mathbb{R}^{N+1}_+}\left(|\nabla u|^2+m^2u^2\right)\leq \frac{k}{2}\|u\|^2<\infty,
\end{align*}
if we take $k=\max\{1,m^2\}$. Since hypothesis ($V_1$) implies $|V(y)|<C$, we have
\begin{align*}
|I_2(u)|=\left|\frac{1}{2}\int_{\mathbb{R}^{N}}V(y)[\gamma(u(y))]^2\right|\leq \frac{C}{2}\int_{\mathbb{R}^{N}}|\gamma(u)|^2=C'|\gamma(u)|^2_2\leq C''
\|u\|^2.
\end{align*}
Taking into account Lemma \ref{estconv}, the proof is complete.
$\hfill\Box$\end{proof}\vspace*{.2cm}

Since the derivative of the energy functional is given by
\begin{align}\label{derivative}
I'(u)\cdot \varphi =&\iint_{\mathbb{R}^{N+1}_+}\left[\nabla u\cdot\nabla \varphi+m^2u\varphi\right]+\int_{\mathbb{R}^{N}}V(y)\gamma(u)\gamma(\varphi)\nonumber\\
&\quad-\int_{\mathbb{R}^{N}}\left(W*F(\gamma(u))\right)f(\gamma(u))\gamma(\varphi),\ \forall\ \varphi\in H^1(\mathbb{R}^{N+1}_+),
\end{align}
we see that critical points of $I$ are weak solutions \eqref{P}.

Because we are looking for a positive solution, we suppose that $f(t)=0$ for $t<0$.

\begin{proposition}The quadratic form
	\[u\mapsto \frac{1}{2}\iint_{\mathbb{R}^{N+1}_+}\left(|\nabla u|^2+m^2u^2\right)+\frac{1}{2}\int_{\mathbb{R}^N}V(y)[\gamma(u(y))]^2\]
	defines an norm in the space $H^1(\mathbb{R}^{N+1}_+)$, which is equivalent to the norm $\|\cdot\|$.
\end{proposition}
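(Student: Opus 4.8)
The plan is to exhibit the quadratic form as (half of) the square of a norm coming from an inner product. Write
\[
B(u,v)=\iint_{\mathbb{R}^{N+1}_+}\bigl(\nabla u\cdot\nabla v+m^2uv\bigr)+\int_{\mathbb{R}^{N}}V(y)\gamma(u)\gamma(v),
\]
which is a symmetric bilinear form on $H^1(\mathbb{R}^{N+1}_+)$, and set $\|u\|_V^2:=B(u,u)$, which is twice the displayed quadratic form (the harmless factor $\tfrac12$ only rescales the resulting norm by $\sqrt2$ and does not affect equivalence). The goal is then reduced to proving a two-sided bound $c_0\|u\|^2\le\|u\|_V^2\le C_0\|u\|^2$ with $0<c_0\le C_0$: the upper bound shows $B$ is well defined; the lower bound shows $B$ is positive definite, hence an inner product, so $\|\cdot\|_V$ is a genuine norm (triangle inequality via Cauchy--Schwarz for $B$, homogeneity and definiteness being immediate); and the two inequalities together are exactly the asserted equivalence.

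For the upper bound I would use that $V$ is bounded: by ($V_1$) and ($V_3$), $-V_0\le V(y)\le V_\infty$, so $|V(y)|\le\max\{V_0,V_\infty\}=:C_V$. Combining $\iint_{\mathbb{R}^{N+1}_+}(|\nabla u|^2+m^2u^2)\le\max\{1,m^2\}\|u\|^2$ with the trace estimate \eqref{p=2} (in the form $|\gamma(u)|_2^2\le 2\|u\|_2\|\nabla u\|_2\le\|u\|^2$) yields $\|u\|_V^2\le\bigl(\max\{1,m^2\}+C_V\bigr)\|u\|^2=:C_0\|u\|^2$.

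The only substantive step is the lower bound, and this is where the hypothesis $V_0\in(0,m)$ enters. From $V(y)\ge -V_0$ and \eqref{p=2} with a free parameter $\lambda>0$,
\[
\|u\|_V^2\ge\|\nabla u\|_2^2+m^2\|u\|_2^2-V_0|\gamma(u)|_2^2\ge\Bigl(1-\tfrac{V_0}{\lambda}\Bigr)\|\nabla u\|_2^2+\bigl(m^2-V_0\lambda\bigr)\|u\|_2^2.
\]
Both coefficients are strictly positive precisely when $\lambda\in(V_0,\,m^2/V_0)$, and this interval is nonempty because $V_0<m$ forces $V_0^2<m^2$, i.e. $V_0<m^2/V_0$; for instance $\lambda=m$ gives $1-V_0/m>0$ and $m^2-V_0m=m(m-V_0)>0$. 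With $c_0:=\min\{1-V_0/m,\;m(m-V_0)\}>0$ we obtain $\|u\|_V^2\ge c_0\|u\|^2$, hence $\|u\|_V=0$ iff $u=0$.

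Putting these together, $B$ is an inner product on $H^1(\mathbb{R}^{N+1}_+)$, the quadratic form is (up to the factor $\tfrac12$) the square of the norm $\|\cdot\|_V$ it induces, and $\sqrt{c_0}\,\|u\|\le\|u\|_V\le\sqrt{C_0}\,\|u\|$. I expect the main obstacle to be nothing deep but rather this parameter optimization: one must use \eqref{p=2} to trade the possibly negative potential term $-V_0|\gamma(u)|_2^2$ against a strictly smaller multiple of $\|\nabla u\|_2^2$ plus a controlled multiple of $\|u\|_2^2$, and the strict inequality $V_0<m$ is exactly what makes the required balance possible.
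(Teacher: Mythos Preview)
Your proof is correct and follows essentially the same approach as the paper: both use $V\ge -V_0$ together with the trace inequality \eqref{p=2} with parameter $\lambda=m$ to obtain the lower bound with constant $\min\{1-V_0/m,\;m(m-V_0)\}$, and a straightforward estimate using boundedness of $V$ for the upper bound. Your discussion of the admissible range $\lambda\in(V_0,m^2/V_0)$ before specializing to $\lambda=m$ is a nice clarification of why $V_0<m$ is the sharp condition, but otherwise the arguments coincide.
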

\begin{proof}We keep up with the notation already introduced and note that
$I_2(u)\geq -(1/2)V_0\int_{\mathbb{R}^N}|\gamma(u)|^2$. Furthermore, as consequence of \eqref{p=2}, we have \begin{align}\label{g2a}\int_{\mathbb{R}^{N}}|\gamma(u)|^2\leq m\iint_{\mathbb{R}^{N+1}_+}|u|^2+\frac{1}{m}\iint_{\mathbb{R}^{N+1}_+}|\nabla u|^2.
\end{align}
Therefore,
\begin{align*}
I_1(u)+I_2(u)&\geq \frac{1}{2}\iint_{\mathbb{R}^{N+1}_+}\left(|\nabla u|^2+m^2u\right)-\frac{V_0m}{2}\iint_{\mathbb{R}^{N+1}_+}|u|^2-\frac{V_0}{2m}\iint_{\mathbb{R}^{N+1}_+}|\nabla u|^2\\
	&=\frac{1}{2}\left(1-\frac{V_0}{m}\right)\iint_{\mathbb{R}^{N+1}_+}|\nabla u|^2+\frac{1}{2}m(m-V_0)\iint_{\mathbb{R}^{N+1}_+}|u|^2.
	\end{align*}
	Defining $K=\min\left\{\frac{1}{2}\left(1-\frac{V_0}{m}\right),\frac{1}{2}m(m-V_0)\right\}>0$,
	we conclude that
	\[I_1(u)+I_2(u)\geq K\|u\|^2.\]
By applying \eqref{g2a} it easily follows that
\begin{align}\label{supbound}
I_1(u)+I_2(u)&\leq \frac{1}{2}\left(1+\frac{V_0}{m}\right)\iint_{\mathbb{R}^{N+1}_+}|\nabla u|^2+\frac{1}{2}\left(m^2+V_\infty m\right)\iint_{\mathbb{R}^{N+1}_+}|u|^2\nonumber\\
&\leq C\|u\|^2 \end{align}
for a constant $C>0$. We are done.
$\hfill\Box$\end{proof}

\section{Mountain pass geometry and Nehari manifold}\label{mpg}
\begin{lemma}\label{gpm}
$I$ satisfies the mountain pass theorem geometry. More precisely,
\begin{enumerate}
\item [$(i)$] There exist $\rho,\delta>0$ such that $I|_S\geq \delta>0$ for all $u\in S$, where
\[S=\left\{u\in H^1(\mathbb{R}^{N+1}_+)\,:\, \|u\|=\rho\right\}.\]
\item [$(ii)$] For each $u_0\in H^1(\mathbb{R}^{N+1}_+)$ such that $(u_0)_+\neq 0$, there exists  $\tau\in \mathbb{R}$, satisfying $\|\tau u_0\|>\rho$ and  $I(\tau u_0) <0$.
\end{enumerate}
\end{lemma}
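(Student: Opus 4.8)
The plan is to verify the two standard mountain-pass conditions separately, using the estimates assembled in the Preliminaries section.

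For part $(i)$, I would start from the equivalence of norms established in the preceding Proposition: $I_1(u)+I_2(u)\ge K\|u\|^2$ for a positive constant $K$. It remains to control the Hartree term $\Psi(u)$ near the origin. By Lemma \ref{estconv} we have the bound $|\Psi(u)|\le C(\|u\|^2+\|u\|^\theta)^2$; however this is too crude near $0$ since the leading term is $\|u\|^4$, which does not help against $K\|u\|^2$. Instead I would re-derive a sharper estimate directly: using \eqref{boundF} with a small parameter $\xi$, $F(\gamma(u))\le \xi|\gamma(u)|^2+C_\xi|\gamma(u)|^\theta$, and then applying Proposition \ref{pLieb} (or the Hausdorff–Young inequality together with Corollary \ref{cor}) one gets $|\Psi(u)|\le C\big(\xi\,|\gamma(u)|_{2}^2+C_\xi|\gamma(u)|_{?}^\theta\big)^2$, i.e. a sum of terms each of which is at least quartic, OR of the form $\xi^2\|u\|^4+\dots$. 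The cleanest route is: since every term in $F(\gamma(u))$ carries a factor that is at least quadratic in $\gamma(u)$, one obtains $|\Psi(u)|\le C(\|u\|^4+\|u\|^{2\theta})$ with $2\theta>4>2$, and hence $I(u)\ge K\|u\|^2-C(\|u\|^4+\|u\|^{2\theta})$. Choosing $\rho>0$ small enough so that $C(\rho^2+\rho^{2\theta-2})<K/2$ gives $I(u)\ge (K/2)\rho^2=:\delta>0$ for all $u$ with $\|u\|=\rho$.

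For part $(ii)$, fix $u_0$ with $(u_0)_+\neq 0$ and set $v=u_0$. Since $f\equiv 0$ on $(-\infty,0)$, the nonlinear term only sees $\gamma(u_0)_+$, which is not identically zero, so $\int_{\mathbb{R}^N}\big(W*F(\gamma(u_0)_+)\big)F(\gamma(u_0)_+)>0$ (here I use $W\ge 0$, $W\not\equiv 0$ — more precisely, using the $L^\infty$ or $L^r$ component of $W$ and positivity of $F$ on a set of positive measure). Then for $\tau>0$,
\[
I(\tau u_0)\le C\tau^2\|u_0\|^2-\frac{1}{2}\int_{\mathbb{R}^N}\big(W*F(\tau\gamma(u_0))\big)F(\tau\gamma(u_0)),
\]
and the key point is that $(f_3)$ forces superquadratic growth of $F$: from $2F(t)\le f(t)t$ (Remark \ref{obs1}) one deduces that $t\mapsto F(t)/t^2$ is nondecreasing, hence $F(\tau t)\ge \tau^2 F(t)$ for $\tau\ge 1$, and in fact — combining $(f_3)$ with $(f_1)$ — $F(\tau t)/\tau^2\to\infty$. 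Therefore the convolution term grows strictly faster than $\tau^2$ (a more careful argument: $F(\tau t)\ge c\,\tau^{\theta_0}$ on the set where $\gamma(u_0)\ge$ const, for some $\theta_0>2$), so the negative term dominates and $I(\tau u_0)\to -\infty$ as $\tau\to\infty$. Picking $\tau$ large gives both $\|\tau u_0\|>\rho$ and $I(\tau u_0)<0$.

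The main obstacle is the first part: the only ready-made bound on $\Psi$, Lemma \ref{estconv}, is quartic and a priori useless for smallness, so one must be careful to extract from \eqref{boundF} and the Hardy–Littlewood–Sobolev/Hausdorff–Young estimates that $\Psi(u)=O(\|u\|^4+\|u\|^{2\theta})$ with all exponents exceeding $2$, which is what makes the mountain-pass barrier appear. In the second part the only subtlety is justifying the genuine super-quadratic growth of the doubled-convolution energy from $(f_3)$; this is routine once one notes $F(t)/t^2$ is monotone and, by $(f_1)$, tends to $0$ at the origin while, by $(f_3)$ again, is unbounded, so $F(\tau t)\tau^{-2}\to\infty$ pointwise on the support of $(\gamma(u_0))_+$, and Fatou gives the divergence of $\Psi(\tau u_0)/\tau^2$.
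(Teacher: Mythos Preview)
Your proof is essentially correct and follows the paper's strategy, but two points deserve correction.

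For part $(i)$, you dismiss Lemma \ref{estconv} as ``too crude'' because its leading term is $\|u\|^4$; this is a slip --- near the origin $\|u\|^4=o(\|u\|^2)$, so the inequality
\[
I(u)\ge K\|u\|^2-C\bigl(\|u\|^2+\|u\|^\theta\bigr)^2
\]
coming straight from Lemma \ref{estconv} already gives the barrier for $\rho$ small. This is precisely what the paper does, in one line. Your ``sharper'' estimate $|\Psi(u)|\le C(\|u\|^4+\|u\|^{2\theta})$ is equivalent to $(\|u\|^2+\|u\|^\theta)^2$ up to constants, so you have re-derived the same bound after an unnecessary detour.

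For part $(ii)$, your approach via the monotonicity of $t\mapsto F(t)/t^2$ is fine and is equivalent to the paper's: the paper sets $g(t)=\Psi(tu_0/\|u_0\|)$, obtains the differential inequality $g'(t)\ge (4/t)g(t)$ from the Ambrosetti--Rabinowitz condition, and integrates to get $\Psi(\tau u_0)\ge D(\tau\|u_0\|)^4$. Your pointwise inequality $F(\tau t)\ge \tau^2 F(t)$ for $\tau\ge 1$, inserted twice into the convolution (using $W\ge 0$), yields the same $\tau^4$ lower bound on $\Psi$. However, your stronger claim that $F(t)/t^2$ is unbounded does \emph{not} follow from $(f_1)$--$(f_3)$: take for instance $f(t)=t\arctan t$, which satisfies all three hypotheses yet has $F(t)/t^2\to\pi/4$. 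Fortunately you do not need this --- the quartic lower bound already forces $I(\tau u_0)\le C\tau^2\|u_0\|^2-D\tau^4\|u_0\|^4\to-\infty$, so the Fatou argument is superfluous.
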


\noindent \begin{proof}
%
%
Since we have already showed that
\begin{align}\label{I+}I_1(u)+I_2(u)\geq K\|u\|^2\end{align}
and so $I(u)\geq K\|u\|^2-\Psi(u)\geq K\|u\|^2-C\left(\|u\|^2+\|u\|^\theta\right)^2$, we obtain ($i$) by choosing $\rho>0$ small enough.

In order to prove ($ii$), fix $u_0\in H^1(\mathbb{R}^{N+1}_+)\setminus\{0\}$ such that $u_0\geq 0$. For all $t>0$ consider the function $g_{u_0}\colon(0,\infty)\to\mathbb{R}$ defined by
\[g_{u_0}(t)=\Psi\left(\frac{tu_0}{\|u_0\|}\right)\]
where, as before,
\[\Psi(u)=\frac{1}{2}\int_{\mathbb{R}^{N}}\big(W*F(\gamma(u))\big)F(\gamma(u)).\]

An easy calculation shows that
\begin{align*}
g'_{u_0}(t)&=\frac{2}{t}\int_{\mathbb{R}^{N}}
\left(W*F\left(\gamma\left(\frac{tu_0}{\|u_0\|}\right)\right)\right)\frac{f}{2}\left(\gamma\left(\frac{tu_0}{\|u_0\|}\right)\right)\gamma\left(\frac{tu_0}{\|u_0\|}\right)
\geq\frac{4}{t}g_{u_0}(t),\end{align*}
the last inequality being a consequence of the Ambrosetti-Rabinowitz inequality. Observe that $g'_{u_0}(t)>0$ for $t>0$.

Thus, we obtain
\begin{align*}
\ln g_{u_0}(t)\Big|_1^{\tau\|u_0\|}\geq 4\ln t\Big|_1^{\tau\|u_0\|}\quad\Rightarrow\quad \frac{g_{u_0}(\tau\|u_0\|)}{g_{u_0}(1)}\geq \left(\tau\|u_0\|\right)^{4},
\end{align*}
proving that
\begin{align}\label{H}
\Psi(\tau u_0)=g_{u_0}(\tau\|u_0\|)\geq D\left(\tau\|u_0\|\right)^{4}.\end{align}
for a constant $D>0$.

It follows from \eqref{supbound} that
\begin{align*}I(\tau u_0)&\leq C\tau^2\|u_0\|^2-D\tau^4\|u_0\|^4.
\end{align*}
Thus, it suffices to take $\tau$ large enough.
$\hfill\Box$\end{proof}\vspace*{.4cm}

The existence of a Palais-Smale sequence $(u_n)\subset H^1(\mathbb{R}^{N+1}_+)$ such that
\[I'(u_n)\to 0\qquad\textrm{and}\qquad I(u_n)\to c,\]
where
\[c=\inf_{\alpha\in \Gamma}\max_{t\in [0,1]}I(\alpha(t)),\]
and $\Gamma=\left\{\alpha\in C^1\left([0,1],H^1(\mathbb{R}^{N+1}_+)\right)\,:\,\alpha(0)=0,\,\alpha(1)<0\right\}$ results from the mountain pass theorem without the PS condition. \vspace*{.2cm}

We now consider the Nehari manifold
\begin{align*}\mathcal{N}&=\left\{u\in H^1(\mathbb{R}^{N+1}_+)\setminus\{0\}\,:\,I'(u)\cdot u=0\right\}.
\end{align*}
It is not difficult to see that $\mathcal{N}$ is a manifold in $H^1(\mathbb{R}^{N+1}_+)\setminus\{0\}$.

The next result, which follows immediately from our estimates, proves that $\mathcal{N}$ is a closed manifold in $H^1(\mathbb{R}^{N+1}_+)$:
\begin{lemma}\label{lN}
There exists $\beta>0$ such that $\|u\|\geq \beta$ for all $u\in \mathcal{N}$.
\end{lemma}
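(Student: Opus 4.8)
The plan is to show that on the Nehari manifold the nonlinear term $\Psi$ dominates, which combined with the quadratic lower bound forces $\|u\|$ to stay away from zero. First I would observe that for $u\in\mathcal N$ the defining identity $I'(u)\cdot u=0$, together with the expression \eqref{derivative} and the coercivity estimate \eqref{I+}, gives
\[
2K\|u\|^2\le I_1(u)\cdot\text{(twice)}+\ldots=\int_{\mathbb R^N}\big(W*F(\gamma(u))\big)f(\gamma(u))\gamma(u),
\]
more precisely $2\big(I_1(u)+I_2(u)\big)=\int_{\mathbb R^N}(W*F(\gamma(u)))f(\gamma(u))\gamma(u)$, and the left-hand side is $\ge 2K\|u\|^2$ by the proposition preceding Section~\ref{mpg}. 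So it suffices to bound the right-hand side from above by a quantity that is $o(\|u\|^2)$ as $\|u\|\to 0$.

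Next I would estimate the integral $\int_{\mathbb R^N}(W*F(\gamma(u)))f(\gamma(u))\gamma(u)$ exactly as in Lemma~\ref{estconv}, but with $F(\gamma(u))$ replaced in one factor by $f(\gamma(u))\gamma(u)$. Using \eqref{boundf} and \eqref{boundF} one has the pointwise bounds $|F(t)|\le C(t^2+t^\theta)$ and $|f(t)t|\le C(t^2+t^\theta)$, so the same splitting $W=W_1+W_2$ and the same applications of the Hardy--Littlewood--Sobolev inequality (Proposition~\ref{pLieb}) for the $W_1$ part and Young's inequality for the $W_2$ part yield
\[
\left|\int_{\mathbb R^N}\big(W*F(\gamma(u))\big)f(\gamma(u))\gamma(u)\right|\le C\big(\|u\|^2+\|u\|^\theta\big)^2,
\]
by the continuity of the trace embeddings \eqref{gammav} and the immersions \eqref{immersions}. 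Combining the two displays gives $2K\|u\|^2\le C(\|u\|^2+\|u\|^\theta)^2$ for every $u\in\mathcal N$.

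Finally I would divide by $\|u\|^2$ (legitimate since $u\neq 0$ on $\mathcal N$) to get
\[
2K\le C\,\|u\|^2\big(1+\|u\|^{\theta-2}\big)^2,
\]
and since $\theta>2$ the right-hand side tends to $0$ as $\|u\|\to 0$; hence there is $\beta>0$ with $\|u\|\ge\beta$ for all $u\in\mathcal N$. The only mildly delicate point is making sure the estimate of $\int(W*F(\gamma(u)))f(\gamma(u))\gamma(u)$ really does reproduce the $(\|u\|^2+\|u\|^\theta)^2$ bound rather than something worse: this is where one must check that the exponents coming out of Proposition~\ref{pLieb} (namely $t=\tfrac{2r}{2r-1}$ for the $W_1$ term) still satisfy $t\theta<2N/(N-1)$, which is exactly the condition $r>N/[N(2-\theta)+\theta]$ imposed in $(W_h)$ — so the hypothesis is precisely what is needed and no new obstacle arises.
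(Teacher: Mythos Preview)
Your proof is correct and is precisely the argument the paper has in mind: the paper does not give a proof at all, stating only that the result ``follows immediately from our estimates,'' and the estimates it refers to are exactly \eqref{I+} and the bound of Lemma~\ref{estconv} (adapted, as you note, to $f(\gamma(u))\gamma(u)$ in place of $F(\gamma(u))$). Your verification that the exponent condition from $(W_h)$ is what makes the Hardy--Littlewood--Sobolev step go through is also accurate and matches the parenthetical remark in the proof of Lemma~\ref{estconv}.
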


An alternative characterization of $c$ is obtained by a standard method: for $u_+\neq 0$, consider the function $\Phi(t)=I_1(tu)+I_2(tu)-\Psi(tu)$, preserving the notation of Lemma \ref{gpm}. The proof of Lemma \ref{gpm} assures that $\Psi(tu)>0$ for $t$ small enough, $\Psi(tu)<0$ for $t$ large enough and $g'_u(t)>0$ if $t>0$. Therefore, $\max_{t\geq 0}\Psi(t)$ is achieved at a unique $t_u=t(u)>0$ and $\Psi'(tu)>0$ for $t<t_u$ and $\Psi'(tu)<0$ for $t>t_u$. Furthermore, $\Psi'(t_uu)=0$ implies that $t_uu\in \mathcal{N}$.

The map $u\mapsto t_u$ ($u\neq 0$) is continuous and $c=c^*$, where
\[c^*=\inf_{u\in H^1(\mathbb{R}^{N+1}_+)\setminus\{0\}}\max_{t\geq 0} I(tu).\] For details, see \cite[Section 3]{Rabinowitz} or \cite{Felmer}.

Standard arguments prove the next affirmative:
\begin{lemma}\label{bounded}
Let $(u_n)\subset H^1(\mathbb{R}^{N+1}_+)$ be a sequence such that $I(u_n)\to c$ and $I'(u_n)\to 0$, where
\[c=\inf_{u\in H^1(\mathbb{R}^{N+1}_+)\setminus\{0\}}\max_{t\geq 0}I(tu).\]
Then $(u_n)$ is bounded and (for a subsequence) $u_n\rightharpoonup u$ in $H^1(\mathbb{R}^{N+1}_+)$.
\end{lemma}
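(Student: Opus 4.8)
The plan is to run the by-now-standard boundedness argument for superquadratic functionals, exploiting the fact that the Hartree nonlinearity $\Psi$ behaves like a $4$-homogeneous term together with the Ambrosetti--Rabinowitz inequality $2F(t)\le f(t)t$ recorded in Remark \ref{obs1}. Concretely, I would fix $\mu=4$ and examine the combination $I(u_n)-\tfrac14 I'(u_n)\cdot u_n$. Using the decomposition $I=I_1+I_2-\Psi$, the expression \eqref{derivative} for $I'$, and the observation that the quadratic part of $I'(u)\cdot u$ equals $2\big(I_1(u)+I_2(u)\big)$, a direct computation gives the identity
\[
I(u)-\tfrac14 I'(u)\cdot u=\tfrac12\big(I_1(u)+I_2(u)\big)+\tfrac14\int_{\mathbb{R}^{N}}\big(W*F(\gamma(u))\big)\big[f(\gamma(u))\gamma(u)-2F(\gamma(u))\big].
\]

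Next I would argue that the convolution term on the right is non-negative: by $(W_h)$ we have $W\ge 0$, and since $f\ge 0$ on $[0,\infty)$ and $f\equiv 0$ on $(-\infty,0)$ (as assumed just before this section), $F\ge 0$ everywhere, hence $W*F(\gamma(u))\ge 0$; moreover the Ambrosetti--Rabinowitz inequality of Remark \ref{obs1} gives $f(t)t-2F(t)\ge 0$ for all $t$. Combining this with the coercivity estimate $I_1(u)+I_2(u)\ge K\|u\|^2$ established before Section \ref{mpg}, we obtain
\[
I(u_n)-\tfrac14 I'(u_n)\cdot u_n\ \ge\ \tfrac{K}{2}\|u_n\|^2.
\]

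On the other hand, since $I(u_n)\to c$ and $\|I'(u_n)\|\to 0$, for $n$ large we have $I(u_n)\le c+1$ and $|I'(u_n)\cdot u_n|\le\varepsilon_n\|u_n\|$ with $\varepsilon_n\to 0$, so $\tfrac{K}{2}\|u_n\|^2\le c+1+\tfrac14\varepsilon_n\|u_n\|$; this quadratic inequality forces $(\|u_n\|)$ to be bounded. Finally, $H^1(\mathbb{R}^{N+1}_+)$ being a Hilbert space (hence reflexive), a bounded sequence admits a weakly convergent subsequence $u_n\rightharpoonup u$, which is the assertion. The computation of the identity and the sign analysis are entirely routine; the only point deserving a word of care is that all integrals appearing are finite, which is exactly what Lemmas \ref{estconv} and \ref{I1+I2} (and the $L^p$-membership of $F(\gamma(u))$ and $f(\gamma(u))\gamma(u)$) guarantee — so I do not expect any real obstacle here, consistent with the paper calling the statement "standard".
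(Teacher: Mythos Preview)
Your argument is correct and is precisely the ``standard argument'' the paper invokes without writing out: combine $I(u_n)-\tfrac14 I'(u_n)\cdot u_n$, use the Ambrosetti--Rabinowitz inequality from Remark~\ref{obs1} together with $W\ge 0$, $F\ge 0$ to drop the nonlocal term, and then the coercivity $I_1+I_2\ge K\|u\|^2$ from \eqref{I+} yields boundedness; weak compactness in the Hilbert space $H^1(\mathbb{R}^{N+1}_+)$ does the rest. There is nothing to add.
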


\begin{lemma}\label{lK}
Let $U\subseteqq \mathbb{R}^{N}$ be any open set. For $1<p<\infty$, let $(f_n)$ be a bounded sequence in $L^p(U)$ such that $f_n(x)\to f(x)$ a.e. Then $f_n\rightharpoonup f$.
\end{lemma}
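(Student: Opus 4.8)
The plan is to invoke Fatou's lemma to place the a.e.\ limit $f$ in $L^p(U)$, and then to use reflexivity of $L^p(U)$ together with a uniform-integrability argument to identify every weak subsequential limit of $(f_n)$ with $f$. First I would apply Fatou's lemma to the sequence $|f_n|^p$, obtaining $\int_U|f|^p\le\liminf_n\int_U|f_n|^p\le M^p$, where $M:=\sup_n|f_n|_{L^p(U)}<\infty$; hence $f\in L^p(U)$ and the statement $f_n\rightharpoonup f$ is meaningful.

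Next I would fix $\varphi\in L^{p'}(U)$ with $p'=p/(p-1)$, so that the goal becomes $\int_U f_n\varphi\to\int_U f\varphi$. I would argue via the subsequence principle for real sequences: it suffices to show that every subsequence of $(f_n)$ admits a further subsequence $(f_{n_k})$ with $\int_U f_{n_k}\varphi\to\int_U f\varphi$. Since $(f_n)$ is bounded in the reflexive space $L^p(U)$ (this is where $1<p<\infty$ enters), such a subsequence can be extracted with $f_{n_k}\rightharpoonup g$ for some $g\in L^p(U)$, and the task reduces to showing $g=f$ a.e.

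To identify $g$, I would test against characteristic functions of finite-measure sets: for measurable $E\subseteq U$ with $|E|<\infty$ one has $\chi_E\in L^{p'}(U)$, so weak convergence gives $\int_E f_{n_k}\to\int_E g$. On the other hand, Hölder's inequality yields $\int_A|f_{n_k}|\le M|A|^{1/p'}$ for every measurable $A\subseteq E$, so the family $(f_{n_k}|_E)$ is uniformly integrable; combined with $f_{n_k}\to f$ a.e.\ on the finite-measure set $E$, Vitali's convergence theorem gives $f_{n_k}\to f$ in $L^1(E)$, hence $\int_E f_{n_k}\to\int_E f$. Therefore $\int_E g=\int_E f$ for every $E$ of finite measure, and since $U\subseteq\mathbb{R}^N$ is open, hence $\sigma$-finite, this forces $g=f$ a.e.\ in $U$. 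Then $\int_U f_{n_k}\varphi\to\int_U g\varphi=\int_U f\varphi$, which closes the subsequence argument and proves the lemma.

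The only point requiring care — rather than a genuine obstacle — is that pointwise a.e.\ convergence together with an $L^p$-bound upgrades to $L^1$-convergence only on sets of \emph{finite} measure, so the identification of $g$ must be localized to such sets and then globalized using $\sigma$-finiteness of $U$ and the integrability of $\varphi$ near infinity; this localization, and the uniform integrability coming from the Hölder bound, are exactly where the hypothesis $1<p<\infty$ is used.
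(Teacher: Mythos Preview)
Your argument is correct. Fatou places $f$ in $L^p(U)$; reflexivity of $L^p(U)$ for $1<p<\infty$ gives weakly convergent subsequences; Vitali on finite-measure sets identifies every weak subsequential limit with $f$; and the subsequence principle closes the proof. The localization to finite-measure sets followed by globalization via $\sigma$-finiteness is handled cleanly.

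As for comparison: the paper does not supply its own proof of this lemma. It simply records the statement and refers the reader to \cite[Lemme~4.8, Chapitre~1]{Kavian}. Your write-up therefore goes beyond what the paper provides, giving a complete self-contained argument. The route you chose (reflexivity plus identification of the weak limit through uniform integrability) is one of the two standard approaches; the other, more common in textbook presentations, tests against a dense class such as $C_c(U)$ and uses Egorov's theorem on compact supports together with the H\"older bound on the exceptional set. Both are short; yours has the mild advantage of avoiding any density or approximation step at the cost of invoking Eberlein--\v{S}mulian implicitly through sequential weak compactness in reflexive spaces.
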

The proof of Lemma \ref{lK} can be found, e.g., in \cite[Lemme 4.8, Chapitre 1]{Kavian}.\vspace*{.2cm}

\section{The limit problem}
In this section we consider a variant of problem \eqref{P}, changing the potential $V(y)$ for $V_\infty$.
\begin{theorem} \label{teo ground state}
	Assuming $(f_1)$, $(f_2)$, $(f_3)$ and $(W_h)$, problem
	\begin{equation}\left\{\begin{array}{l}
	-\Delta u +m^2u=0 \ \ \text{in} \ \ \mathbb{R}^{N+1}_+ \\ \\
	\displaystyle-\frac{\partial u}{\partial x}= -V_\infty u+ \left[W*F(u)\right]f(u), \ (x,y)\in \left\{ 0\right\} \times \mathbb{R}^N \simeq\mathbb{R}^N,
	\end{array}\right.\tag{$P_\infty$}\label{Pinfty}
	\end{equation}
	has a non-negative ground state solution.
\end{theorem}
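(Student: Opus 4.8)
The plan is to reproduce, for the autonomous problem $(P_\infty)$, the variational scheme developed in Sections 2--3 for $(P)$, exploiting the extra translation invariance to recover compactness. Let $I_\infty$ denote the energy functional associated with $(P_\infty)$, obtained from $I$ by replacing $V(y)$ with the constant $V_\infty$; since $0<V_0<m$ and, by $(V_2)$--$(V_3)$, $0<V_\infty$, the estimate $(V_1)$-type bound $V_\infty\geq -V_0$ (indeed $V_\infty>0>-V_0$) still holds, so all the estimates of Section 2 apply verbatim: $I_\infty$ is well-defined on $H^1(\mathbb{R}^{N+1}_+)$ (Lemmas \ref{estconv}, \ref{I1+I2}), the quadratic part $I_{\infty,1}+I_{\infty,2}$ is equivalent to $\|\cdot\|^2$, and $I_\infty$ has the mountain-pass geometry of Lemma \ref{gpm}. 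Define the associated Nehari manifold $\mathcal{N}_\infty$ and the minimax levels
\[
c_\infty=\inf_{\alpha\in\Gamma_\infty}\max_{t\in[0,1]}I_\infty(\alpha(t))=\inf_{u\neq 0,\,u_+\neq 0}\max_{t\geq 0}I_\infty(tu)=\inf_{u\in\mathcal{N}_\infty}I_\infty(u),
\]
the equalities following from the same fibering argument used for $c=c^*$. By the mountain pass theorem without (PS), there is a Palais--Smale sequence $(u_n)$ at level $c_\infty$, and by Lemma \ref{bounded} (whose proof only uses the Ambrosetti--Rabinowitz inequality from Remark \ref{obs1} and the norm equivalence) $(u_n)$ is bounded, so $u_n\rightharpoonup u$ in $H^1(\mathbb{R}^{N+1}_+)$ for a subsequence.

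Next I would show that, after a suitable translation, the weak limit is nontrivial and is a critical point. First, a vanishing alternative: if $\sup_{z\in\mathbb{R}^N}\int_{B_1(z)}|\gamma(u_n)|^2\to 0$, then by a Lions-type lemma $\gamma(u_n)\to 0$ in $L^q(\mathbb{R}^N)$ for every $q\in(2,2^{\#})$; combined with the growth bounds \eqref{boundf}--\eqref{boundF} and the Hardy--Littlewood--Sobolev estimates of Lemma \ref{estconv}, this forces $\Psi(u_n)\to 0$ and $\int(W*F(\gamma(u_n)))f(\gamma(u_n))\gamma(u_n)\to 0$, whence from $I_\infty'(u_n)\cdot u_n\to 0$ and the norm equivalence we get $\|u_n\|\to 0$, contradicting Lemma \ref{lN} (which gives $\|u_n\|\geq\beta>0$ on $\mathcal{N}_\infty$, and $(u_n)$ is asymptotically on $\mathcal{N}_\infty$). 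Hence there exist $\delta>0$ and $z_n\in\mathbb{R}^N$ with $\int_{B_1(z_n)}|\gamma(u_n)|^2\geq\delta$. Set $\tilde u_n(x,y)=u_n(x,y+z_n)$; by translation invariance of $I_\infty$ (this is exactly where autonomy is used — it fails for $I$), $(\tilde u_n)$ is still a bounded (PS) sequence at level $c_\infty$, $\tilde u_n\rightharpoonup \tilde u$, and the local compactness of the trace embedding (Theorem \ref{immersionW} applied on balls, together with $\gamma(\tilde u_n)\to\gamma(\tilde u)$ a.e. up to a subsequence) yields $\tilde u\neq 0$.

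Then I would pass to the limit in $I_\infty'(\tilde u_n)\cdot\varphi\to I_\infty'(\tilde u)\cdot\varphi$ for all $\varphi\in C_0^\infty(\overline{\mathbb{R}^{N+1}_+})$: the linear terms pass by weak convergence, and for the nonlocal term one uses $\gamma(\tilde u_n)\to\gamma(\tilde u)$ in $L^q_{loc}$ and a.e., the uniform bounds $|W*F(\gamma(\tilde u_n))|\leq C+g_n$ with $g_n$ bounded in $L^{2N/[N(2-\theta)+\theta]}$ (Corollary \ref{cor}), and Lemma \ref{lK} to get weak convergence of $f(\gamma(\tilde u_n))\gamma(\varphi)$ together with strong convergence of the convolution factor on the support of $\varphi$. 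Thus $I_\infty'(\tilde u)=0$, so $\tilde u\in\mathcal{N}_\infty$ and $I_\infty(\tilde u)\geq c_\infty$. For the reverse inequality, by the Ambrosetti--Rabinowitz inequality the functional on $\mathcal{N}_\infty$ can be written as $I_\infty(v)=I_\infty(v)-\tfrac12 I_\infty'(v)\cdot v=\int(\tfrac12 f(\gamma v)\gamma v-F(\gamma v))(W*F(\gamma v))\,\geq 0$ plus the nonnegative quadratic remainder, an expression that is weakly lower semicontinuous (Fatou on the a.e.-convergent integrand, using monotonicity from $(f_3)$), giving $I_\infty(\tilde u)\leq\liminf I_\infty(\tilde u_n)=c_\infty$. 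Hence $I_\infty(\tilde u)=c_\infty$ and $\tilde u$ is a ground state. Finally, nonnegativity: since $f(t)=0$ for $t<0$, testing $I_\infty'(\tilde u)$ with $\varphi=\tilde u_-$ and using $(V_2)$, $V_\infty>0$, together with the norm equivalence shows $\tilde u_-=0$, so $\tilde u\geq 0$. The main obstacle is the compactness step: ruling out vanishing and, above all, establishing the weak lower semicontinuity / strong convergence needed to show $I_\infty(\tilde u)\leq c_\infty$ for the nonlocal Hartree term without the homogeneity that was available in \cite{Cingolani} — this is handled by the a.e.-convergence plus uniform $L^p$-bound package from Lemmas \ref{hipW}--\ref{c2} and Corollary \ref{cor}, combined with Fatou.
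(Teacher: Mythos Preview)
Your overall strategy coincides with the paper's: obtain a bounded Palais--Smale sequence at level $c_\infty$, rule out vanishing of the traces via Lions' lemma, translate using the autonomy of $I_\infty$, and show the nontrivial weak limit is a nonnegative critical point. The technical execution differs in two places. To prove that the limit $v$ is critical, the paper does not pass to the limit directly in $I'_\infty(v_n)\cdot\varphi$ as you propose; instead it tests with $\psi_n=(v_n-v)\varphi$ and invokes \cite[Lemma~3.5]{Ackermann} to kill the nonlocal term $J_4$, extracting from $J_1$ local strong (hence a.e.) convergence of $\nabla v_n$, and concludes $I'_\infty(v)\cdot v=0$ from there. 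More notably, the paper's proof stops at $v\in\mathcal N_\infty$ and does not verify $I_\infty(v)=c_\infty$; your Fatou step on the nonnegative quantity $I_\infty(\tilde u_n)-\tfrac12 I'_\infty(\tilde u_n)\cdot\tilde u_n$ --- which, written as a double integral against $W(y-y')\geq 0$, has an a.e.\ convergent nonnegative integrand once $\gamma(\tilde u_n)\to\gamma(\tilde u)$ a.e. --- is a genuine addition that actually closes the ground-state claim. One caveat: your phrase ``strong convergence of the convolution factor on the support of $\varphi$'' is not quite accurate, since $W*F(\gamma(\tilde u_n))$ depends on $\gamma(\tilde u_n)$ globally; the clean way is to pair the weak limit $F(\gamma(\tilde u_n))\rightharpoonup F(\gamma(\tilde u))$ (Lemma~\ref{lK}) against the strongly convergent $W*[f(\gamma(\tilde u_n))\gamma(\varphi)]$, which works because $\gamma(\varphi)$ has compact support.
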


\begin{proof}Let $(u_n)$ be the minimizing sequence given by Lemma \ref{gpm}. Then, there exist $R,\delta>0$ and a sequence $(z_n)\subset\mathbb{R}^{N}$ such that \begin{equation}\label{Lions}\liminf_{n\to\infty}\int_{B_R(z_n)}|\gamma(u_n)|^2\geq \delta.\end{equation}
If false, a result of Lions (see \cite{CC}) guarantees that $\gamma(u_n)\to 0$ in $L^q(\mathbb{R}^{N})$ for $2<q<2^*$, thus implying that
\[\int_{\mathbb{R}^N} (W*F(\gamma(u_n)))f(\gamma(u_n)) \gamma(u_n)  \to 0,\] contradicting Lemma \ref{lN}.
	
We define
\[v_n(x)=u_n(x-z_n).\]
From \eqref{Lions} we derive that
\[\int_{B_R(0)}|\gamma(v_n)|^2\geq \frac{\delta}{2}.\]
We observe that the energy functional
\begin{align*}I_\infty(u) &= \frac{1}{2}  \iint_{\mathbb{R}^{N+1}_+} \left(|\nabla u|^2 + m^2u^2\right) +\frac{1}{2}\int_{\mathbb{R}^N} V_\infty|\gamma(u)|^2\\
&\qquad-\frac{1}{2}\int_{\mathbb{R}^N}\big[W*F(\gamma(u))\big]F(\gamma(u))
\end{align*}
and its derivative as well are translation invariant. Therefore, it also holds that
\[I_\infty'(v_n) \to 0\quad\textrm{ and }\quad I_\infty(v_n) \to c_\infty,\]
where
\[c_\infty=\inf_{u\in H^1(\mathbb{R}^{N+1}_+)\setminus\{0\}}\max_{t\geq 0} I_\infty(tu).\]
(Observe that all reasoning in Section \ref{mpg} is valid for $I_\infty$ and its minimizing sequence.)	
	
Since $(v_n)$ is bounded (see Lemma \ref{bounded}) it follows that $v_n\rightharpoonup v$. A standard argument shows that we can suppose $v_n(x)\to v(x)$ a.e. in  $(\mathbb{R}^{N+1}_+)$, $v_n\to v$ in $L^s_{loc}(\mathbb{R}^{N+1}_+)$ for all $s\in [2,2^*)$, $\gamma(v_n(x))\to \gamma(v(x))$ a.e. in $(\mathbb{R}^{N})$
and $\gamma(v_n)\to \gamma(v)$ in $L^q_{loc}(\mathbb{R}^{N})$, for all $q\in [p,p^{\#})$.

We will show that $v\in \mathcal{N}_\infty=\{u\in H^1(\mathbb{R}^{N+1_+})\setminus\{0\}\,:\,I'_\infty(u)\cdot u=0\}$.

For all $\varphi \in C^\infty_0(\mathbb{R}^{N+1}_+)$, let us consider $\psi_n=(v_n - v)\varphi\in H^1(\mathbb{R}^{N+1}_+)$. We have
\begin{align}\label{testfunction}
\langle I'_\infty(v_n) , \psi_n\rangle&= \iint_{\mathbb{R}^{N+1}_+} \nabla v_n \cdot \nabla\psi_n+ \iint_{\mathbb{R}^{N+1}_+}m^2v_n\psi_n+ \int_{\mathbb{R}^N} V_\infty\gamma(v_n) \gamma(\psi_n) \nonumber \\
{}&\qquad-\int_{\mathbb{R}^N} ( W * F(\gamma(v_n) ) f(\gamma(v_n)) \gamma(\psi_n)\nonumber\\
&=J_1+J_2+J_3-J_4.
\end{align}
We start considering
\begin{align*}
J_4=\int_{\mathbb{R}^N} ( W * F(\gamma(v_n) ) f(\gamma(v_n)) \gamma(\psi_n).
\end{align*}
Because $\displaystyle\lim_{n\to\infty}\langle I'_\infty(v_n) , (v_n - v)\varphi \rangle=0$, it follows from \cite[Lemma 3.5]{Ackermann} that $J_4\to 0$ when $n\to\infty$ and thus is easily verified that $J_2+J_3-J_4\to 0$ when $n\to\infty$. We now consider $J_1$:
\begin{align*}
J_1&=\iint_{\mathbb{R}^{N+1}_+} \nabla v_n \cdot \nabla ( (v_n-v)\varphi)\\
&=\iint_{\mathbb{R}^{N+1}_+} \nabla v_n \cdot  \varphi\nabla (v_n - v) +
\iint_{\mathbb{R}^{N+1}_+} \nabla v_n\cdot(v_n-v) \nabla \varphi\\
&=\iint_{\mathbb{R}^{N+1}_+}|\nabla(v_n-v)|^2\varphi+\varphi \nabla v\cdot \nabla(v_n-v)+\nabla v_n\cdot(v_n-v) \nabla \varphi.
\end{align*}
We infer that
\begin{align*}\lim_{n\to\infty}\iint_{\mathbb{R}^{N+1}_+} |\nabla(v_n-v)|^2 \varphi &=-\lim_{n\to\infty} \iint_{\mathbb{R}^{N+1}_+}
\varphi\nabla v \cdot\nabla(v_n-v) \\
&\qquad -  \lim_{n\to\infty}\iint_{\mathbb{R}^{N+1}_+} (v_n-v)\nabla v_n \cdot\nabla
\varphi.\end{align*}
Since
\[\lim_{n\to\infty} \iint_{\mathbb{R}^{N+1}_+} \varphi\nabla v\cdot \nabla( v_n - v) =0\ \text{ and }\
\lim_{n\to\infty}\iint_{\mathbb{R}^{N+1}_+} (v_n - v)\nabla v_n \cdot \nabla \varphi =0\]
(because $\nabla v_n$ is bounded), we deduce that
\[ \nabla v_n \rightarrow \nabla v \quad \ \mbox{a.e. in} \quad \mathbb{R}^{N+1}_+.\]
Thus
\[I'_\infty(v)v =0\]
and $v \in \mathcal{N}_\infty$.

We now turn our attention to the positivity of $v$. Seeing that
\[\iint_{\mathbb{R}^{N+1}_+}\left(\nabla v\cdot \nabla\varphi+m^2v\varphi\right)+\!\int_{\mathbb{R}^{N}}V_\infty\gamma(v)\gamma(\varphi)=\!\int_{\mathbb{R}^{N}}[W*F(\gamma(v))]f(\gamma(v))\gamma(\varphi)\]
and choosing $\varphi=v^-$, the left-hand side of the equality is positive (by the definition of $I_\infty$ and equation \eqref{I+} applied to $I_\infty$), since $J_1+J_2+J_3=I_1+I_2\geq K\|v\|^2$),  while $\Psi(v)=J_4\leq 0$. We are done.
$\hfill\Box$\end{proof}

\section{Proof of Theorem \ref{t1}}

In order to consider the general case of the potential $V(y)$, we state a well-known result due to M. Struwe:
\begin{lemma}[Splitting Lemma]\label{Struwe} Let $(v_n)\subset H^1(\mathbb{R}^{N+1}_+)$ be such that
	\[I(u_n)\to c,\qquad I'(u_n)\to 0\]
and $u_n\rightharpoonup u$ weakly on $X$. Then $I'(u_0)=0$ and we have \emph{either}
\begin{enumerate}
	\item [($i$)] $u_n\to u$ strongly on $X$;
	\item [($ii$)] there exist $k\in\mathbb{N}$, $(y^j_n)\in\mathbb{R}^N$ such that $|y^j_n|\to\infty$ for $j\in \{1,\ldots,k\}$ and nontrivial solutions $u^1,\ldots,u^k$ of problem \eqref{Pinfty} so that
	\[I(u_n)\to I(u_0)+\sum_{j=1}^k I_\infty (u_j)\]
	and
	\[\left\|u_n-u_0-\sum_{j=1}^ku^j(\cdot-y^j_n)\right\|\to 0.\]
\end{enumerate}
	
\end{lemma}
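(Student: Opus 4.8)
The plan is to follow the now-classical concentration-compactness / Brezis–Lieb iteration scheme due to Struwe, adapted to the half-space extension setting and the nonlocal Hartree nonlinearity. First I would establish that the weak limit $u_0$ is a critical point of $I$: since $(u_n)$ is bounded (Lemma \ref{bounded}), we may pass to a subsequence with $u_n\rightharpoonup u_0$, $\gamma(u_n)\to\gamma(u_0)$ a.e. on $\mathbb{R}^N$ and in $L^q_{\mathrm{loc}}$ for the admissible exponents, and $u_n\to u_0$ in $L^s_{\mathrm{loc}}(\mathbb{R}^{N+1}_+)$. Testing $I'(u_n)$ against $\varphi\in C^\infty_0$, the local compactness together with the convergence of the nonlocal term (the argument of \cite[Lemma 3.5]{Ackermann}, already invoked in the proof of Theorem \ref{teo ground state}, controls $\big(W*F(\gamma(u_n))\big)f(\gamma(u_n))\gamma(\varphi)$) gives $I'(u_0)\cdot\varphi=0$, hence $I'(u_0)=0$ by density.

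Next I would set $u_n^1:=u_n-u_0$, so $u_n^1\rightharpoonup 0$. The two key facts to extract are: (a) a Brezis–Lieb-type splitting of the energy, $I(u_n)=I(u_0)+I_\infty(u_n^1)+o(1)$, and (b) the analogous splitting of the derivative, $\|I'(u_n)-I'(u_0)-I'_\infty(u_n^1)\|\to 0$, so that $I_\infty(u_n^1)\to c-I(u_0)$ and $I'_\infty(u_n^1)\to 0$. For the quadratic (local) parts this is the standard $\|u_n\|^2=\|u_0\|^2+\|u_n^1\|^2+o(1)$ expansion plus the fact that $\int (V-V_\infty)|\gamma(u_n^1)|^2\to 0$ — here one uses $(V3)$, i.e. $V\to V_\infty$ at infinity and $V\le V_\infty$, together with $\gamma(u_n^1)\rightharpoonup 0$ and local $L^2$ compactness to kill the difference $V-V_\infty$ (which decays at infinity). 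For the Hartree term one needs a nonlocal Brezis–Lieb lemma: $\Psi(u_n)=\Psi(u_0)+\Psi(u_n^1)+o(1)$ and likewise for $\Psi'$; this is where the estimates of Lemmas \ref{estconv}, \ref{hipW}, \ref{c1}, \ref{c2} and the Hardy–Littlewood–Sobolev / Hausdorff–Young inequalities (Propositions \ref{pLieb}, \ref{HYoung}) do the work, splitting $F(\gamma(u_n))=F(\gamma(u_0))+F(\gamma(u_n^1))+o(\cdot)$ in the relevant $L^t$ spaces and using the boundedness of all pieces to pass to the limit in the double integral. Now run the dichotomy: if $\|u_n^1\|\to 0$ we are in case $(i)$. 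Otherwise, a vanishing-vs-nonvanishing argument (Lions' lemma, as already used for \eqref{Lions}) forces the existence of $(y_n^1)\subset\mathbb{R}^N$ with $\liminf\int_{B_R(y_n^1)}|\gamma(u_n^1)|^2>0$; since $u_n^1\rightharpoonup 0$ we must have $|y_n^1|\to\infty$. Setting $w_n^1(x,y):=u_n^1(x,y-y_n^1)$, translation invariance of $I_\infty$ and its derivative (established in the proof of Theorem \ref{teo ground state}) gives $w_n^1\rightharpoonup u^1$ with $u^1\neq 0$ and, repeating the first step with $I_\infty$ in place of $I$, $I'_\infty(u^1)=0$, so $u^1$ is a nontrivial solution of \eqref{Pinfty}.

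Finally I would iterate: replace $u_n^1$ by $u_n^2:=u_n^1-u^1(\cdot-y_n^1)$, which still weakly converges to $0$ and satisfies $I_\infty(u_n^2)\to c-I(u_0)-I_\infty(u^1)$, $I'_\infty(u_n^2)\to 0$ by the same splitting identities. Each extracted bubble $u^j$ has energy $I_\infty(u^j)\ge c_\infty>0$ bounded away from $0$ (because nontrivial critical points of $I_\infty$ lie on $\mathcal N_\infty$ and, by Lemma \ref{lN} applied to $I_\infty$, have norm $\ge\beta$, hence energy uniformly positive via the mountain-pass lower bound), while the total energy $c-I(u_0)$ is finite; therefore the process terminates after finitely many steps $k\in\mathbb{N}$, at which point $\|u_n^{k+1}\|\to 0$, which is exactly the decomposition $\big\|u_n-u_0-\sum_{j=1}^k u^j(\cdot-y_n^j)\big\|\to 0$ and the energy identity $I(u_n)\to I(u_0)+\sum_{j=1}^k I_\infty(u^j)$. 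One should also check $|y_n^i-y_n^j|\to\infty$ for $i\neq j$ so the bubbles do not interact, which again follows from the $L^2_{\mathrm{loc}}$-decay of the residual after subtracting the previous bubble.

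The main obstacle is the nonlocal Brezis–Lieb splitting of the Hartree energy $\Psi$ and of $\Psi'$ under mere weak convergence: unlike a local nonlinearity, $\big(W*F(\gamma(u_n))\big)f(\gamma(u_n))$ couples distant regions, so one must carefully decompose $W=W_1+W_2$, use the a.e. convergence of $\gamma(u_n)$ together with the uniform $L^t$/$L^\infty$ bounds from Lemmas \ref{hipW}--\ref{c2} and the generalized Young and Hardy–Littlewood–Sobolev inequalities to justify that the cross terms in $F(\gamma(u_0)+\gamma(u_n^1))$ vanish in the limit. A secondary delicate point is controlling $\int(V-V_\infty)|\gamma(u_n)|^2$, for which hypothesis $(V3)$ (and $(V2)$) is exactly what is needed.
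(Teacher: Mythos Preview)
The paper does not prove Lemma \ref{Struwe} at all: it is simply \emph{stated} as ``a well-known result due to M.\ Struwe'' and then used as a black box in the proof of Lemma \ref{PS}. Your proposal, by contrast, sketches an actual proof along the classical Struwe concentration-compactness iteration, and the outline is sound: establish $I'(u_0)=0$ via local compactness and Ackermann's lemma for the convolution term, perform a Brezis--Lieb splitting of both $I$ and $I'$ (with the potential term switching from $V$ to $V_\infty$ because $V(y)-V_\infty\to 0$ at infinity and $\gamma(u_n-u_0)\to 0$ in $L^2_{\mathrm{loc}}$), run the Lions dichotomy on the remainder, translate, extract a nontrivial critical point of $I_\infty$, and iterate, terminating because each bubble carries energy at least $c_\infty>0$.

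Two small remarks. First, when you justify the potential switch you cite $(V3)$, but the property you actually use is $(V2)$ (the limit $V(y)\to V_\infty$); $(V3)$ is the ordering $V\le V_\infty$, which is not needed here. Second, you correctly flag the nonlocal Brezis--Lieb decomposition of $\Psi$ and $\Psi'$ as the main technical obstacle; this is indeed the nontrivial step, and in the literature it is typically handled exactly as you indicate (splitting $W=W_1+W_2$, a.e.\ convergence plus uniform $L^t$ bounds, and the Hardy--Littlewood--Sobolev/Hausdorff--Young controls already assembled in Lemmas \ref{hipW}--\ref{c2} and \ref{estconv}). Since the paper offers no argument to compare against, your plan is more than adequate and follows the expected route.
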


\begin{lemma}\label{PS}
	The functional $I$ satisfies $(PS)_c$ for any $0\leq c<c_\infty$.
\end{lemma}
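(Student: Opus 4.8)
The plan is to prove that $I$ satisfies the Palais-Smale condition at any level $c < c_\infty$ by invoking the Splitting Lemma (Lemma \ref{Struwe}) and ruling out the bubbling alternative. First I would take a sequence $(u_n) \subset H^1(\mathbb{R}^{N+1}_+)$ with $I(u_n) \to c$ and $I'(u_n) \to 0$. By Lemma \ref{bounded}, $(u_n)$ is bounded, so along a subsequence $u_n \rightharpoonup u_0$ in $H^1(\mathbb{R}^{N+1}_+)$, and by Lemma \ref{Struwe} we have $I'(u_0) = 0$ and either (i) $u_n \to u_0$ strongly — in which case $(PS)_c$ holds and we are done — or (ii) there exist $k \geq 1$, points $y_n^j$ with $|y_n^j| \to \infty$, and nontrivial solutions $u^1, \dots, u^k$ of the limit problem \eqref{Pinfty} such that $I(u_n) \to I(u_0) + \sum_{j=1}^k I_\infty(u^j)$.

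The core of the argument is to derive a contradiction from alternative (ii). The key observations are the following. Since each $u^j$ is a nontrivial critical point of $I_\infty$, it lies on the Nehari manifold $\mathcal{N}_\infty$, and by the variational characterization of $c_\infty$ (established in Theorem \ref{teo ground state}) we have $I_\infty(u^j) \geq c_\infty$ for every $j$. Next, I claim $I(u_0) \geq 0$: indeed $u_0$ is a critical point of $I$, so $I'(u_0) \cdot u_0 = 0$, and using the Ambrosetti-Rabinowitz inequality $2F(t) \leq f(t)t$ from Remark \ref{obs1} together with the coercivity estimate $I_1(v) + I_2(v) \geq K\|v\|^2$, one gets
\begin{align*}
I(u_0) = I(u_0) - \frac{1}{2} I'(u_0) \cdot u_0 = \frac{1}{2}\int_{\mathbb{R}^N}\left(W*F(\gamma(u_0))\right)\left[f(\gamma(u_0))\gamma(u_0) - 2F(\gamma(u_0))\right] \geq 0,
\end{align*}
since $W \geq 0$. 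Combining these facts,
\[
c = I(u_0) + \sum_{j=1}^k I_\infty(u^j) \geq 0 + k\, c_\infty \geq c_\infty,
\]
which contradicts the hypothesis $c < c_\infty$. Therefore alternative (ii) cannot occur, so $u_n \to u_0$ strongly in $H^1(\mathbb{R}^{N+1}_+)$, proving $(PS)_c$.

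The main obstacle I anticipate is justifying the two auxiliary inequalities cleanly: first, that $I(u_0) \geq 0$ for any critical point $u_0$ of $I$ (including $u_0 = 0$, which is harmless since then $I(u_0) = 0$), which requires care that the convolution term is well-defined and the manipulation $I(u_0) - \tfrac12 I'(u_0)\cdot u_0$ is legitimate — this is covered by Lemmas \ref{estconv} and \ref{I1+I2} and formula \eqref{derivative}; and second, that $I_\infty(u^j) \geq c_\infty$ for every nontrivial solution $u^j$ of \eqref{Pinfty}. The latter follows because every nontrivial critical point of $I_\infty$ belongs to $\mathcal{N}_\infty$, and the Nehari-type characterization $c_\infty = \inf_{u \neq 0}\max_{t \geq 0} I_\infty(tu)$ (valid since, as remarked after Theorem \ref{teo ground state}, all the reasoning of Section \ref{mpg} applies to $I_\infty$) forces $I_\infty(w) = \max_{t\geq 0} I_\infty(tw) \geq c_\infty$ for every $w \in \mathcal{N}_\infty$, using that $t \mapsto I_\infty(tw)$ attains its maximum on $\mathcal{N}_\infty$. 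Once these two facts are in hand, the contradiction is immediate and the lemma follows.
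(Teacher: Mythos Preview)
Your proposal is correct and follows essentially the same route as the paper: bounded Palais--Smale sequence via Lemma~\ref{bounded}, weak limit $u_0$ is critical by the Splitting Lemma, $I(u_0)\geq 0$ from the Ambrosetti--Rabinowitz inequality, and alternative (ii) forces $c\geq c_\infty$, a contradiction. (A minor slip: your displayed identity should read $\tfrac12\int\big(W*F(\gamma(u_0))\big)\big[f(\gamma(u_0))\gamma(u_0)-F(\gamma(u_0))\big]$, not $-2F$, but this does not affect the sign and hence the argument.)
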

\begin{proof}Let us suppose that $(u_n)$ satisfies
	\[I(u_n)\to c<c_\infty\qquad\text{and}\qquad I'(u_n)\to 0.\]
We can suppose that the sequence $(u_n)$ is bounded, according to Lemma \ref{bounded}. Therefore, for a subsequence, we have $u_n\hookrightarrow u_0$ in $H^1(\mathbb{R}^{N+1}_+)$. It follows from the Splitting Lemma (Lemma \ref{Struwe}) that $I'(u_0)=0$. Since
\begin{align*}
I'(u_0)\cdot u_0&=\iint_{\mathbb{R}^{N+1}_+}\left(|\nabla u_0|^2+m^2u^2_0\right)+\int_{\mathbb{R}^{N}}V(y)|\gamma(u_0)|^2\\
&\qquad-\int_{\mathbb{R}^{N}}[W*F(\gamma(u_0))]f(\gamma(u_0))\gamma(u_0)\\
\intertext{and}
I(u_0)&=\frac{1}{2}\iint_{\mathbb{R}^{N+1}_+}\left(|\nabla u_0|^2+m^2u^2_0\right)+\frac{1}{2}\int_{\mathbb{R}^{N}}V(y)|\gamma(u_0)|^2\\
&\qquad-\frac{1}{2}\int_{\mathbb{R}^{N}}[W*F(\gamma(u_0))]F(\gamma(u_0)),
\end{align*}
we conclude that
\begin{equation}\label{Iu0}
I(u_0)=\int_{\mathbb{R}^{N}}[W*F(\gamma(u_0))]\left(\frac{1}{2}f(\gamma(u_0))\gamma(u_0)-F(\gamma(u_0))\right)>0,\end{equation}
as consequence of the Ambrosetti-Rabinowitz condition.

If $u_n\not\to u$ in $H^1(\mathbb{R}^{N+1}_+)$, by applying again the Splitting Lemma we guarantee the existence of $k\in\mathbb{N}$ and nontrivial solutions $u^1,\ldots,u^k$ of problem \eqref{Pinfty} satisfying
\[\lim_{n\to\infty}I(u_n)=c=I(u_0)+\sum_{j=1}^kI_\infty(u^j)\geq kc_\infty\geq c_\infty\]
contradicting our hypothesis. We are done.
$\hfill\Box$\end{proof}\vspace*{.2cm}

We prove the next result by adapting the proof given in Furtado, Maia e Medeiros \cite{FMM}:
\begin{lemma}\label{ccinfty}Suppose that $V(y)$ satisfies $(V_3)$. Then
\[0<c<c_\infty,\]
where $c$ is characterized in Lemma \ref{bounded}.
\end{lemma}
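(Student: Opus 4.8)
The plan is to compare the mountain-pass level $c$ for the functional $I$ with the level $c_\infty$ for the limit functional $I_\infty$, exploiting the strict inequality $V(y)\le V_\infty$, $V\not\equiv V_\infty$, in hypothesis $(V_3)$. First I would use the characterization $c=c^*=\inf_{u\neq 0}\max_{t\ge0}I(tu)$ established after Lemma \ref{bounded}, and the analogous one for $c_\infty$. The positivity $0<c$ follows directly from the mountain pass geometry (Lemma \ref{gpm}(i)), since every path in $\Gamma$ must cross the sphere $S$ on which $I\ge\delta>0$; alternatively it follows from \eqref{Iu0}-type reasoning once a critical point is produced. So the substantive claim is $c<c_\infty$.

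For that, let $w_\infty$ be the ground state solution of \eqref{Pinfty} obtained in Theorem \ref{teo ground state}, so $w_\infty\in\mathcal N_\infty$ and $I_\infty(w_\infty)=c_\infty$. I would then test the functional $I$ along the ray $t\mapsto tw_\infty$. Because $w_\infty\ge 0$, $w_\infty\not\equiv 0$, and $\gamma(w_\infty)\not\equiv 0$ (else the convolution term vanishes and $w_\infty$ could not lie on $\mathcal N_\infty$), hypothesis $(V_3)$ gives the strict inequality
\[
\int_{\mathbb{R}^N}V(y)|\gamma(w_\infty)|^2<\int_{\mathbb{R}^N}V_\infty|\gamma(w_\infty)|^2,
\]
because $V(y)\le V_\infty$ everywhere and $V(y)<V_\infty$ on a set of positive measure (the continuity of $V$ together with $V\not\equiv V_\infty$ forces this). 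Consequently $I(tw_\infty)<I_\infty(tw_\infty)$ for every $t>0$. Taking the maximum over $t\ge0$ on both sides,
\[
c\le \max_{t\ge 0}I(tw_\infty)<\max_{t\ge 0}I_\infty(tw_\infty)=I_\infty(w_\infty)=c_\infty,
\]
where the last equality uses that $w_\infty$ lies on the Nehari manifold $\mathcal N_\infty$, so the fibering map $t\mapsto I_\infty(tw_\infty)$ attains its maximum precisely at $t=1$ (this is the content of the discussion of $\Phi$ and $t_u$ in Section \ref{mpg}, applied to $I_\infty$). This chain of inequalities is exactly $0<c<c_\infty$.

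The main obstacle is making the strict inequality genuinely strict at the level of the $\max$ rather than merely pointwise in $t$: one needs to know that the maximum of $t\mapsto I(tw_\infty)$ is attained at some finite $t_0>0$ (guaranteed by the mountain pass geometry for $I$, Lemma \ref{gpm}, and the uniqueness of the maximizer from the monotonicity $g'_u(t)>0$), and then evaluate the strict pointwise inequality at that specific $t_0$, giving $\max_t I(tw_\infty)=I(t_0w_\infty)<I_\infty(t_0w_\infty)\le\max_t I_\infty(tw_\infty)=c_\infty$. A secondary point requiring care is verifying $\gamma(w_\infty)\not\equiv 0$: if $\gamma(w_\infty)=0$ a.e. then $I_\infty'(w_\infty)\cdot w_\infty=\iint(|\nabla w_\infty|^2+m^2w_\infty^2)>0$, contradicting $w_\infty\in\mathcal N_\infty$. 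With these two observations the argument is routine, following the scheme of Furtado, Maia and Medeiros \cite{FMM}.
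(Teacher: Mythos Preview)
Your argument is correct and actually more direct than the paper's. Both proofs start from the same ingredients: the ground state $\bar u=w_\infty\in\mathcal N_\infty$ of the limit problem and the characterization $c=\inf_{u\neq 0}\max_{t\ge 0}I(tu)$. The paper, however, does not simply compare $I(tw_\infty)$ with $I_\infty(tw_\infty)$ pointwise. Instead it introduces the unique $t_{\bar u}>0$ with $t_{\bar u}\bar u\in\mathcal N$, proves by a manipulation of the Nehari identities (using that $f(s)/s$ and $F$ are increasing) that $t_{\bar u}<1$, rewrites $I(t_{\bar u}\bar u)$ as
\[
g(t)=\int_{\mathbb{R}^N}\big[W*F(\gamma(t\bar u))\big]\Big(\tfrac12 f(\gamma(t\bar u))\gamma(t\bar u)-F(\gamma(t\bar u))\Big),
\]
observes that $g$ is strictly increasing, and concludes $c\le g(t_{\bar u})<g(1)=c_\infty$. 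Your route bypasses both the proof that $t_{\bar u}<1$ and the monotonicity of $g$: from $I(tw_\infty)<I_\infty(tw_\infty)$ for every $t>0$ you evaluate at the maximizer $t_0>0$ of the left side and bound by the maximum of the right side. This is shorter and uses only $(V_3)$; the paper's detour buys the extra information $t_{\bar u}<1$, which is not needed here. One small point implicit in both arguments: the strict inequality $\int V|\gamma(w_\infty)|^2<\int V_\infty|\gamma(w_\infty)|^2$ requires $\gamma(w_\infty)$ not to vanish on the open set $\{V<V_\infty\}$; this follows from the strict positivity of the limit ground state (Harnack, as in the proof of Theorem~\ref{t3}), a fact neither you nor the paper makes fully explicit.
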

\begin{proof}Let $\bar u\in \mathcal{N}_\infty$ be the weak solution of \eqref{Pinfty} given by Theorem \ref{teo ground state} and $t_{\bar u}>0$ be the unique number such that $t_{\bar u}\bar u\in \mathcal{N}$. We claim that $t_{\bar u}<1$. Indeed,
\[\int_{\mathbb{R}^{N}}[W*F(\gamma(t_{\bar u}\bar u))]f(\gamma(t_{\bar u}\bar u))\gamma(t_{\bar u}\bar u)\hspace*{7cm}\]

\vspace*{-.5cm}\begin{align*}
&=t^2_{\bar u}\iint_{\mathbb{R}^{N+1}_+}\left(|\nabla {\bar u}|^2+m^2{\bar u}^2\right)+\int_{\mathbb{R}^{N}}V(y)|\gamma(\bar{u})|^2\\
&< t^2_{\bar u}\iint_{\mathbb{R}^{N+1}_+}\left(|\nabla {\bar u}|^2+m^2{\bar u}^2\right)+\int_{\mathbb{R}^{N}}V_\infty|\gamma(\bar{u})|^2\\
&=t^2_{\bar u}\int_{\mathbb{R}^{N}}[W*F(\gamma(\bar u))]f(\gamma(\bar u))\gamma(\bar u)\\
&=t^2_{\bar u}\left(\int_{\mathbb{R}^{N}}[W*F(\gamma(\bar u))]f(\gamma(\bar u))\gamma(\bar u)+\int_{\mathbb{R}^{N}}[W*F(\gamma(t_{\bar u}\bar u))]f(\gamma(\bar u))\gamma(\bar u)\right.\\
&\qquad\quad\left.-\int_{\mathbb{R}^{N}}[W*F(\gamma(t_{\bar u}\bar u))]f(\gamma(\bar u))\gamma(\bar u)\right)
\end{align*}
thus yielding
\[\]
\begin{align*}
0&>\int_{\mathbb{R}^{N}}[W*F(\gamma(t_{\bar u}\bar u))]\left(\frac{f(\gamma(t_{\bar u}\bar u))}{\gamma(t_{\bar u}\bar u)}-\frac{f(\gamma(\bar u))}{\gamma(\bar u)}\right)\\
&\qquad +t^2_{\bar u}\int_{\mathbb{R}^{N}}\left[W*\left(F(\gamma(t_{\bar u}\bar u))-F(\gamma(\bar u))\right)\right]f(\gamma(u))\gamma(u).
\end{align*}

If $t_{\bar u}\geq 1$, since $f(s)/s$ is increasing, the first integral is non-negative and, since $F$ is increasing, the second integral as well. We conclude that $t_{\bar u}<1$.
	
Lemma \ref{bounded} and its previous comments show that
\[c\leq \max_{t\geq 0}I(t\bar u)=I(t_{\bar u}\bar u)=\int_{\mathbb{R}^{N}}[W*F(\gamma(t_{\bar u}\bar u))]\left(\frac{1}{2}f(\gamma(t_{\bar u}\bar u))\gamma(t_{\bar u}\bar u)-F(\gamma(t_{\bar u}\bar u))\right).\]
Since
\[g(t)=\int_{\mathbb{R}^{N}}[W*F(\gamma(t\bar u))]\left(\frac{1}{2}f(\gamma(t\bar u))\gamma(t\bar u)-F(\gamma(t\bar u))\right) \]
is a strictly increasing function, we conclude that
\[c=g(t_{\bar u})<g(1)=\int_{\mathbb{R}^{N}}[W*F(\gamma(\bar u))]\left(\frac{1}{2}f(\gamma(\bar u))\gamma(\bar u)-F(\gamma(\bar u))\right)=c_\infty,\]
proving our result. $\hfill\Box$\end{proof}

\noindent\textit{Proof of Theorem \ref{t1}}. Let $(u_n)$ be the minimizing sequence given by Lemma \ref{gpm}. It follows from Lemmas \ref{PS} and \ref{ccinfty} that $u_n\to u$ such that $I(u)=c$ and $I'(u)=0$.

We now turn our attention to the positivity of $u$. Seeing that
\[\iint_{\mathbb{R}^{N+1}_+}\left(\nabla u\cdot \nabla\varphi+m^2u\varphi\right)+\int_{\mathbb{R}^{N}}V(y)\gamma(u)\gamma\varphi=\int_{\mathbb{R}^{N}}[W*F(\gamma(u))]f(\gamma(u))\gamma(\varphi)\]
and choosing $\varphi=w^-$, the left-hand side of the equality is positive (by the definition of $I(u)$ and equation \eqref{I+}, since $I_1+I_2\geq K\|w\|^2$),  while $\Psi(u)\leq 0$. The proof is complete.
$\hfill\Box$

\section{Proof of Theorem \ref{classical}}

The proof of the next result adapts arguments in \cite{Cabre} and \cite{ZelatiNolasco}.
\begin{proposition}\label{p1} For all $\beta>0$ it holds
	\begin{align*}
	\hspace*{-.25cm}|\gamma(v_+)^{1+\beta}|^2_{2^{\#}}&\leq 2C^2_{2^{\#}}C_\beta\left[\left(|V|_\infty+CC_1(2+M)\right)|\gamma(v_+)^{1+\beta}|^2_{2}\right.\nonumber\\
	&\qquad\qquad\quad\left.+C_1|g|_{2N/[N(2-\theta)+\theta]}|\gamma(v_+)^{1+\beta}|^2_{2^{\#}(2/\theta)}\right],
	\end{align*}
	where $C_\beta=\max\{m^{-2},\left(1+\frac{\beta}{2}\right)\}$, $C,C_1,\tilde{C}$ and $M=M(\beta)$ are positive constants and $g=|W_1*F(\gamma(v))|$ is the function given by Lemma \ref{hipW}.
\end{proposition}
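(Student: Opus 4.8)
The proof is a Moser-type iteration estimate carried out on the half-space extension. The idea is to test the weak formulation of problem \eqref{P} with a truncated power of $\gamma(v_+)$ and exploit the trace inequality \eqref{casep} (for $t=2^{\#}$) together with the estimates on $W*F(\gamma(v))$ from Corollary \ref{cor}. More precisely, I would fix $\beta>0$ and, for $L>0$, set $\varphi_L = v_+ \min\{v_+,L\}^{2\beta}$ as test function; this $\varphi_L$ belongs to $H^1(\mathbb{R}^{N+1}_+)$ because the truncation keeps it bounded in $L^\infty$ in the $x$-direction while $v_+\in H^1$. Plugging $\varphi_L$ into \eqref{derivative} and using that $-\Delta v + m^2 v = 0$ in $\mathbb{R}^{N+1}_+$ gives, after the standard computation with $w_L := v_+\min\{v_+,L\}^\beta$,
\[
\iint_{\mathbb{R}^{N+1}_+}\!\bigl(|\nabla w_L|^2 + m^2 w_L^2\bigr)
\;\le\; C_\beta\!\left[\int_{\mathbb{R}^N}\!|V|\,\gamma(v_+)\gamma(\varphi_L) + \int_{\mathbb{R}^N}\!\bigl(W*F(\gamma(v))\bigr)f(\gamma(v))\gamma(\varphi_L)\right],
\]
where the chain-rule loss in passing from $\nabla\varphi_L$ to $|\nabla w_L|^2$ produces exactly the factor $C_\beta=\max\{m^{-2},1+\beta/2\}$ (the $m^{-2}$ coming from absorbing the $m^2w_L^2$ term when we only want to bound the right side by $\|w_L\|^2$-type quantities).

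Next I would estimate the two boundary integrals. For the potential term, $|V|\le|V|_\infty$ by ($V_1$), and $\gamma(v_+)\gamma(\varphi_L)\le \gamma(w_L)^2$ pointwise, giving the contribution $|V|_\infty|\gamma(v_+)^{1+\beta}|_2^2$ in the limit $L\to\infty$. For the nonlinear term, write $W*F(\gamma(v))\le C+g$ with $g\in L^{2N/[N(2-\theta)+\theta]}$ by Corollary \ref{cor}, and use the growth bound \eqref{boundf}, $f(\gamma(v))\le \xi\gamma(v)+C_\xi\gamma(v)^{\theta-1}$, together with Lemma \ref{c1} to replace $\gamma(v)^{\theta-2}$ by $1+g_2$ with $g_2\in L^N$. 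This is where the constants $C_1$, $M=M(\beta)$ and the mysterious term $CC_1(2+M)$ arise: the pieces of $f(\gamma(v))$ that are comparable to $\gamma(v)$ (via the $L^N$ and $L^\infty$ weights, absorbed into $\gamma(v_+)^{1+\beta}$ in $L^2$) get grouped with the potential term, while the genuinely ``convolution-weighted'' piece $g\cdot\gamma(v)^{\theta-2}$ is handled by Hölder, putting $g$ in $L^{2N/[N(2-\theta)+\theta]}$ and the remaining power of $\gamma(v_+)^{1+\beta}$ in the conjugate Lebesgue space, which by the arithmetic $\alpha'N(\theta-2)=2N/(N-1)$ from Lemma \ref{c2} is precisely $L^{2^{\#}(2/\theta)}$. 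Finally, applying the trace embedding \eqref{casep} with $t=2^{\#}$ to $w_L$ bounds $|\gamma(w_L)|_{2^{\#}}^2$ by $\|w_L\|^2$, i.e.\ by the left-hand side $\iint(|\nabla w_L|^2+m^2w_L^2)$ times a constant $2C_{2^{\#}}^2$; combining and letting $L\to\infty$ (monotone convergence, legitimate since each side is finite on the truncated level) yields the stated inequality.

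**Main obstacle.** The delicate point is bookkeeping: verifying that $\varphi_L\in H^1(\mathbb{R}^{N+1}_+)$ so that it is an admissible test function, and then confirming \emph{a posteriori} that $\gamma(v_+)^{1+\beta}\in L^2(\mathbb{R}^N)\cap L^{2^{\#}(2/\theta)}(\mathbb{R}^N)$ so the right-hand side of the claimed inequality is not vacuously infinite before the iteration can even start — this requires knowing $\gamma(v)\in L^q$ for a range of $q$, which is exactly what \eqref{gammav} and the earlier lemmas supply for the first step of the induction. A second, more technical obstacle is the precise accounting of constants: tracking how the chain-rule factor in $|\nabla(v_+\min\{v_+,L\}^\beta)|^2$ versus $\nabla(v_+\min\{v_+,L\}^{2\beta})\cdot\nabla v_+$ produces $C_\beta$, and how the split of $f(\gamma(v))$ via \eqref{boundf} and Lemmas \ref{c1}--\ref{c2} distributes into the ``$L^\infty$-weighted'' bucket (contributing $CC_1(2+M)$) and the ``$g$-weighted'' bucket (contributing $C_1|g|_{2N/[N(2-\theta)+\theta]}$). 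None of this is conceptually hard, but getting the exponents $2^{\#}(2/\theta)$ and the constant $C_\beta=\max\{m^{-2},1+\beta/2\}$ to come out exactly as stated demands care.
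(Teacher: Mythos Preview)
Your overall strategy is exactly the paper's: test \eqref{derivative} with $\varphi=v\,v_T^{2\beta}$ where $v_T=\min\{v_+,T\}$, produce the factor $C_\beta=\max\{m^{-2},1+\beta/2\}$ from the chain-rule computation, bound $W*F(\gamma(v))\le C+g$ via Corollary~\ref{cor}, use $|f(t)|\le C_1(|t|+|t|^{\theta-1})$ together with Lemmas~\ref{c1}--\ref{c2}, apply the trace inequality $|\gamma(\cdot)|_{2^\#}\le C_{2^\#}\|\cdot\|$, and let $T\to\infty$.

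There is, however, one genuine gap in your bookkeeping. You say the $L^N$-weighted pieces (namely $g_2$ from Lemma~\ref{c1} and $h:=g|\gamma(v)|^{\theta-2}$ from Lemma~\ref{c2}) are ``absorbed into $\gamma(v_+)^{1+\beta}$ in $L^2$'' and grouped with the potential term. That cannot work directly: if $G:=g_2+h\in L^N(\mathbb{R}^N)$, H\"older gives $\int_{\mathbb{R}^N}G\,\gamma(vv_T^\beta)^2\le |G|_N\,|\gamma(vv_T^\beta)|_{2^\#}^2$, which is the \emph{same} norm as on the left-hand side, not an $L^2$ norm. The paper handles this by a level-set splitting: for $M>0$ write
\[
\int_{\mathbb{R}^N}G\,\gamma(vv_T^\beta)^2\le M\int_{\{G\le M\}}\gamma(vv_T^\beta)^2+\epsilon(M)\,|\gamma(vv_T^\beta)|_{2^\#}^2,\qquad \epsilon(M)=\Bigl(\int_{\{G>M\}}G^N\Bigr)^{1/N}\to 0,
\]
and then chooses $M=M(\beta)$ so that $\epsilon(M)\,C_{2^\#}^2C_\beta CC_1<\tfrac12$, allowing the $2^\#$-piece to be absorbed into the left. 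This absorption is precisely the origin of the $\beta$-dependent constant $M$ and of the ``$2+M$'' in the statement. A related misidentification: the term that ends up in the $L^{2^\#(2/\theta)}$ bucket via H\"older is the plain $\int g\,\gamma(vv_T^\beta)^2$ (coming from the linear part $|\gamma(v)|$ of $f$), not $\int g|\gamma(v)|^{\theta-2}\gamma(vv_T^\beta)^2$; the latter equals $\int h\,\gamma(vv_T^\beta)^2$ and belongs to the $G\in L^N$ term treated by the level-set argument just described.
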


\noindent\begin{proof}
Choosing $\varphi=\varphi_{\beta,T}=vv^{2\beta}_T$ in \eqref{derivative}, where $v_T=\min\{v_+,T\}$ and $\beta>0$, we have $0\leq \varphi_{\beta,T}\in H^1(\mathbb{R}^{N+1}_+)$ and
\begin{multline}\label{varphibetaT}
\iint_{\mathbb{R}^{N+1}_+}\nabla v\cdot\nabla \varphi_{\beta,T}+m^2v\varphi_{\beta,T}\\
=-\int_{\mathbb{R}^{N}}V(y)\gamma(v)\gamma(\varphi_{\beta,T})+\int_{\mathbb{R}^{N}}\left(W*F(\gamma(v))\right)f(\gamma(v))\gamma(\varphi_{\beta,T}),
\end{multline}
Since $\nabla\varphi_{\beta,T}=v^{2\beta}_T\nabla v+2\beta vv^{2\beta-1}_T\nabla v_T$,
the left-hand side of \eqref{varphibetaT} is given by
\begin{multline}\label{varphibetaTl}
\iint_{\mathbb{R}^{N+1}_+}\nabla v\cdot \left(v^{2\beta}_T\nabla v+2\beta vv^{2\beta-1}_T\nabla v_T\right)+m^2v\left(vv^{2\beta}_T\right)\\
=\iint_{\mathbb{R}^{N+1}_+}v^{2\beta}_T\left[|\nabla v|^2+m^2v^2\right]+2\beta\iint_{D_T}v^{2\beta}_T|\nabla v|^2,
\end{multline}
where $D_T=\{(x,y)\in (0,\infty)\times \mathbb{R}^{N}\,:\, v_T(x,y)\leq T\}$.

Now we express \eqref{varphibetaTl} in terms of $\|vv^\beta_T\|^2$. For this, we note that $\nabla(vv^\beta_T)=v^\beta_T\nabla v+\beta vv^{\beta-1}_T\nabla v_T$. Therefore,
\[\iint_{\mathbb{R}^{N+1}_+}|\nabla (vv^\beta_T)|^2=\iint_{\mathbb{R}^{N+1}_+}v^{2\beta}_T|\nabla v|^2+(2\beta+\beta^2)\iint_{D_T}v^{2\beta}_T|\nabla v|^2,\]
thus yielding
\begin{align}\label{norm}
\|vv^\beta_T\|^2
&=\left(\iint_{\mathbb{R}^{N+1}_+}v^{2\beta}_T|\nabla v|^2+(2\beta+\beta^2)\iint_{D_T}v^{2\beta}_T|\nabla v|^2\right)+\iint_{\mathbb{R}^{N+1}_+}(vv^\beta_T)^2\nonumber\\
&=\iint_{\mathbb{R}^{N+1}_+}v^{2\beta}_T\left(|\nabla v|^2+|v|^2\right)+2\beta\left(1+\frac{\beta}{2}\right)\iint_{D_T}v^{2\beta}_T|\nabla v|^2\nonumber\\
&\leq C_\beta\left[\iint_{\mathbb{R}^{N+1}_+}v^{2\beta}_T\left(|\nabla v|^2+m^2|v|^2\right)+2\beta\iint_{D_T}v^{2\beta}_T|\nabla v|^2\right],
\end{align}
where $C_\beta=\max\left\{m^{-2},\left(1+\frac{\beta}{2}\right)\right\}$. Gathering \eqref{varphibetaT}, \eqref{varphibetaTl} and \eqref{norm}, we obtain
\begin{align}\label{norm=r}
\|vv^\beta_T\|^2\leq& C_\beta\left[-\int_{\mathbb{R}^{N}}V\gamma(v)^2\gamma(v_T)^{2\beta}\right.\nonumber\\
&\qquad+\left.\int_{\mathbb{R}^{N}}\left(W*F(\gamma(v))\right)f(\gamma(v))\gamma(v)\gamma(v_T)^{2\beta}\right].
\end{align}

We now start to consider the right-hand side of \eqref{norm=r}. Since $|f(t)|\leq C_1(|t|+|t|^{\theta-1})$, Corollary \ref{cor} shows that it can be written as
\begin{align}\label{rhs}
\leq& C_\beta\left[|V|_\infty\int_{\mathbb{R}^{N}}\gamma(vv_T^\beta)^{2}+\int_{\mathbb{R}^{N}}(C+g)|f(\gamma(v))|\,|\gamma(v)|\gamma(v_T)^{2\beta}\right]\nonumber\\
\leq&C_\beta\left[|V|_\infty\int_{\mathbb{R}^{N}}\gamma(vv_T^\beta)^{2}+C\int_{\mathbb{R}^{N}}C_1\left(|\gamma(v)|+|\gamma(v)|^{\theta-1}\right)|\gamma(v)|\gamma(v_T)^{2\beta}\right.\nonumber\\
&\qquad+\left.C_1\int_{\mathbb{R}^{N}}g\left(|\gamma(v)|+|\gamma(v)|^{\theta-1}\right)\,|\gamma(v)|\gamma(v_T)^{2\beta}\right]\nonumber\\
\leq& C_\beta\left[\left(|V|_\infty+CC_1\right)\int_{\mathbb{R}^{N}}\gamma(vv_T^\beta)^{2}+CC_1\int_{\mathbb{R}^{N}}|\gamma(v)|^{\theta-2}\gamma(v)^2\gamma(v_T)^{2\beta}\right.\nonumber\\
&\qquad+\left.C_1\int_{\mathbb{R}^{N}}g\gamma(vv_T^\beta)^{2}+C_1\int_{\mathbb{R}^{N}}g|\gamma(v)|^{\theta-2}\gamma(vv_T^\beta)^{2}\right].
\end{align}

Applying Lemmas \ref{c1} and \ref{c2}, inequality \eqref{rhs} becomes
\begin{align*}
\leq&C_\beta\left[\left(|V|_\infty+CC_1\right)\int_{\mathbb{R}^{N}}\gamma(vv_T^\beta)^{2}+CC_1\int_{\mathbb{R}^{N}}\left(1+g_2\right)\gamma(vv_T^\beta)^{2}\right.\nonumber\\
&\qquad+\left.C_1\int_{\mathbb{R}^{N}}g\gamma(vv_T^\beta)^{2}+C_1\int_{\mathbb{R}^{N}}h\gamma(vv_T^\beta)^{2}\right]\nonumber\\
\leq&C_\beta\left[\left(|V|_\infty+2CC_1\right)\!\!\int_{\mathbb{R}^{N}}\gamma(vv_T^\beta)^{2}+CC_1\int_{\mathbb{R}^{N}}g\gamma(vv_T^\beta)^{2}+CC_1\int_{\mathbb{R}^{N}}G\gamma(vv_T^\beta)^{2}\right],
\end{align*}
where $G=g_2+h\in L^N(\mathbb{R}^{N})$, admitting that $CC_1\geq C_1$.

Because $|\gamma(u)|_{2^\#}\leq C_{2^{\#}}\|u\|$ for all $u\in H^1(\mathbb{R}^{N+1}_+)$, the last inequality is equivalent to
\begin{align}\label{rhs3}
|\gamma(vv^{\beta}_T)|^2_{2^{\#}}&\leq C^2_{2^{\#}}C_\beta\left[\left(|V|_\infty+2CC_1\right)\!\!\int_{\mathbb{R}^{N}}\gamma(vv_T^\beta)^{2}+CC_1\int_{\mathbb{R}^{N}}g\gamma(vv_T^\beta)^{2}\right.\nonumber\\
&\qquad\qquad\left. +CC_1\int_{\mathbb{R}^{N}}G\gamma(vv_T^\beta)^{2}\right].
\end{align}

Let us consider the last integral in the right-hand side of \eqref{rhs3}. For all $M>0$, define $A_1=\{G\leq M\}$ and $A_2=\{G>M\}$. Then, whereas $G\in L^N(\mathbb{R}^{N})$,
\begin{align*}
\int_{\mathbb{R}^{N}}G\gamma(vv_T^\beta)^{2}\leq& M\int_{A_1}\gamma(vv_T^\beta)^{2}+\left(\int_{A_2}G^N\right)^{\frac{1}{N}}\left(\int_{A_2}\gamma(vv_T^\beta)^{2\frac{N}{N-1}}\right)^{\frac{N-1}{N}}\nonumber\\
\leq&M\int_{\mathbb{R}^{N}}\gamma(vv_T^\beta)^{2}+\epsilon(M)\left(\int_{\mathbb{R}^{N}}\gamma(vv_T^\beta)^{2^{\#}}\right)^{\frac{N-1}{N}},
\end{align*}
and $\epsilon(M)=\left(\int_{A_2}G^N\right)^{1/N}\to 0$ when $M\to\infty$.

If $M$ is taken so that $\epsilon(M)C^2_{2^{\#}}C_\beta CC_1<1/2$, we have
\begin{align}\label{rhs4}
|\gamma(vv^{\beta}_T)|^2_{2^{\#}}&\leq 2C^2_{2^{\#}}C_\beta
\left[\left(|V|_\infty+CC_1(2+M)\right)\int_{\mathbb{R}^{N}}\gamma(vv_T^\beta)^{2}\right.\nonumber\\
&\qquad\qquad\quad\left.+CC_1\int_{\mathbb{R}^{N}}g\gamma(vv_T^\beta)^{2}\right].
\end{align}

The Hölder inequality guarantees that
\begin{align*}
\int_{\mathbb{R}^{N}}g\gamma(vv^{\beta}_T)^{2}\leq |g|_{2N/[N(2-\theta)+\theta]}\left(\int_{\mathbb{R}^{N}}\gamma(vv^{\beta}_T)^{2\alpha'}\right)^{1/\alpha'},
\end{align*}
where
\[\alpha'=\frac{\frac{2N}{N(2-\theta)+\theta}}{\frac{2N}{N(2-\theta)+\theta}-1}=\frac{2N}{(N-1)\theta}=\frac{2^{\#}}{\theta}.\]
Thus,
\begin{align*}
\int_{\mathbb{R}^{N}}g\gamma(vv^{\beta}_T)^{2}\leq |g|_{2N/[N(2-\theta)+\theta]}\,|\gamma(vv^{\beta}_T)|^2_{2^{\#}(2/\theta)}
\end{align*}
and substitution on the right-hand side of \eqref{rhs4} yields
\begin{align}\label{rhs5}
\hspace*{-.25cm}|\gamma(vv^{\beta}_T)|^2_{2^{\#}}&\leq 2C^2_{2^{\#}}C_\beta\left[\left(|V|_\infty+CC_1(2+M)\right)|\gamma(vv^{\beta}_T)|^2_{2}\right.\nonumber\\
&\qquad\qquad\quad\left.+C_1|g|_{2N/[N(2-\theta)+\theta]}|\gamma(vv^{\beta}_T)|^2_{2^{\#}(2/\theta)}\right].
\end{align}

Since $vv^\beta_T\to v^{1+\beta}_+$, it follows from \eqref{rhs5} that
\begin{align*}
\hspace*{-.25cm}|\gamma(v_+)^{1+\beta}|^2_{2^{\#}}&\leq 2C^2_{2^{\#}}C_\beta\left[\left(|V|_\infty+CC_1(2+M)\right)|\gamma(v_+)^{1+\beta}|^2_{2}\right.\\
&\qquad\qquad\quad\left.+C_1|g|_{2N/[N(2-\theta)+\theta]}|\gamma(v_+)^{1+\beta}|^2_{2^{\#}(2/\theta)}\right],
\end{align*}
and we are done. (Observe, however, that $M$ depends on $\beta$.) $\hfill\Box$\end{proof}\vspace*{.2cm}

\begin{proposition}\label{p2}
	For all $p\in [2,\infty)$ we have $\gamma(v)\in L^p(\mathbb{R}^{N})$.
\end{proposition}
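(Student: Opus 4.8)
The plan is to run a Moser-type iteration, bootstrapping the integrability of $\gamma(v)$ from $L^{2^{\#}}$ upward in a sequence of steps, each of which uses Proposition \ref{p1} with a suitably chosen exponent $\beta$. First I would observe that $2^{\#}(2/\theta) < 2^{\#}$ since $\theta > 2$, so the very first term I must control in \eqref{p1}'s conclusion, namely $|\gamma(v_+)^{1+\beta}|^2_{2^{\#}(2/\theta)}$, involves a \emph{lower} power than the left-hand side; this is precisely what makes the iteration gain integrability. Concretely, if $\gamma(v_+)^{1+\beta}\in L^2(\mathbb{R}^{N})$ and $\gamma(v_+)^{1+\beta}\in L^{2^{\#}(2/\theta)}(\mathbb{R}^{N})$, then Proposition \ref{p1} yields $\gamma(v_+)^{1+\beta}\in L^{2^{\#}}(\mathbb{R}^{N})$, i.e. $\gamma(v_+)\in L^{(1+\beta)2^{\#}}(\mathbb{R}^{N})$. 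Starting from $\gamma(v)\in L^q(\mathbb{R}^{N})$ for all $q\in[2,2^{\#}]$ (which is \eqref{gammav}), one picks $\beta$ so that both $2(1+\beta)$ and $\frac{2^{\#}}{\theta}\cdot 2(1+\beta)$ lie in $[2,2^{\#}]$, obtaining a first improvement; then one iterates, at each stage feeding the newly-gained exponent back into the two norms on the right.

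The key bookkeeping step is to set up the recursion cleanly. Define $p_0 = 2^{\#}$ and, having reached $\gamma(v)\in L^{p_n}$, choose $\beta_n$ by $2(1+\beta_n) = p_n$, so the $L^2$-term is finite; the other term requires $\frac{2^{\#}}{\theta}(1+\beta_n)\cdot 2 = \frac{2^{\#}}{\theta}p_n$ to be an exponent for which $\gamma(v)$ is already known to be integrable. When $p_n \geq \frac{2^{\#}}{\theta}p_n$ is not automatic, one instead chooses $\beta_n$ slightly smaller, so that $\frac{2^{\#}}{\theta}(1+\beta_n)\cdot 2 \le p_n$ while still $(1+\beta_n)2^{\#} > p_n$, guaranteeing strict gain $p_{n+1} := (1+\beta_n)2^{\#} > p_n$ with a definite ratio $p_{n+1}/p_n \ge \theta/2 \cdot \big(\text{something}>1\big)$ bounded below by a constant exceeding $1$. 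Since $\theta/2 > 1$ is fixed, after finitely many steps $p_n$ exceeds any prescribed $p$, which proves the claim for that $p$ — and hence for all $p\in[2,\infty)$. The finiteness of each right-hand side at every stage is exactly what Proposition \ref{p1} delivers, once one notes $M = M(\beta_n)$ is a fixed finite constant at each step (its $\beta$-dependence is harmless since only finitely many steps are used to reach a given $p$).

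The main obstacle is making the exponent arithmetic work uniformly: one must verify that the choice of $\beta_n$ can always be made so that the gain factor from $p_n$ to $p_{n+1}$ stays bounded away from $1$, so that the induction terminates after finitely many steps rather than converging to a finite limit. This hinges on the inequality $\frac{2N}{N(2-\theta)+\theta} \ge 1$ (noted in the text before Lemma \ref{hipW}), equivalently $\alpha' = 2^{\#}/\theta \ge 1$ together with $\theta < 2^{\#}$, which is what keeps $2^{\#}/\theta$ strictly larger than $1$; the edge case $N = \theta = 2$ where $\alpha' = 1$ must be handled by noting that there $2^{\#} = 4 > 2 = \theta$ still gives a genuine gain $p_{n+1} = 2(1+\beta_n)\cdot 2 = 2 p_n$ when $\beta_n$ is chosen by $2(1+\beta_n) = p_n$. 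A secondary, purely technical point is that Proposition \ref{p1} is stated for $\gamma(v_+)$, so one must first record that replacing $v$ by $v_+$ changes nothing here since $v \ge 0$ (the ground state is non-negative by Theorem \ref{t1} and its proof), and in any case $|\gamma(v)|^p = \gamma(v_+)^p$ a.e. up to a set where $v \le 0$, on which $f(\gamma(v)) = 0$ by the convention adopted before Proposition 2.1.
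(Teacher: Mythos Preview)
Your approach is correct and is essentially the same Moser-type iteration the paper uses; the paper simply makes the clean choice $1+\beta_k = (\theta/2)^k$ from the outset, which automatically places both right-hand norms of Proposition~\ref{p1} at exponents already controlled by the previous step and yields the geometric gain $p_k = 2^{\#}(\theta/2)^k \to \infty$, avoiding your detour through an initial choice that must then be corrected ``slightly smaller''. Note also that the paper treats $v_-$ by repeating the argument rather than invoking $v\ge 0$, which is the right thing to do here since Proposition~\ref{p2} feeds into Theorem~\ref{classical} for \emph{arbitrary} weak solutions, not only the ground state.
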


\noindent\begin{proof} Since $\frac{2N}{N-1}\frac{2}{\theta}\leq 2$ never occurs, we have $2<\frac{2^{\#}2}{\theta}=\frac{2N}{N-1}\frac{2}{\theta}<2^{\#}$.

According to the Proposition \ref{p1}, we have
\begin{align}\label{bs1}
|\gamma(v_+)^{1+\beta}|^2_{2^{\#}}&\leq \left[D_1|\gamma(v_+)^{1+\beta}|^2_{2}+E_1|\gamma(v_+)^{1+\beta}|^2_{2^{\#}(2/\theta)}\right],
\end{align}
where $D_1$ and $E_1$ are positive constants.

Choosing $\beta_1+1:=(\theta/2)>1$, it follows from \eqref{gammav} that
\[|\gamma(v_+)^{1+\beta}|^{2}_{2^{\#}(2/\theta)}=|\gamma(v_+)|^{\theta}_{\frac{2N}{N-1}}<\infty,\]
from what follows that the right-hand side of \eqref{bs1} is finite. We conclude that $|\gamma(v_+)|\in {L^{\frac{2N}{N-1}\frac{\theta}{2}}}(\mathbb{R}^{N})<\infty$. Now, we choose $\beta_2$ so that $\beta_2+1=(\theta/2)^2$ and conclude that
\[|\gamma(v_+)|\in L^{\frac{2N}{N-1}\frac{\theta^2}{2^2}}(\mathbb{R}^{N}).\]

After $k$ iterations we obtain that
\[|\gamma(v_+)|\in L^{\frac{2N}{N-1}\frac{\theta^k}{2^k}}(\mathbb{R}^{N}),\]
from what follows that $\gamma(v_+)\in L^p(\mathbb{R}^{N})$ for all $p\in [2,\infty)$. Since the same arguments are valid for $v_-$, we have $\gamma(v)\in L^p(\mathbb{R}^{N})$ for all $p\in [2,\infty)$.
$\hfill\Box$\end{proof}\vspace*{.2cm}

By simply adapting the proof given in \cite{ZelatiNolasco}, we present, for the convenience of the reader, the proof of our next result:\vspace*{.2cm}

\begin{proposition}\label{t2}
	Let $v\in H^1(\mathbb{R}^{N+1}_+)$ be a weak solution of \eqref{P}. Then $\gamma(v)\in L^p(\mathbb{R}^{N})$ for all $p\in [2,\infty]$ and $v\in L^\infty(\mathbb{R}^{N+1}_+)$.
\end{proposition}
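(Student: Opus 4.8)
The plan is to upgrade the conclusion of Proposition \ref{p2} — that $\gamma(v)\in L^p(\mathbb{R}^N)$ for every finite $p\geq 2$ — first to the endpoint $p=\infty$ for the trace, and then to deduce $v\in L^\infty(\mathbb{R}^{N+1}_+)$ from the extension equation. The main tool is a Moser-type iteration run on the inequality of Proposition \ref{p1}, but now exploiting the gain we already have: since $\gamma(v)\in L^q(\mathbb{R}^N)$ for all finite $q$, the term $|g|_{2N/[N(2-\theta)+\theta]}$ and the term $|V|_\infty+CC_1(2+M)$ can be absorbed, and the inequality
\[
|\gamma(v_+)^{1+\beta}|^2_{2^{\#}}\leq 2C^2_{2^{\#}}C_\beta\Big[\big(|V|_\infty+CC_1(2+M)\big)|\gamma(v_+)^{1+\beta}|^2_{2}+C_1|g|_{2N/[N(2-\theta)+\theta]}|\gamma(v_+)^{1+\beta}|^2_{2^{\#}(2/\theta)}\Big]
\]
becomes, after Hölder's inequality applied to both terms on the right (legitimate because $\gamma(v_+)^{1+\beta}$ lies in every $L^s$), an estimate of the shape $\|\gamma(v_+)\|_{2^{\#}(1+\beta)}\leq \big(A_\beta\big)^{1/(1+\beta)}\|\gamma(v_+)\|_{2(1+\beta)\cdot r'}^{?}$ with a controlled constant $A_\beta$ that grows only polynomially in $\beta$ (recall $C_\beta=\max\{m^{-2},1+\beta/2\}$ and $M=M(\beta)$; one must check $M(\beta)$ grows at most polynomially). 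Setting $p_k\to\infty$ along the geometric scale $p_{k+1}=\frac{2^{\#}}{2\sigma}p_k$ for a suitable fixed ratio $>1$ (coming from comparing the exponent $2^{\#}(1+\beta)$ on the left with the exponent on the right) and iterating, the product of the constants $A_{\beta_k}^{1/(1+\beta_k)}$ converges, which yields a uniform bound $|\gamma(v_+)|_\infty<\infty$; the same argument applied to $v_-$ gives $\gamma(v)\in L^\infty(\mathbb{R}^N)$.

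Once $\gamma(v)\in L^\infty(\mathbb{R}^N)$ is known, I would pass to $v$ itself on $\mathbb{R}^{N+1}_+$. The function $v$ solves $-\Delta v+m^2v=0$ in the half-space with the Neumann-type boundary condition $-\partial v/\partial x(0,y)=-V(y)\gamma(v)+\big(W*F(\gamma(v))\big)f(\gamma(v))$. The boundary datum is now in $L^\infty(\mathbb{R}^N)$: indeed $V$ is bounded, $f(\gamma(v))$ is bounded by \eqref{boundf} since $\gamma(v)\in L^\infty\cap L^2$, and $W*F(\gamma(v))$ is bounded by Corollary \ref{cor} together with $F(\gamma(v))\in L^1\cap L^\infty$ (hence $g\in L^\infty$ as well, being a convolution of an $L^r$ function with an $L^{r'}$ function, or more simply because $F(\gamma(v))$ now lies in every $L^p$). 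Since $v\in H^1$ and solves an equation with bounded right-hand side at the boundary and a benign interior equation, elliptic estimates up to the boundary (reflecting across $\{x=0\}$ via the even extension, which turns the Neumann condition into a distributional equation on all of $\mathbb{R}^{N+1}$ with an $L^\infty$ surface measure source, or alternatively a direct Moser iteration on the half-space using the trace inequalities \eqref{Heu}, \eqref{casep}) give $v\in L^\infty_{loc}$, and together with the decay already built into the $H^1$ framework one concludes $v\in L^\infty(\mathbb{R}^{N+1}_+)$. I would phrase this last step by a second Moser iteration directly on $v$: test the weak formulation \eqref{derivative} with $\varphi=v\,v_T^{2\beta}$, use \eqref{norm=r}–\eqref{rhs5} style bounds but now with the already-known $L^\infty$ control of the boundary nonlinearity, and let $T\to\infty$, $\beta\to\infty$.

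The step I expect to be the main obstacle is tracking the $\beta$-dependence of the constant $M=M(\beta)$ appearing in Proposition \ref{p1}, because $M$ was chosen so that $\epsilon(M)C^2_{2^{\#}}C_\beta CC_1<1/2$, i.e. $\epsilon(M)\sim 1/C_\beta\sim 1/\beta$; since $\epsilon(M)=\big(\int_{\{G>M\}}G^N\big)^{1/N}$ and $G\in L^N$, this forces $M=M(\beta)\to\infty$, and one must quantify the rate. The cleanest fix is to observe that for the iteration it suffices that $\log M(\beta_k)$ be summable after division by $(1+\beta_k)$, and since $\beta_k$ grows geometrically while $M(\beta_k)$ grows at most like a fixed power of $\beta_k$ (because $\int_{\{G>M\}}G^N$ decays at least like a fixed positive power of $1/M$ once one notes $G\in L^N$ only — so in the worst case one must instead use the slightly stronger integrability of $G$, or replace the crude splitting by a genuine interpolation exploiting $\gamma(v)\in L^p$ for all $p$, which is available here and removes the problematic $G$ term entirely). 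In fact, since Proposition \ref{p2} already gives $\gamma(v)\in L^p$ for all finite $p$, the honest route is to re-derive the iteration inequality \emph{not} from Proposition \ref{p1} verbatim but by redoing \eqref{norm=r}–\eqref{rhs} with Hölder exponents chosen using this extra integrability, so that no bad-set splitting is needed and the constant is manifestly polynomial in $\beta$; then the geometric iteration closes immediately. I would therefore present the proof as: (1) record that all nonlinear coefficients are in every $L^p$; (2) derive the clean reverse-Hölder inequality with polynomial constant; (3) iterate to get $\gamma(v)\in L^\infty$; (4) transfer to $v\in L^\infty(\mathbb{R}^{N+1}_+)$ by the same scheme on the half-space.
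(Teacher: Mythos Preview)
Your proposal is essentially the paper's own argument: first use Proposition~\ref{p2} to put $\gamma(v)$ in every finite $L^p$, then re-derive the iteration inequality from \eqref{norm=r} exploiting this so that $W*F(\gamma(v))\in L^\infty$ and the residual weight $|\gamma(v)|^{\theta-2}$ is bounded by $1+g_3$ with $g_3\in L^{2N}$ (not merely $L^N$), which yields a clean inequality $|\gamma(v_+)^{1+\beta}|^2_{2^{\#}}\leq C_4C_\beta\,|\gamma(v_+)^{1+\beta}|^2_{2}$ with a constant growing only polynomially in $\beta$; the geometric Moser iteration $1+\beta_n=(N/(N-1))^n$ then closes. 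Your diagnosis of why one should \emph{not} try to iterate Proposition~\ref{p1} verbatim (the $M(\beta)$ blow-up) and your proposed fix (re-derive using the new integrability) match the paper exactly.

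One place where the paper is more economical than your step (4): you suggest a second Moser iteration or elliptic/reflection estimates to pass from $\gamma(v)\in L^\infty$ to $v\in L^\infty(\mathbb{R}^{N+1}_+)$. The paper instead observes that the very same test-function computation already controls the full $H^1$ norm $\|v_+^{1+\beta}\|$ (inequality \eqref{norm=r} before one passes to the trace), so once $|\gamma(v_+)^{1+\beta}|_p$ is uniformly bounded, plugging back gives $\|v_+^{1+\beta}\|^2\leq \tilde c\,C_\beta$, and the interior Sobolev embedding $\|v_+\|_{2^*(1+\beta)}^{1+\beta}=\|v_+^{1+\beta}\|_{2^*}\leq C_{2^*}\|v_+^{1+\beta}\|$ immediately yields a bound on $\|v_+\|_{2^*(1+\beta)}$ uniform in $\beta$, hence $v_+\in L^\infty(\mathbb{R}^{N+1}_+)$. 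No separate iteration on $v$ is needed.
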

\noindent\begin{proof} We recall equation \eqref{norm=r}:
\begin{align*}
\|vv^\beta_T\|^2\leq& C_\beta\left[-\int_{\mathbb{R}^{N}}V\gamma(vv_T^\beta)^2
+\int_{\mathbb{R}^{N}}\left(W*F(\gamma(v))\right)f(\gamma(v))\gamma(v)\gamma(v_T)^{2\beta}\right],
\end{align*}
where $C_\beta=\max\{m^{-2},(1+\beta^2)\}$.

It follows that  $W*F(\gamma(v))\in L^\infty(\mathbb{R}^{N})$, since $\gamma(v)\in L^p(\mathbb{R}^{N})$ for all $p\geq 2$, by Proposition \ref{p2}. We also know that $|f(t)|\leq C_1(|t|+|t|^{\theta-1})$ and $V$ is bounded. Therefore, if $C=\max\{|V|_\infty, C_1|W*F(\gamma)|_\infty\}$, we have
\begin{align*}
\|vv^\beta_T\|^2&\leq C_\beta C\left[\int_{\mathbb{R}^{N}}\gamma(vv_T^\beta)^ 2+\int_{\mathbb{R}^{N}}\left(|\gamma(v)|+|\gamma(v)|^{\theta-1}\right)\gamma(v)\gamma(v_T)^{2\beta}\right]\\
&\leq C_\beta\left[2 C\int_{\mathbb{R}^{N}}\gamma(vv_T^\beta)^ 2+ C\int_{\mathbb{R}^{N}}|\gamma(v)|^{\theta-2}\gamma(vv_T^{\beta})^2\right].
\end{align*}
Since $|\gamma(v)|^{p-2}=|\gamma(v)|^{p-2}\chi_{\{|\gamma(v)\leq 1\}}+|\gamma(v)|^{p-2}\chi_{\{|\gamma(v)> 1\}}$,
the fact that \[|\gamma(v)|^{p-2}\chi_{\{|\gamma(v)> 1\}}=:g_3\in L^{2N}(\mathbb{R}^{N})\]
allows us to conclude that
\begin{align*}2 C\gamma(vv_T^\beta)^ 2+ C|\gamma(v)|^{\theta-2}\gamma(vv_T^{\beta})^2\leq (C_3+g_3)\gamma(vv_T^\beta)^2
\end{align*}
for a positive constant $C_3$ and a positive function $g_3\in L^{2N}(\mathbb{R}^{N})$ that depends neither on $T$ nor on $\beta$.

Therefore,
\begin{align*}
\|vv^\beta_T\|^2&\leq \int_{\mathbb{R}^{N}}(C_3+g_3)\gamma(vv_T^\beta)^2.
\end{align*}
and
\begin{align*}
\|v^{\beta+1}_+\|^2&\leq C_\beta\int_{\mathbb{R}^{N}}(C_3+g_3)\gamma(v^{\beta+1}_+)^2.
\end{align*}
Since
\begin{align*}
\int_{\mathbb{R}^{N}}g_3\gamma(v^{\beta+1}_+)^2&\leq |g_3|_{2N}\,|\gamma(v_+)^{1+\beta}|_2\, |\gamma(v_+)^{1+\beta}|_{2^{\#}}\\
&\leq |g_3|_{2N}\left(\lambda|\gamma(v_+)^{1+\beta}|^2_2+\frac{1}{\lambda}|\gamma(v_+)^{1+\beta}|^2_{2^{\#}}\right),
\end{align*}
we conclude that
\begin{align*}
|\gamma(v_+)^{1+\beta}|^2_{2^{\#}}&\leq C^2_{2^{\#}} \|v^{\beta+1}_+\|^2\\
&\leq C^2_{2^{\#}}C_\beta\left(C_3+\lambda\,|g_3|_{2N}\right)|\gamma(v_+)^{1+\beta}|^2_2+\frac{C^2_{2^{\#}}C_\beta\,|g_3|_{2N}}{\lambda}|\gamma(v_+)^{1+\beta}|^2_{2^{\#}}
\end{align*}
and, by taking $\lambda>0$ so that
\[\frac{C^2_{2^{\#}}C_\beta\,|g_3|_{2N}}{\lambda}<\frac{1}{2},\]
we obtain
\begin{align}\label{fest}
|\gamma(v_+)^{1+\beta}|^2_{2^{\#}}&\leq C_\beta\left(2C^2_{2^{\#}}C_3+2C^2_{2^{\#}}\lambda\,|g_3|_{2N}\right)|\gamma(v_+)^{1+\beta}|^2_2\nonumber\\
&\leq C_4C_{\beta}|\gamma(v_+)^{1+\beta}|^2_2.
\end{align}
Since
\[C_4C_\beta\leq C_4(m^{-2}+1+\beta)\leq M^2e^{2\sqrt{1+\beta}}\]
for a positive constant $M$, it follows from \eqref{fest} that
\begin{align*}
|\gamma(v_+)|_{2^{\#}(1+\beta)}&\leq M^{1/(1+\beta)}e^{1/\sqrt{1+\beta}}|\gamma(v_+)|_{2(1+\beta)}.
\end{align*}
We now apply an iteration argument, taking $2(1+\beta_{n+1})=2^{\#}(1+\beta_n)$ and starting with $\beta_0=0$. This produces
\[|\gamma(v_+)|_{2^{\#}(1+\beta_n)}\leq M^{1/(1+\beta_n)}e^{1/\sqrt{1+\beta_n}}|\gamma(v_+)|_{2(1+\beta_n)}.\]
Because $(1+\beta_n)=\left(\frac{2^{\#}}{2}\right)^n=\left(\frac{N}{N-1}\right)^n$,
we have
\[\sum_{i=0}^\infty \frac{1}{1+\beta_n}<\infty\qquad\textrm{and}\qquad \sum_{i=0}^\infty\frac{1}{\sqrt{1+\beta_n}}<\infty.\]

Thus,
\[|\gamma(v_+)|_\infty=\lim_{n\to\infty}|\gamma(v_+)|_{2^{\#}(1+\beta_n)}<\infty,\]
from what follows $|\gamma(v_+)|_p<\infty$ for all $p\in [2,\infty]$. The same argument applies to $\gamma(v_-)$, proving that $\gamma(v)\in L^p(\mathbb{R}^{N})$ for all $p\in [2,\infty]$.

By taking $\lambda=1$ and $|\gamma(v_+)^{1+\beta}|_{p}<C_5$ for all $p$, we obtain for any $\beta>0$,
\begin{align}\label{final0}
\|v^{\beta+1}_+\|^2
&\leq C_\beta\left(C_3+|g_3|_{2N}\right)C^{2}_5+C_\beta\,|g_3|_{2N}C^{2}_5.
\end{align}

But $\|v_+\|^{1+\beta}_{2^*(1+\beta)}=\|v_+^{1+\beta}\|_{2^*}\leq C_{2^*}\|v_+^{1+\beta}\|$ and for a positive constant $\tilde{c}$ results from \eqref{final0}  that
\[\|v_+\|^{2(1+\beta)}_{2^*(1+\beta)}\leq \tilde{c}C_\beta C^{2(1+\beta)}_5.\]
Thus,
\[\|v_+\|_{2^*(1+\beta)}\leq \tilde{c}^{1/2(1+\beta)}C_\beta^{1/2(1+\beta)}C_5 \]
and the right-hand side of the last inequality is uniformly bounded for all $\beta>0$. We are done.
$\hfill\Box$\end{proof}

We now state \cite[Proposition 3.9]{ZelatiNolasco}:
\begin{proposition}\label{regZN}Suppose that $v\in H^1(\mathbb{R}^{N+1}_+)\cap L^\infty(\mathbb{R}^{N+1}_+)$ is a weak solution of
\begin{equation}\label{C}\left\{\begin{aligned}
-\Delta v +m^2v&=0, &&\mbox{in} \ \mathbb{R}^{N+1}_+,\\
-\displaystyle\frac{\partial v}{\partial x}(0,y)&=h(y) &&\mbox{for all} \ y\in\mathbb{R}^{N},\end{aligned}\right.\end{equation}
where $h\in L^p(\mathbb{R}^{N})$ for all $p\in [2,\infty]$.

Then $v\in C^{\alpha}([0,\infty)\times\mathbb{R}^{N})\cap W^{1,q}((0,R)\times\mathbb{R}^{N})$ for all $q\in [2,\infty)$ and $R>0$.

In addition, if $h\in C^\alpha(\mathbb{R}^N)$, then $v\in C^{1,\alpha}([0,\infty)\times\mathbb{R}^N)\cap C^2(\mathbb{R}^{N+1}_+)$ is a classical solution of \textup{\eqref{C}}.
\end{proposition}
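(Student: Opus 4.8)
\emph{Proof (sketch).} I would dispatch the interior regularity first: since $v$ solves $-\Delta v+m^2v=0$ in the open half-space with constant coefficients and elliptic principal part, classical interior elliptic regularity (an $L^q$/Schauder bootstrap, or hypoellipticity) gives $v\in C^\infty(\mathbb{R}^{N+1}_+)$, in particular $v\in C^2(\mathbb{R}^{N+1}_+)$. Everything then reduces to regularity up to the flat boundary $\{x=0\}$.

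For the boundary part the plan is to exploit that a bounded finite-energy solution of $-\Delta v+m^2v=0$ on $\mathbb{R}^{N+1}_+$ is uniquely determined by its trace $u:=\gamma(v)\in H^{1/2}(\mathbb{R}^N)$ and coincides with the extension $v(x,\cdot)=e^{-x\sqrt{-\Delta+m^2}}u$, whose conormal derivative at the boundary is the Dirichlet-to-Neumann map $-\partial_x v(0,\cdot)=\sqrt{-\Delta+m^2}\,u$. Thus the hypothesis $h=-\partial_x v(0,\cdot)\in L^p(\mathbb{R}^N)$ for every $p\in[2,\infty]$ says precisely that $u=(-\Delta+m^2)^{-1/2}h$. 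Since $m>0$, the Fourier multiplier $(|\xi|^2+m^2)^{-1/2}$ differs from the Bessel multiplier $(|\xi|^2+1)^{-1/2}$ by a Mikhlin--H\"ormander multiplier, so $(-\Delta+m^2)^{-1/2}$ maps $L^p(\mathbb{R}^N)$ into $W^{1,p}(\mathbb{R}^N)$ for all $p\in(1,\infty)$; I would conclude $u\in W^{1,p}(\mathbb{R}^N)$ for every $p\in[2,\infty)$ and, choosing $p>N$ and using Morrey's embedding, $u\in C^{0,\alpha}(\mathbb{R}^N)$ for every $\alpha\in(0,1)$.

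With $u\in C^{0,\alpha}\cap L^\infty$ and $h\in L^\infty$ available, I would read off the remaining regularity from the extension formula (equivalently, from localized boundary estimates for the Dirichlet problem). The Poisson-type kernel $K_x$ of $e^{-x\sqrt{-\Delta+m^2}}$ is nonnegative with $\int_{\mathbb{R}^N}K_x=e^{-mx}\le 1$, so $[v(x,\cdot)]_{C^\alpha(\mathbb{R}^N)}\le[u]_{C^\alpha(\mathbb{R}^N)}$ uniformly in $x\ge0$, while $\partial_x v(x,\cdot)=-e^{-x\sqrt{-\Delta+m^2}}h$ has sup-norm $\le|h|_\infty$, so $v$ is Lipschitz in $x$ uniformly in $y$; combining the two gives $v\in C^{0,\alpha}([0,\infty)\times\mathbb{R}^N)$. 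For the Sobolev statement, the same contractivity of $e^{-x\sqrt{-\Delta+m^2}}$ on $L^q(\mathbb{R}^N)$ yields $\|\partial_{y_i}v(x,\cdot)\|_{L^q}\le\|\partial_{y_i}u\|_{L^q}$, $\|\partial_x v(x,\cdot)\|_{L^q}\le|h|_q$ and $\|v(x,\cdot)\|_{L^q}\le|u|_q$ for every $q\in[2,\infty)$; integrating in $x$ over $(0,R)$ gives $v\in W^{1,q}((0,R)\times\mathbb{R}^N)$. Finally, if moreover $h\in C^\alpha(\mathbb{R}^N)$, the order-one smoothing of $(-\Delta+m^2)^{-1/2}$ on H\"older--Zygmund scales gives $u\in C^{1,\alpha}(\mathbb{R}^N)$, hence $\nabla_y u\in C^\alpha$; since $e^{-x\sqrt{-\Delta+m^2}}$ also preserves $C^\alpha(\mathbb{R}^N)$ uniformly in $x$, both $\partial_{y_i}v=e^{-x\sqrt{-\Delta+m^2}}\partial_{y_i}u$ and $\partial_x v=-e^{-x\sqrt{-\Delta+m^2}}h$ extend continuously in $C^\alpha$ up to $\{x=0\}$, so $v\in C^{1,\alpha}([0,\infty)\times\mathbb{R}^N)$; together with the interior $C^2$ regularity, $v$ is a classical solution of \eqref{C}.

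The hard part is precisely this last passage: turning information about the Neumann datum $h=-\partial_x v(0,\cdot)$ into regularity of $v$ near $\{x=0\}$. I handle it above by identifying $\gamma(v)$ with $(-\Delta+m^2)^{-1/2}h$ and invoking the sharp $L^p$ and H\"older mapping properties of that potential operator; an equivalent, more hands-on route is to freeze a boundary point, localize on a half-ball, and run Calder\'on--Zygmund and Schauder estimates for the Neumann problem uniformly in the tangential variable (the unboundedness of $\mathbb{R}^N$ causing no difficulty since all constants are translation invariant), which is the path followed in \cite{Cabre} and \cite{ZelatiNolasco}. $\hfill\Box$
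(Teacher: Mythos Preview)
Your argument is correct but follows a genuinely different route from the one the paper records. The paper does not prove this proposition in full; it quotes it from \cite{ZelatiNolasco} and, in the proof of Theorem~\ref{classical}, recalls the key device: set $\rho(x,y)=\int_0^x v(t,y)\,\dd t$, take the odd extension of $\rho$ and of $h$ across $\{x=0\}$, and observe that the extended $\rho$ solves $-\Delta\rho+m^2\rho=h$ on the whole space $\mathbb{R}^{N+1}$. Standard interior $L^p$ elliptic regularity then gives $\rho\in W^{2,p}(\mathbb{R}^{N+1})$ for all $p<\infty$, hence $\rho\in C^{1,\alpha}$ by Morrey, so $v=\partial_x\rho\in C^\alpha$ up to $\{x=0\}$ and $v\in W^{1,q}$ on strips; if moreover $h\in C^\alpha$, Schauder for the full-space equation yields $\rho\in C^{2,\alpha}$ and thus $v\in C^{1,\alpha}$.

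Your approach instead identifies $v(x,\cdot)=e^{-x\sqrt{-\Delta+m^2}}u$ with $u=\gamma(v)$, inverts the Dirichlet--to--Neumann map to write $u=(-\Delta+m^2)^{-1/2}h$, and then reads everything off from Bessel-potential mapping properties on $L^p$ and on H\"older--Zygmund scales together with the $L^q$- and $C^\alpha$-contractivity of the Poisson-type semigroup. This is a legitimate and conceptually clean alternative. The trade-off is that the paper's integrate-and-reflect trick is more elementary---it removes the boundary entirely and needs only textbook interior estimates---whereas your route leans on Mikhlin multipliers and the H\"older calculus for order-one Bessel potentials. One point in your sketch that deserves an extra line: for $v\in C^{1,\alpha}$ up to $\{x=0\}$ you verified that $\partial_x v=-e^{-xA}h$ and $\partial_{y_i}v=e^{-xA}\partial_{y_i}u$ are $C^\alpha$ in $y$ uniformly in $x$, but you also need $C^\alpha$ regularity in $x$ of these first derivatives near $x=0$; this follows from analytic-semigroup estimates of the form $\|(e^{-x_1A}-e^{-x_2A})g\|_\infty\le C|x_1-x_2|^\alpha$ for $g\in C^\alpha$, or, more cheaply, from the paper's $\rho$-trick itself.
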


\noindent{\textit{Proof of Theorem \ref{classical}.} In the proof of Proposition \ref{regZN} (see \cite[Proposition 3.9]{ZelatiNolasco}), defining
	\[\rho(x,y)=\int_0^x v(t,y)\dd t,\]
	taking the odd extension of $h$ and $\rho$ to the whole $\mathbb{R}^{N+1}$ (which we still denote simply by $h$ and $\rho$), in \cite{ZelatiNolasco} is obtained that $\rho$ satisfies the equation
	\begin{equation}\label{rho}-\Delta\rho+m^2\rho=h\quad\text{in }\ \mathbb{R}^{N+1}\end{equation}
	and $\rho\in C^{1,\alpha}(\mathbb{R}^{N+1})$ for all $\alpha\in(0,1)$ by applying Sobolev's embedding. Therefore, $v(x,y)=\frac{\partial \rho}{\partial x}(x,y)\in C^\alpha(\mathbb{R}^{N})$.
	
In our case
\[h(y)=-V(y)v(0,y)+\left(W*F\left(v(0,y)\right)\right)f\left(v(0,y)\right).\]

We now rewrite equation \eqref{rho} as
\[-\Delta \rho+V(y)\frac{\partial \rho}{\partial x}(0,y)+m^2\rho=\left(W*F\left(\frac{\partial \rho}{\partial x}(0,y)\right)\right)f\left(\frac{\partial \rho}{\partial x}(0,y)\right).\]
Since $f\in C^1$ and $\frac{\partial \rho}{\partial x}(x,y)$ is bounded, the right-hand side of the last equality belongs to $C^\alpha(\mathbb{R}^{N+1})$. Thus, classical elliptic boundary regularity yields
 \[\rho\in C^{2}(\mathbb{R}^{N+1})\quad\Rightarrow\quad v\in C^{1,\alpha}(\mathbb{R}^N_+).\]
Hence, by applying classical interior elliptic regularity directly to $v$, we deduce that $v\in C^{1,\alpha}(\mathbb{R}^N_+)\cap C^{2}(\mathbb{R}^N_+)$ is a classical solution of problem \eqref{P}. $\hfill\Box$

\section{Proof of Theorem \ref{t3}}
We now adjust \cite[Theorem 3.14]{ZelatiNolasco} to our needs. The original statement guarantees that $v\in C^\infty([0,\infty)\times\mathbb{R}^{N})$, a result that depends on the function $h$ (of Proposition \ref{regZN}) considered in that paper. For the convenience of the reader, we present the proof of the next result:
\begin{theorem}\label{314}Suppose that $v\in H^1(\mathbb{R}^{N+1}_+)$ is a critical point of the energy functional $I$, then 
	\[|v(x,y)|e^{\lambda x}\to 0\]
	as $x+|y|\to \infty$, for any $\lambda<m$.
\end{theorem}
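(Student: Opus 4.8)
The plan is to exploit the classical fact that the extension operator $-\Delta + m^2$ on $\mathbb{R}^{N+1}_+$ admits a fundamental-solution (Bessel-potential) representation with exponentially decaying kernel, and to compare $v$ against a suitable barrier. First I would note that by Theorem~\ref{classical} (via Proposition~\ref{t2} and Proposition~\ref{regZN}) we already know $v\in C^{1,\alpha}\cap C^2$ and, crucially, that $v$ and $\gamma(v)$ are bounded on $\mathbb{R}^{N+1}_+$ and $\mathbb{R}^N$ respectively, with $\gamma(v)\in L^p$ for all $p\in[2,\infty]$. The boundary datum
\[h(y)=-V(y)\gamma(v)(y)+\bigl(W*F(\gamma(v))\bigr)(y)\,f(\gamma(v))(y)\]
is therefore bounded; moreover, using $(f_1)$ in the form $|f(t)|\le \xi t + C_\xi t^{\theta-1}$ together with Corollary~\ref{cor} (so $|W*F(\gamma(v))|\le C+g$ with $g\in L^{2N/[N(2-\theta)+\theta]}$) and the decay $\gamma(v)(y)\to 0$ as $|y|\to\infty$ (which itself follows from $\gamma(v)\in L^p\cap C^\alpha$), one gets $|h(y)|=o(|\gamma(v)(y)|)$, i.e. $h$ vanishes at infinity faster than $\gamma(v)$. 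This smallness is what will let a barrier argument close.

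The key step is a comparison/maximum-principle argument. I would fix $\lambda<m$ and set $\mu\in(\lambda,m)$, and introduce the comparison function built from the kernel of $-\Delta+\mu^2$ — e.g. $\Phi_\mu(x,y)=e^{-\mu\sqrt{x^2+|y|^2}}$, which satisfies $-\Delta\Phi_\mu+\mu^2\Phi_\mu\le 0$ away from the origin (the sign being favorable because $\mu^2<$ the eigenvalue associated to pure radial exponential decay in $\mathbb{R}^{N+1}$, up to the lower-order curvature term which has the right sign for large radius). Since $-\Delta v+m^2v=0$ and $m>\mu$, we have $-\Delta v+\mu^2 v=(\mu^2-m^2)v\le 0$ wherever $v\ge 0$; combined with the Robin boundary condition $-\partial_x v(0,y)=h(y)$ with $h$ small at infinity, one shows that for a large constant $C$ and large radius $R_0$, the function $w=C\Phi_\mu$ is a supersolution dominating $v$ on $\{x^2+|y|^2=R_0^2\}\cap\mathbb{R}^{N+1}_+$ and on the boundary portion $\{x=0,|y|\ge R_0\}$ (here one absorbs the boundary term into the $\mu^2-m^2<0$ slack, using $|h|\le \epsilon|\gamma(v)|$). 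The maximum principle for $-\Delta+\mu^2$ on the exterior region $\{x^2+|y|^2>R_0^2\}\cap\mathbb{R}^{N+1}_+$, with the Robin condition handled on the flat part, then yields $|v(x,y)|\le C e^{-\mu\sqrt{x^2+|y|^2}}$ for $x^2+|y|^2\ge R_0^2$. Multiplying by $e^{\lambda x}\le e^{\lambda\sqrt{x^2+|y|^2}}$ and using $\mu>\lambda$ gives $|v(x,y)|e^{\lambda x}\le Ce^{-(\mu-\lambda)\sqrt{x^2+|y|^2}}\to 0$ as $x+|y|\to\infty$.

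The main obstacle I anticipate is making the maximum principle on the unbounded exterior domain with a Robin (Neumann-type) condition on the flat boundary fully rigorous: one must ensure $v$ itself does not grow at infinity (true, since $v\in L^2\cap L^\infty$ and is continuous, but one needs $v\to 0$ at infinity, not merely boundedness — this follows from $v\in H^1\cap C^{1,\alpha}$ by a standard argument, e.g. $\|v\|_{L^\infty(B_1((x,y)))}\to 0$ using interior estimates and $v\in L^2$), and one must check that the barrier $\Phi_\mu$ is a genuine supersolution up to the boundary, i.e. that $-\partial_x\Phi_\mu(0,y)\ge 0$, which holds since $\partial_x\Phi_\mu(0,y)=0$, so the Robin inequality $-\partial_x v\le -\partial_x w$ at the boundary reduces to controlling $h$, which we arranged. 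A secondary technical point is the precise choice of $\mu$ relative to the lower-order term in $-\Delta\Phi_\mu+\mu^2\Phi_\mu = \mu^2\Phi_\mu - \mu^2\Phi_\mu + \frac{\mu N}{\sqrt{x^2+|y|^2}}\Phi_\mu \ge 0$ — in fact $-\Delta\Phi_\mu = (\mu^2 - \mu N/r)\Phi_\mu$ with $r=\sqrt{x^2+|y|^2}$, so $-\Delta\Phi_\mu+\mu^2\Phi_\mu=(2\mu^2-\mu N/r)\Phi_\mu$, which has the wrong sign for small $r$ but, more importantly, I only need $-\Delta(C\Phi_\mu)+\mu^2(C\Phi_\mu)\ge 0\ge -\Delta v+\mu^2 v$ in the exterior region, and for $r\ge R_0$ with $R_0$ large the term $2\mu^2-\mu N/r>0$, so the inequality holds; alternatively one replaces $\Phi_\mu$ by $e^{-\mu x}\psi(y)$ or a product barrier to simplify the computation. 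Once the supersolution property and the comparison on the (compact) inner sphere and the boundary are in hand, the conclusion is immediate.
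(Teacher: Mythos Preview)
Your approach is quite different from the paper's and, in fact, essentially reproves the stronger Theorem~\ref{t3} rather than Theorem~\ref{314}. The paper's proof of Theorem~\ref{314} is much more direct: taking the Fourier transform in the tangential variable~$y$ of the equation $-\Delta v+m^2v=0$ with Dirichlet data $v(0,\cdot)=v_0\in L^2(\mathbb{R}^N)$ gives the explicit formula $\mathcal{F}v(x,k)=e^{-\sqrt{|2\pi k|^2+m^2}\,x}\,\mathcal{F}v_0(k)$, from which $\sup_y|v(x,y)|\le C|v_0|_2e^{-mx}$ follows immediately; decay as $|y|\to\infty$ for each fixed $x$ then comes for free from the $W^{1,q}$ regularity of Proposition~\ref{regZN}. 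No barrier, no maximum principle, no boundary analysis is needed. Your route via a supersolution $\Phi_\mu=e^{-\mu r}$ and the Hopf lemma on an exterior half-ball is exactly the machinery the paper reserves for Theorem~\ref{t3}, where Theorem~\ref{314} itself is invoked to guarantee the comparison function tends to zero at infinity; you circumvent that circularity by extracting $v\to 0$ directly from $v\in L^2\cap C^{\alpha}$, which is legitimate.

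That said, your writeup contains a concrete error: the claim that $|h(y)|=o(|\gamma(v)(y)|)$ is false, because $h(y)=-V(y)\gamma(v)(y)+K(y)f(\gamma(v)(y))$ and the first term is of exact order $|\gamma(v)|$ (only the convolution term is $o(|\gamma(v)|)$, via $(f_1)$). This matters for your boundary step. With the purely radial barrier $\Phi_\mu$ one has $\partial_x\Phi_\mu(0,y)=0$, so at a putative negative boundary minimum $(0,y_0)$ of $C\Phi_\mu-v$ Hopf gives $h(y_0)>0$; the contradiction must come precisely from the term you dropped, namely $-V(y_0)\gamma(v)(y_0)$, which is negative for $|y_0|$ large because $V(y)\to V_\infty>0$ and dominates $K(y_0)f(\gamma(v)(y_0))=o(\gamma(v)(y_0))$. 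So the argument can be repaired, but not with the $o(|\gamma(v)|)$ claim as you stated it, and your description of the boundary step (``the Robin inequality $-\partial_x v\le -\partial_x w$ reduces to controlling $h$'') is too vague to see that you are using the sign of $V$ rather than smallness of $h$. In short: your method is correct in spirit but overkill for this statement, and the key inequality you wrote is wrong as stated; the paper's Fourier argument is both simpler and avoids the boundary analysis entirely.
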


\noindent \begin{proof} Let us consider a solution $v$ of the problem
\[\left\{\begin{array}{ll}
-\Delta v+m^2v=0 &\text{in}\ \mathbb{R}^{N+1}_+\\
v(0,y)=v_0(y)\in L^2(\mathbb{R}^{N}), &y\in \mathbb{R}^{N}=\partial \mathbb{R}^{N+1}_+.
\end{array}\right.\]

By applying the Fourier transform with respect to variable $y\in\mathbb{R}^{N}$ we obtain
\[\mathcal{F}v(x,k)=e^{-\sqrt{|2\pi k|^2+m^2}\,x}\mathcal{F}v_0(y),\]
from what follows
\[\sup_{y\in\mathbb{R}^{N}}|v(x,y)|\leq C|v_0|_2e^{-mx}.\]
Since Proposition \ref{regZN} shows that $v\in W^{1,q}((0,R)\times\mathbb{R}^{N})$ for all $q\in [2,\infty)$ and $R>0$, we conclude that $|v(x,y)|\to 0$ when $|y|\to\infty$ for any $x$ and $|v(x,y)|e^{\lambda x}\to 0$ as $x+|y|\to \infty$ for any $\lambda<m$.
$\hfill\Box$\end{proof}\hspace*{.2cm}

We now adapt the proof of \cite[Theorem 5.1]{ZelatiNolasco}. In that paper is assumed that $W(y)\to 0$ as $|y|\to\infty$, a condition that is not necessary.\\

\noindent\textit{Proof of Theorem \ref{t3}}. We denote
\[K(y)=W*F\left(\frac{\partial w}{\partial x}(0,y)\right).\]
It follows easily that $K$ is bounded.

By Theorem \ref{t1} we have $w(x,y)\geq 0$. Applying Harnack's inequality we conclude that $w$ is strictly positive.

Following \cite{ZelatiNolasco}, for any $R>0$ we denote
\begin{align*}
B^+_R&=\{(x,y)\in \mathbb{R}^{N+1}_+\,:\, \sqrt{x^2+|y|^2}<R\}\\
\Omega^+_R&=\{(x,y)\in \mathbb{R}^{N+1}_+\,:\, \sqrt{x^2+|y|^2}>R\}\\
\Gamma^+_R&=\{(0,y)\in \mathbb{R}^{N+1}_+\,:\, |y|>R\}
\end{align*}
and define
\[f_R(x,y)=C_Re^{-\alpha x}e^{-(m-\alpha)\sqrt{x^2+|y|^2}},\]
where the positive constants $C_R$ and $\alpha\in (V_0,m)$ will be chosen later on. A simple computation shows that
\[\Delta f_R=\left(\alpha^2+(m-\alpha)^2+\frac{2\alpha(m-\alpha)x}{\sqrt{x^2+|y|^2}}-\frac{N(m-\alpha)}{\sqrt{x^2+|y|^2}}\right)f_R.\]
Thus, for $R$ large enough, we have
\[\left\{\begin{array}{ll}
-\Delta f_R+m^2f_R\geq 0 &\text{in }\ \Omega^+_R\\
-\frac{\partial f_R}{\partial x}=\frac{\partial f_R}{\partial \eta}=\alpha f_R &\text{on }\ \Gamma^+_R.\end{array}  \right.\]

We now define
\[\rho(x,y)=f_R(x,y)-w(x,y).\]
We clearly have $-\Delta\rho(x,y)-m^2\rho(x,y)\geq 0$ in $\Omega^+_R$. Choosing \[C_R=e^{mR}\max_{\partial B^+_R}v,\] we also have $\rho(x,y)\geq 0$ on $\partial B^+_R$ and $\rho(x,y)\to 0$ when $x+|y|\to\infty$.

We claim that $\rho(x,y)\geq 0$ in $\overline{\Omega}^+_R$. Supposing the contrary, let us assume that $\inf_{\overline{\Omega}^+_R} \rho(x,y)<0$. By the strong maximum principle, there exist $(0,y_0)\in \Gamma^+_R$ such that $\rho(0,y_0)=\inf_{\overline{\Omega}^+_R} \rho(x,y)<\rho(x,y)$ for all $(x,y)\in \Omega^+_R$. Defining
\[z(x,y)=\rho(x,y)e^{\lambda x}\]
for some $\lambda\in (V_0,m)$, a straightforward calculation shows that
\[-\Delta \rho+m^2\rho=e^{-\lambda x}\left(\-\Delta z+2\lambda \partial_xz+(m^2-\lambda^2)z\right).\]
Since $-\Delta \rho+m^2\rho\geq 0$, we conclude that $\-\Delta z+2\lambda \partial_xz+(m^2-\lambda^2)z\geq0$.

Another application of the strong maximum principle yields
\[z(0,y_0)=\inf_{\Gamma^+_R}z=\inf_{\Gamma^+_R}\rho=\rho(0,y_0)<0.\]

An application of Hopf's lemma produces $\frac{\partial z}{\partial \eta}(0,y_0)<0$, that is, \begin{equation}\label{z}-\frac{\partial z}{\partial x}(0,y_0)<0.\end{equation} Since $\frac{\partial z}{\partial x}=\frac{\partial \rho}{\partial x}e^{\lambda x}+\lambda \rho e^{\lambda x}$, we conclude that
\[\frac{\partial z}{\partial x}(0,y_0)=\frac{\partial \rho}{\partial x}(0,y_0)+\lambda \rho(0,y_0)\]
and so
\begin{align*}-\frac{\partial z}{\partial x}(0,y_0)
&=-\frac{\partial f_R}{\partial x}(0,y_0)+\frac{\partial w}{\partial x}(0,y_0)-\lambda f_R(0,y_0)+\lambda w(0,y_0)\\
&=(\alpha-\lambda)f_R(0,y_0)+V(y_0)w(0,y_0)-K(y_0)f(w(0,y_0))\\&\qquad+\lambda w(0,y_0)\\
&=(\alpha-\lambda)f_R(0,y_0)+(V(y_0)+V_0)w(0,y_0)-K(y_0)f(w(0,y_0))\\
&\qquad+(\lambda-V_0)w(0,y_0).
\end{align*}
Now, choosing $\alpha=\lambda$, since $\lambda>V_0$ (so that the last term in the above inequality is non-negative), the positiveness of $(V(y_0)+V_0)w(0,y_0)$ and hypothesis $(f_1)$ guarantees that $-\frac{\partial z}{\partial x}(0,y_0)>0$, thus reaching a contradiction with \eqref{z}.
$\hfill\Box$

\end{document}